\numberwithin{equation}{section}
\newcommand{\xrightarrowdbl}[2][]{%
  \xrightarrow[#1]{#2}\mathrel{\mkern-14mu}\rightarrow
}
\newcommand{\ortriangle}[2]{
\begin{tikzpicture}[scale=#1,baseline=#2]
\draw[fill,black] (0.5,1) circle (0.5mm);
\draw[fill,black] (-0.1,0) circle (0.5mm);
\draw[fill,black] (1.1,0) circle (0.5mm);
\draw[shorten <= 1mm, shorten >= 1mm, ->] (0.5,1) -- (-0.1,0);
\draw[shorten <= 1mm, shorten >= 1mm, ->] (0.5,1) -- (1.1,0);
\draw[shorten <= 1mm, shorten >= 1mm, ->] (-0.1,0) -- (1.1,0);
\end{tikzpicture}
}
\newcommand{\veps}{\ensuremath{\varepsilon}}
\newcommand{\orsquare}[2]{
\begin{tikzpicture}[scale=#1,baseline=#2]
\draw[fill,black] (0,1) circle (0.5mm);
\draw[fill,black] (1,1) circle (0.5mm);
\draw[fill,black] (0,0) circle (0.5mm);
\draw[fill,black] (1,0) circle (0.5mm);
\draw[shorten <= 1mm, shorten >= 1mm, ->] (0,1) -- (0,0);
\draw[shorten <= 1mm, shorten >= 1mm, ->] (0,0) -- (1,0);
\draw[shorten <= 1mm, shorten >= 1mm, ->] (1,1) -- (1,0);
\draw[shorten <= 1mm, shorten >= 1mm, ->] (0,1) -- (1,1);
\draw[shorten <= 1mm, shorten >= 1mm, ->] (0,1) -- (1,0);
\end{tikzpicture}
}
\tikzset{Lshift/.style={to path={%
    ($($(\tikztostart)!0.9mm!+90:(\tikztotarget)$)!0.2!($(\tikztotarget)!0.9mm!-90:(\tikztostart)$)$) --
    ($($(\tikztostart)!0.9mm!+90:(\tikztotarget)$)!0.8!($(\tikztotarget)!0.9mm!-90:(\tikztostart)$)$) \tikztonodes
}}}
\tikzset{Rshift/.style={to path={%
    ($($(\tikztostart)!-0.9mm!+90:(\tikztotarget)$)!0.2!($(\tikztotarget)!-0.9mm!-90:(\tikztostart)$)$) --
    ($($(\tikztostart)!-0.9mm!+90:(\tikztotarget)$)!0.8!($(\tikztotarget)!-0.9mm!-90:(\tikztostart)$)$) \tikztonodes
}}}
\tikzset{Lshifttight/.style={to path={%
    ($($(\tikztostart)!0.9mm!+90:(\tikztotarget)$)!0.1!($(\tikztotarget)!0.9mm!-90:(\tikztostart)$)$) --
    ($($(\tikztostart)!0.9mm!+90:(\tikztotarget)$)!0.9!($(\tikztotarget)!0.9mm!-90:(\tikztostart)$)$) \tikztonodes
}}}
\tikzset{Rshifttight/.style={to path={%
    ($($(\tikztostart)!-0.9mm!+90:(\tikztotarget)$)!0.1!($(\tikztotarget)!-0.9mm!-90:(\tikztostart)$)$) --
    ($($(\tikztostart)!-0.9mm!+90:(\tikztotarget)$)!0.9!($(\tikztotarget)!-0.9mm!-90:(\tikztostart)$)$) \tikztonodes
}}}
\declaretheorem[style=plain,numberwithin=section]{proposition}
\declaretheorem[style=plain,sibling=proposition]{definition}
\declaretheorem[style=plain,numbered=no,name=Definition]{definition*}
\declaretheorem[style=definition,qed=$\lozenge$,sibling=proposition]{example}
\declaretheorem[style=definition,qed=$\lozenge$,sibling=proposition]{notation}
\declaretheorem[style=plain,sibling=proposition]{lemma}
\declaretheorem[style=plain,numbered=no,name=Proposition]{proposition*}
\declaretheorem[style=plain,sibling=proposition]{theorem}
\declaretheorem[style=plain,name=Theorem]{maintheorem}
\declaretheorem[style=plain,numbered=no,name=Theorem]{theorem*}
\declaretheorem[style=plain,numbered=no,name=Resultat]{resultat*}
\declaretheorem[style=plain,numbered=no,name=Result]{result*}
\declaretheorem[style=plain,numbered=no,name=Observation]{observation*}
\declaretheorem[style=plain,numbered=no,name=Question]{question*}
\declaretheorem[style=plain,numbered=no,name=Acknowledgements]{acknowledgements*}
\declaretheorem[style=plain,numbered=no,name=Beobachtung]{beobachtung*}
\declaretheorem[style=plain,sibling=proposition]{corollary}
\declaretheorem[style=plain,sibling=proposition]{fact}
\newcommand{\locarrow}[1]{\xrightarrow{#1}}
\DeclareMathOperator{\Loc}{Loc}
\DeclareMathOperator{\comp}{real}
\DeclareMathOperator{\flatmod}{flat}
\DeclareMathOperator{\lift}{H}
\DeclareMathOperator{\dg}{dg-}
\DeclareMathOperator{\dw}{dw-}
\DeclareMathOperator{\cosyz}{Q}
\DeclareMathOperator{\syz}{Z}
\DeclareMathOperator{\bfR}{R}
\DeclareMathOperator{\bfL}{L}
\DeclareMathOperator{\bfD}{D}
\DeclareMathOperator{\bfK}{K}
\DeclareMathOperator{\Ch}{Ch}
\DeclareMathOperator{\Ho}{Ho}
\DeclareMathOperator{\adjL}{L}
\DeclareMathOperator{\adjR}{R}
\DeclareMathOperator{\ac}{ac}
\DeclareMathOperator{\Acyc}{Acyc}
\DeclareMathOperator{\ProjInj}{Proj-Inj}
\DeclareMathOperator{\co}{co}
\DeclareMathOperator{\sing}{sing}
\DeclareMathOperator{\ctr}{ctr}
\DeclareMathOperator{\opproj}{proj}
\DeclareMathOperator{\gproj}{G-Proj}
\DeclareMathOperator{\ginj}{G-inj}
\DeclareMathOperator{\opinj}{inj}
\DeclareMathOperator{\Inj}{Inj}
\DeclareMathOperator{\id}{id}
\DeclareMathOperator{\Proj}{Proj}
\DeclareMathOperator{\Mod}{-Mod}
\DeclareMathOperator{\Hom}{Hom}
\DeclareMathOperator{\coker}{coker}
\DeclareMathOperator{\image}{im}
\DeclareMathOperator{\filt}{filt-}
\DeclareMathOperator{\GProj}{G-Proj}
\DeclareMathOperator{\ext}{Ext}
\newcommand   {\lac} {\ensuremath{_{\ac}}}
\newcommand   {\uco} {\ensuremath{^{\co}}}
\newcommand {\lsing} {\ensuremath{_{\sing}}}
\newcommand {\uGProj}{\ensuremath{\underline{\GProj}}}
\newcommand  {\uctr} {\ensuremath{^{\ctr}}}
\newcommand {\uproj} {\ensuremath{^{\opproj}}}
\newcommand{\ugproj} {\ensuremath{^{\gproj}}}
\newcommand {\uginj} {\ensuremath{^{\ginj}}}
\newcommand  {\uinj} {\ensuremath{^{\opinj}}}
\newcommand   {\inj} {\ensuremath{\calI}}
\newcommand  {\proj} {\ensuremath{\calP}}
\newcommand {\wt}[1]{\ensuremath{\widetilde{#1}}}
\newcommand    {\ZZ} {\ensuremath{{\mathbb Z}}}
\newcommand  {\calO} {\ensuremath{{\mathcal O}}}
\newcommand  {\calM} {\ensuremath{{\mathcal M}}}
\newcommand  {\calP} {\ensuremath{{\mathcal P}}}
\newcommand  {\calI} {\ensuremath{{\mathcal I}}}
\newcommand  {\calC} {\ensuremath{{\mathcal C}}}
\newcommand  {\calD} {\ensuremath{{\mathcal D}}}
\newcommand  {\calQ} {\ensuremath{{\mathcal C}}}
\newcommand  {\calR} {\ensuremath{{\mathcal D}}}
\newcommand  {\calE} {\ensuremath{{\mathcal E}}}
\newcommand  {\calF} {\ensuremath{{\mathcal F}}}
\newcommand  {\calS} {\ensuremath{{\mathcal S}}}
\newcommand  {\calT} {\ensuremath{{\mathcal T}}}
\newcommand  {\calW} {\ensuremath{{\mathcal W}}}
\newcommand {\scA} {\ensuremath{{\mathscr A}}}
\newcommand {\scF} {\ensuremath{{\mathscr F}}}
\newcommand   {\p}    {\ensuremath{^{\prime}}}
\newcommand  {\pp}    {\ensuremath{^{\prime\prime}}}
\newcommand  {\ppp}    {\ensuremath{^{\prime\prime\prime}}}
\newcommand  {\ua}    {\ensuremath{^{\ast}}}
\newcommand  {\us}    {\ensuremath{^{\sharp}}}
\newcommand  {\ul} [1]{\ensuremath{\underline{#1}}}
\newcommand{\rec}{\ensuremath{
    \begin{tikzpicture}[baseline]
      \useasboundingbox (0,0) rectangle (4.75mm,2mm);
      \draw[->](0.5mm,1.1mm)--(4.25mm,1.1mm);
      \draw[->](4.25mm,0.2mm)--(0.5mm,0.2mm);
      \draw[->](4.25mm,2mm)--(0.5mm,2mm);
    \end{tikzpicture}
  }
}
\newcommand{\bigrec}{\ensuremath{
    \begin{tikzpicture}[baseline]
      \useasboundingbox (0,0) rectangle (9.75mm,2.3mm);
      \draw[->](0.5mm,1.2mm)--(9.25mm,1.2mm);
      \draw[->](9.25mm,0.1mm)--(0.5mm,0.1mm);
      \draw[->](9.25mm,2.3mm)--(0.5mm,2.3mm);
    \end{tikzpicture}
  }
}
\title{\mbox{A realization functor for abelian model categories}}
\author{Hanno Becker}
\email{habecker@math.uni-bonn.de}
\date{\today}
\begin{document}

\begin{abstract}
We study liftings of abelian model structures to categories of chain complexes and construct a realization functor
$\bfD(\scA)\to\Ho(\calM)$ for any cofibrantly generated, hereditary abelian model structure $\calM$ on a Grothendieck
category $\scA$.
\end{abstract}

\maketitle

\section{Introduction}

Given a Grothendieck abelian category $\scA$, we may form its derived category $\bfD(\scA)$, which -- very vaguely --
one might think of as underlying a generic homotopy theory built upon $\scA$. While we don't know how to elaborate this
into a precise statement, this article deals with the following approximation:

\begin{question*}[Realization Problem] If $\calM$ is a 'reasonable' hereditary abelian
model structure on $\scA$ with triangulated homotopy category $\Ho(\calM)$, does there exist a triangulated functor
$\bfD(\scA)\to\Ho(\calM)$ such that the following diagram commutes up to isomorphism?
\begin{equation*}\begin{tikzpicture}[baseline,description/.style={fill=white,inner sep=2pt}]
    \matrix (m) [matrix of math nodes, row sep=1.5em,
                 column sep=2.5em, text height=1.5ex, text depth=0.25ex,
                 inner sep=0pt, nodes={inner xsep=0.3333em, inner ysep=0.3333em}]
    {
       \Ho(\calM) && \bfD(\scA)\\
       & \scA & \\
    };
    \draw[->,rounded corners=4mm] (m-2-2) -| (m-1-3);
    \draw[->,rounded corners=4mm] (m-2-2) -| (m-1-1);
    \draw[->] (m-1-3) -- node[above,scale=0.75]{$\comp$} (m-1-1);
\end{tikzpicture}\end{equation*}
\end{question*}

This question will be answered affirmatively for cofibrantly generated, hereditary abelian model structures on
Grothendieck categories in this article. \vskip1mm

\subsection*{Outline} In this introduction, we will begin by recollecting some by now classical results of Gorenstein homological algebra
leading to a solution of the realization problem for the Gorenstein-projective model structure over an Iwanaga-Gorenstein
ring. The rest of the introduction will then indicate how these results can be generalized, and the details and proofs
of our three main theorems will be supplemented in the remaining sections.

\subsection*{Acknowledgements}

The results of this article are mostly part of my PhD thesis written under the supervision of Prof. Catharina Stroppel at the
University of Bonn. I am very grateful to Catharina for her help and support in all phases of the
project. I am also grateful to Jan Stovicek for a very stimulating discussion at the beginning of my work on the topic
of this article, and to Pieter Belmans for numerous helpful comments.

\subsection{Gorenstein homological algebra via triangulated categories}

Let $R$ be an Iwanaga-Gorenstein ring \cite[Definition 9.1.1]{EnochsJenda}, i.e. $R$ is Noetherian and of finite injective
dimension as a left and as a right module over itself. Further, denote $$\GProj(R) := \{X\in R\Mod\ |\ \forall\ i>0:\
\ext^i_R(X,R)=0\}$$  its Frobenius category of \textit{Gorenstein-projective} $R$-modules, and $$\uGProj(R) := \GProj(R)/\Proj(R)$$ its
\textit{stable category}, a triangulated category. We have the following well-known theorem:
\begin{theorem}\label{theorem_basicA}
There is an equivalence of triangulated categories $$\lift: \uGProj(R)\ \cong\ \bfK\lac(\Proj(R)),$$ where the rhs.~is
the homotopy category of acyclic complexes of projective $R$-modules.
\end{theorem}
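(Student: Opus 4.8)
The plan is to exhibit an explicit quasi-inverse pair: $\lift$ assigns to a Gorenstein-projective module a \emph{complete projective resolution}, and the inverse takes an acyclic complex of projectives to its cocycle in degree zero. Throughout I take for granted that $\GProj(R)$ is a Frobenius exact category whose projective-injective objects are exactly the projective modules, so that the suspension of $\uGProj(R)$ is the cosyzygy functor $\Omega\ui$, and I regard $\bfK\lac(\Proj R)$ as the triangulated subcategory of $\bfK(\Proj R)$ spanned by the acyclic complexes, with the usual shift.

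First I would construct $\lift$. For $X\in\GProj(R)$ one assembles a totally acyclic complex $P^\bullet$ of projectives with $\Z^0(P^\bullet)\cong X$ by splicing an ordinary projective resolution $\cdots\to P^{-1}\twoheadrightarrow X$ (the ``left half'', which is formal) with a co-proper projective coresolution $0\to X\hookrightarrow P^0\to P^1\to\cdots$ (the ``right half''). Existence of the right half is the classical input extracted from the Iwanaga-Gorenstein hypothesis --- equivalently, the completeness of the cotorsion pair $(\GProj(R),\calW)$ with $\calW$ the class of modules of finite projective dimension: a special $\calW$-preenvelope of $X$ has the form $0\to X\to W\to G\to 0$ with $W\in\calW$ and $G\in\GProj(R)$, and then $W\in\GProj(R)\cap\calW=\Proj(R)$, so $W$ is projective and $G$ is the next cosyzygy. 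A standard comparison argument --- lift $\id_X$ along the two resolution properties and check the resulting chain map is a homotopy equivalence --- shows $P^\bullet$ is unique up to homotopy equivalence; a morphism of Gorenstein-projectives lifts to a chain map of complete resolutions, unique up to homotopy, and a morphism factoring through a projective lifts through a contractible complete resolution, so the assignment descends to a functor $\lift\colon\uGProj(R)\to\bfK\lac(\Proj R)$ (landing in the acyclic subcategory since complete resolutions are acyclic).

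Next I would construct the inverse $\Z^0\colon\bfK\lac(\Proj R)\to\uGProj(R)$. That $\Z^0(P^\bullet)$ is Gorenstein-projective is the one point where the hypothesis on $R$ does real work in this direction: the short exact sequences $0\to\Z^n(P^\bullet)\to P^n\to\Z^{n+1}(P^\bullet)\to 0$ give, by a single dimension shift up the complex, $\ext^i_R(\Z^0(P^\bullet),R)\cong\ext^{i+k}_R(\Z^k(P^\bullet),R)$ for every $k\ge 0$, and the right-hand side vanishes once $i+k$ exceeds the injective dimension of $R$ over itself; hence $\ext^{>0}_R(\Z^0(P^\bullet),R)=0$. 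Passage to homotopy classes is legitimate because a null-homotopic endomorphism of an acyclic complex of projectives induces on $\Z^0$ a map factoring through a projective module (the degree $-1$ term of the target), hence vanishing in $\uGProj(R)$; and $\Z^0$ lands in the stable category by construction. The composite $\Z^0\circ\lift$ is naturally isomorphic to $\id_{\uGProj(R)}$ by the construction of complete resolutions, and $\lift\circ\Z^0\cong\id$ by the uniqueness statement applied to $X=\Z^0(P^\bullet)$, so $\lift$ is an equivalence of categories.

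Finally, for exactness of $\lift$: the short exact sequence $0\to\Z^0(P^\bullet)\to P^0\to\Z^0(P^\bullet[1])\to 0$ exhibits $\Z^0(P^\bullet[1])$ as the cosyzygy $\Omega\ui\Z^0(P^\bullet)$, naturally in $P^\bullet$, so the two suspensions agree under the equivalence; and a distinguished triangle of $\bfK\lac(\Proj R)$, presented as a mapping-cone triangle $P^\bullet\xrightarrow{f}Q^\bullet\to\cone(f)\to P^\bullet[1]$, is carried by $\Z^0$ to the triangle attached to $0\to\Z^0(Q^\bullet)\to\Z^0(\cone f)\to\Z^0(P^\bullet[1])\to 0$, which is a conflation of Gorenstein-projective modules and hence yields a distinguished triangle of $\uGProj(R)$. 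I expect the genuine obstacle to be precisely the two places where finite self-injective dimension is used: producing the right half of a complete resolution, and the dimension shift forcing $\Z^0(P^\bullet)$ to be Gorenstein-projective (equivalently, forcing acyclic complexes of projectives over $R$ to be totally acyclic). Everything else --- the comparison theorem for resolutions, functoriality on morphisms, and compatibility with the two triangulated structures --- is diagram-chasing.
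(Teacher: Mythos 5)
Your proposal is correct, and it is in substance the classical argument that the paper compresses into a citation: the paper proves Theorem \ref{theorem_basicA} by observing that $\GProj(R)$ is a Frobenius category with projective-injective objects $\Proj(R)$ and then invoking the general statement (Theorem \ref{theorem_frobeniusclassical}) that for any Frobenius category $\scF$ with projective-injectives $\omega$ the syzygy/cosyzygy functors give equivalences $\bfK\lac(\omega)\cong\ul{\scF}$. Your construction is exactly the concrete instantiation of those functors for $\scF=\GProj(R)$, $\omega=\Proj(R)$: the functor $\lift$ is the complete-resolution construction and the quasi-inverse is $\Z^0=\syz^0$, so the two routes differ only in that you unwind the abstract theorem rather than quote it. What your write-up buys, and what the paper's one-line reduction leaves implicit, is the identification of $\bfK\lac(\Proj(R))$ (complexes acyclic in $R\Mod$, as in the statement) with the complexes that are acyclic relative to the exact structure of $\GProj(R)$, i.e.\ have all syzygies Gorenstein-projective; your dimension-shifting argument using the finiteness of the injective dimension of $R$ over itself is precisely the point where the Iwanaga--Gorenstein hypothesis enters, and it is needed for the abstract Frobenius statement to yield Theorem \ref{theorem_basicA} as stated. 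The remaining ingredients you assume (completeness of the cotorsion pair $(\GProj(R),\calW)$ with core $\GProj(R)\cap\calW=\Proj(R)$, the Frobenius structure, the comparison theorem for complete resolutions) are the same classical facts the paper also takes as known, so no gap; only minor points are left tacit on your side, e.g.\ that $W\in\GProj(R)$ in your preenvelope argument follows from extension-closure of $\GProj(R)$, and that the pointwise isomorphism $\lift\circ\Z^0\cong\id$ is natural because comparison maps of complete resolutions are functorial up to homotopy.
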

This is a consequence of $\GProj(R)$ being a Frobenius category with projective-injective objects given by $\Proj(R)$
together with Theorem \ref{theorem_frobeniusclassical} below.\vskip2mm

Next, the chain-complex description of the stable category relates to the ordinary derived category through a recollement:
\begin{theorem}[\protect{\cite[Theorem 5.15]{Murfet_MockHomotopyCategoryOfProjectives}, \cite[Corollary
    4.3]{Krause_StableDerived}}]\label{theorem_basicB}
There are recollements of triangulated categories
\begin{align*}\bfK\lac(\Proj(R))\bigrec\bfK(\Proj(R))\bigrec\bfD(R)\\
  \bfK\lac(\Inj(R))\bigrec\bfK(\Inj(R))\bigrec\bfD(R)\end{align*}
\end{theorem}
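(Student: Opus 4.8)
The plan is to assemble each recollement from a (co)localization sequence together with one extra adjoint, so that the only substantive work lies in three input facts; I describe the injective case (essentially Krause's argument) and then the projective one (Murfet). Throughout, write $Q$ for the composite $\bfK(\Inj R)\hookrightarrow\bfK(R)\to\bfD(R)$ and $Q'$ for $\bfK(\Proj R)\hookrightarrow\bfK(R)\to\bfD(R)$.

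First I would establish that $\bfK(\Inj R)$ is compactly generated: since $R$ is Noetherian, arbitrary coproducts of injectives are injective, so $\bfK(\Inj R)$ is closed under coproducts in $\bfK(R)$, and one checks that the injective resolutions of the modules $R/\mathfrak p$ ($\mathfrak p$ prime) --- equivalently, the objects of $\bfD^{b}(\mathrm{mod}\,R)$ realized as bounded-below complexes of injectives --- form a set of compact generators. Next I would use that every complex of $R$-modules admits a homotopically injective resolution by a complex of injectives; this produces a fully faithful functor $I\colon\bfD(R)\to\bfK(\Inj R)$ right adjoint to $Q$, so $Q$ is a Bousfield localization whose kernel is precisely the acyclic complexes of injectives. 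Consequently $Q$ identifies $\bfD(R)\simeq\bfK(\Inj R)/\bfK\lac(\Inj R)$ and the inclusion $\bfK\lac(\Inj R)\hookrightarrow\bfK(\Inj R)$ automatically acquires a right adjoint. Finally I would record that $\bfK\lac(\Inj R)$ is itself compactly generated, with compacts the singularity category $\bfD^{b}(\mathrm{mod}\,R)/\Perf(R)$; this pins down the outer term (and is obtained en route), though it is not logically needed for the recollement itself.

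With these in hand the injective recollement is formal: the above is a localization sequence $\bfK\lac(\Inj R)\to\bfK(\Inj R)\xrightarrow{Q}\bfD(R)$, and since $Q$ preserves coproducts between compactly generated categories, Brown representability supplies a left adjoint of $Q$; a localization sequence whose quotient functor has a left adjoint extends to a recollement, which in particular furnishes the remaining left adjoint of the inclusion of acyclics. For the projective side I would run the mirror argument: $\bfK(\Proj R)$ is compactly generated for $R$ Noetherian (J{\o}rgensen, Neeman), and $K$-projective resolution --- available over any ring --- gives a fully faithful \emph{left} adjoint to $Q'\colon\bfK(\Proj R)\to\bfD(R)$ with kernel the acyclic complexes of projectives, i.e.\ a \emph{co}localization sequence. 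To upgrade it to a recollement one needs the opposite adjoint: a right adjoint of $Q'$, equivalently of $\bfK\lac(\Proj R)\hookrightarrow\bfK(\Proj R)$.

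That last point is where I expect the main obstacle: products in $\bfK(\Proj R)$ are not computed degreewise (projectives are not closed under products), so the needed right adjoint is invisible from the naive description, and controlling it is exactly what Murfet's mock homotopy category of projectives and the attendant Grothendieck-duality comparison are designed for; once $\bfK\lac(\Proj R)$ is known to be compactly generated (again with compacts the singularity category), Brown representability delivers the adjoint and the recollement follows. More broadly, the real content of the theorem --- and the reason the Noetherian hypothesis is indispensable --- is the compact generation of $\bfK(\Inj R)$, $\bfK(\Proj R)$ and of their acyclic subcategories; the triangulated bookkeeping that converts the resulting (co)localization sequences into recollements is purely formal.
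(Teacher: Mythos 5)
This statement is quoted background: the paper gives no proof of its own and simply cites Krause (injective case) and Murfet (projective case), and your outline is essentially a reconstruction of those arguments -- compact generation of $\bfK(\Inj(R))$ resp.\ $\bfK(\Proj(R))$ for $R$ Noetherian, the resolution functors giving one fully faithful adjoint, and Brown representability supplying the adjoint on the other side, after which the passage from a (co)localization sequence to a recollement is formal. That architecture is the right one and matches the cited sources.

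There is, however, one genuine misstep at the load-bearing point of the injective case: you write that ``since $Q$ preserves coproducts between compactly generated categories, Brown representability supplies a \emph{left} adjoint of $Q$''. Brown representability for a compactly generated source turns coproduct preservation into a \emph{right} adjoint -- which you already have for free from K-injective resolutions -- so as stated this produces nothing new, and the left adjoint $Q_\lambda$ (the one adjoint not coming from resolutions, and the whole content of upgrading the localization sequence to a recollement) is left unconstructed. The repair is standard but must be said: degreewise products of complexes of injectives are again complexes of injectives and products are exact, so $Q\colon\bfK(\Inj(R))\to\bfD(R)$ preserves products, and the \emph{dual} Brown representability theorem (valid for compactly generated categories) then yields $Q_\lambda$; alternatively one follows Krause's explicit construction of $Q_\lambda$ on the compact generators. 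Two smaller inaccuracies: the ring $R$ here is an arbitrary Iwanaga--Gorenstein ring, not assumed commutative, so the compact generators of $\bfK(\Inj(R))$ should be injective resolutions of all finitely generated modules rather than of the $R/\mathfrak{p}$; and on the projective side the missing right adjoint does not really require Murfet's mock homotopy category (that machinery is aimed at non-affine schemes) nor compact generation of $\bfK\lac(\Proj(R))$ -- since $\bfK(\Proj(R))$ is compactly generated for $R$ Noetherian (J{\o}rgensen, Neeman) and $Q'$ preserves coproducts, Brown representability applied to $Q'$ itself already gives its right adjoint, and the recollement follows because $Q'$ then has adjoints on both sides. Your detour through compact generation of the acyclic subcategory is correct but imports an extra input you do not need.
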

In fact, the injective recollement exists in a considerably more general situation, see
\cite{Krause_StableDerived}. Finally, we have the following description of the stabilization functor \cite[\S
5]{Krause_StableDerived} of the recollement from Theorem
\ref{theorem_basicB}. In the remainder of this paper, left-directed left resp. right adjoints in
recollements we be denoted $\lambda$ resp. $\rho$.

\begin{theorem}[\protect{\cite[\S 7]{Krause_StableDerived}}]\label{theorem_basicC}
The composition $$R\Mod\hookrightarrow\bfD(R)\xhookrightarrow{\ \ \rho\ \ }\bfK(\Proj(R)) \xrightarrowdbl{\ \ \lambda\ \ }\bfK\lac(\Proj(R))
\xrightarrow{\ \ \lift\ \ }\uGProj(R)$$ is equivalent to the \textit{Gorenstein-projective approximation}
\end{theorem}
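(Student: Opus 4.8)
The plan is to evaluate the displayed composition on modules and, using the defining short exact sequence of the Gorenstein-projective approximation, reduce to the case of a Gorenstein-projective module, where Theorem~\ref{theorem_basicA} applies. Write the second recollement of Theorem~\ref{theorem_basicB} as $i_\ast\colon\bfK\lac(\Proj(R))\hookrightarrow\bfK(\Proj(R))$ with left adjoint $\lambda=i^\ast$ and right adjoint $i^!$, and $j^\ast\colon\bfK(\Proj(R))\to\bfD(R)$ with fully faithful sections $j_!\dashv j^\ast\dashv j_\ast=\rho$. I will use the following standard features of this recollement: the essential image of $j_!$ is ${}^{\perp}\bfK\lac(\Proj(R))$, which contains every deleted projective resolution (a bounded-above complex of projectives being $\mathrm{K}$-projective), so that $j_!M\cong P_\bullet$ for a module $M$ with projective resolution $P_\bullet$; the functor $i^\ast$ annihilates precisely the essential image of $j_!$; there is a canonical natural transformation $c\colon j_!\Rightarrow j_\ast$; and every $X$ sits in a natural triangle $j_!j^\ast X\to X\to i_\ast i^\ast X\xrightarrow{+1}$ whose first arrow is the counit, which for $X=j_\ast Y$ reads $j_!Y\xrightarrow{\,c_Y\,}j_\ast Y\to i_\ast i^\ast j_\ast Y\xrightarrow{+1}$.

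First I would treat a module $W$ with $\operatorname{pd}_R W<\infty$. Then $j_!W$ is a \emph{bounded} complex of projectives; since $R$ is Iwanaga-Gorenstein, acyclic complexes of projectives are totally acyclic, hence $\Hom_R(-,Q)$-exact for every projective module $Q$, and filtering $j_!W$ by its finitely many shifted graded pieces shows that $j_!W$ lies in $\bfK\lac(\Proj(R))^{\perp}$, i.e.\ in the essential image of $j_\ast$. So the canonical map $c_W\colon j_!W\to j_\ast W$ is invertible, and $\lambda\rho\,W=i^\ast j_\ast W\cong i^\ast j_!W=0$. For arbitrary $M$, choose a Gorenstein-projective approximation $0\to W\to G\to M\to 0$ with $G\in\GProj(R)$ and $\operatorname{pd}_R W<\infty$; applying the triangulated functor $\lambda\rho$ to the triangle $W\to G\to M\xrightarrow{+1}$ in $\bfD(R)$ then yields a natural isomorphism $\lambda\rho\,M\cong\lambda\rho\,G$. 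So it is enough to produce a natural isomorphism $\lambda\rho\,G\cong\lift(G)$ in $\bfK\lac(\Proj(R))$ for $G\in\GProj(R)$.

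Fix $G\in\GProj(R)$ and a complete projective resolution $T_\bullet=\lift(G)$, normalised so that $G$ is its degree-zero cycle module. The degreewise-split short exact sequence of complexes $0\to T_{\le 0}\to T_\bullet\to T_{>0}\to 0$ given by stupid truncation rotates to a triangle $T_{>0}[-1]\xrightarrow{\,\delta\,}T_{\le 0}\to T_\bullet\to T_{>0}$ in $\bfK(\Proj(R))$, in which $T_{>0}[-1]$ is a projective resolution of $G$, hence $\cong j_!G$, while $T_{\le 0}$ is a complex of projectives quasi-isomorphic to $G$. The crux is the orthogonality assertion that $T_{\le 0}$ lies in $\bfK\lac(\Proj(R))^{\perp}$, so that $T_{\le 0}\cong j_\ast G=\rho(G)$. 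Granting this, $\delta$ is a map $j_!G\to j_\ast G$ whose image under $j^\ast$ is invertible (the third term of the triangle being the acyclic $T_\bullet$), and since $j_!\dashv j^\ast$ together with $j^\ast j_\ast\cong\id$ identifies $\Hom_{\bfK(\Proj(R))}(j_!G,j_\ast G)$ with $\End_R(G)$, the map $\delta$ differs from the canonical $c_G$ only by an automorphism of $j_\ast G$; hence $\cone(\delta)\cong\cone(c_G)$. The left-hand side is $T_\bullet$ by the truncation triangle, the right-hand side is $i_\ast i^\ast j_\ast G=i_\ast(\lambda\rho\,G)$ by the recollement triangle, and since $i_\ast$ is fully faithful we obtain $\lambda\rho\,G\cong T_\bullet=\lift(G)$. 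Together with the reduction of the previous paragraph, this presents the displayed composition as the assignment $M\mapsto G$, the Gorenstein-projective approximation; naturality in $M$ is inherited from the naturality of the recollement triangles, of $c$, and of the Gorenstein-projective approximation up to stable equivalence.

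The main obstacle is the orthogonality assertion, i.e.\ the concrete description of $\rho=j_\ast$ on Gorenstein-projective modules (equivalently, of the right orthogonal of $\bfK\lac(\Proj(R))$ inside $\bfK(\Proj(R))$): one must show that $\Hom^{\bullet}_R(A,T_{\le 0})$ is acyclic for every acyclic complex of projectives $A$. The one non-formal input is that over an Iwanaga-Gorenstein ring acyclic complexes of projectives are totally acyclic; granting this, $\Hom^{\bullet}_R(A,-)$ sends each bounded brutal truncation of $T_{\le 0}$ to an acyclic complex, and because $T_{\le 0}$ is the inverse limit of these truncations along degreewise surjective transition maps, a $\varprojlim^{1}$-vanishing argument gives acyclicity of $\Hom^{\bullet}_R(A,T_{\le 0})$ itself. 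A subsidiary point is to align the cycle-degree normalisation of $\lift$ so that the final comparison carries no spurious suspension, which is possible since $\lift$ is determined only up to natural isomorphism. Everything else is formal recollement and triangle bookkeeping.
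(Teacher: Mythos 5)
The paper offers no proof of Theorem \ref{theorem_basicC} at all --- it is imported from \cite[\S 7]{Krause_StableDerived} together with its projective analogue --- so there is no internal argument to compare yours against; judged on its own terms, your proof follows the standard route underlying the cited results and its main steps are sound. The vanishing $\lambda\rho W=0$ for $W$ of finite projective dimension is correct: a finite projective resolution lies in $\bfK\lac(\Proj(R))^{\perp}$ because over an Iwanaga--Gorenstein ring every acyclic complex of projectives is totally acyclic (the same fact the paper itself invokes in Example \ref{ex_liftgorenstein}), and then $c_W$ is invertible and $i^{\ast}j_{!}=0$ finishes it. The reduction of a general $M$ to $G\in\GProj(R)$ via the approximation sequence $0\to W\to G\to M\to 0$ is the right move. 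The crux, $\rho(G)\cong T_{\le 0}$, is handled correctly: total acyclicity gives the vanishing against the bounded brutal truncations, and the tower of truncations is degreewise (split) surjective, so the $\varprojlim^{1}$ argument does yield acyclicity of $\Hom^{\bullet}_R(A,T_{\le 0})$. The cone comparison is also legitimate: under the adjunction bijection $\Hom(j_{!}G,j_{\ast}G)\cong\End_R(G)$ the connecting map $\delta$ and the canonical map $c_G$ differ by $j_{\ast}$ of an automorphism of $G$, so $\cone(\delta)\cong\cone(c_G)$, giving $i_{\ast}(\lambda\rho\,G)\cong T_{\bullet}$ and hence $\lambda\rho\,G\cong\lift(G)$ up to the normalization of $\lift$, which you rightly flag as a choice between the two equivalences $\cosyz^0$ and $\syz^0$ of Theorem \ref{theorem_frobeniusclassical}.

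The one place where you are too quick is the closing sentence on naturality. Your identifications are assembled from non-functorial choices (a complete resolution of $G$, an approximation sequence for $M$, an automorphism absorbed into $j_{\ast}G$), and ``naturality is inherited'' is an assertion rather than an argument; the paper's own closing remark about Fact \ref{fact_stabiso} illustrates that upgrading pointwise isomorphisms between such stabilization-type functors to natural ones can be genuinely delicate, and the theorem is used precisely to make a diagram commute up to isomorphism of functors. A clean repair is available: note that the composite kills modules of finite projective dimension, hence descends to $\ul{R\Mod}$, and then identify it as right adjoint to the inclusion $\uGProj(R)\hookrightarrow\ul{R\Mod}$ by computing $\Hom_{\bfK(\Proj(R))}(\lift G',i_{\ast}\lambda\rho\,M)$ for $G'\in\GProj(R)$ from the recollement triangle $j_{!}M\to j_{\ast}M\to i_{\ast}\lambda\rho\,M\to\Sigma j_{!}M$ (the terms involving $j_{\ast}M$ die since $\lift G'$ is acyclic, and maps from a totally acyclic complex of projectives into a shifted projective resolution compute stable Hom); uniqueness of right adjoints then gives the natural isomorphism, with your Steps 1--3 confirming the pointwise description. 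With that supplement the proof is complete.
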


Here, by Gorenstein-projective approximation the right adjoint $\ul{R\Mod}\to\uGProj(R)$ to the inclusion functor is meant.\vskip2mm

As the next section will show, Theorem \ref{theorem_basicC} is essentially the solution to a realization problem in
the present situation. Also, note that Theorems \ref{theorem_basicA}, \ref{theorem_basicB}, \ref{theorem_basicC} have
analogues in the Gorenstein-injective case.

\subsection{Gorenstein homological algebra via model categories}

The results of the previous section can be lifted to the world of abelian model categories and thereby be seen as intimately
related to a suitable realization problem in this context.

For brevity, we refer to
\cite{Becker_ModelsForSingularityCategories} for the basics on abelian model categories and also keep the notation of
op.~cit.; in particular, abelian model structures will be denoted $\calM$, and their classes of cofibrant, weakly trivial
and fibrant objects in an abelian model category will be denoted $\calC$, $\calW$ and $\calF$, respectively.

The first step is the introduction of an abelian model structure for $\uGProj(R)$:
\begin{theorem}[\protect{\cite[Theorem 8.6]{Hovey_Cotorsion}}]
Let $R$ be a Gorenstein ring.
\begin{enumerate}
\item There exists an abelian model structure on $R\Mod$, called the \textit{Gorenstein projective} model
  structure and denoted $\calM\ugproj(R)$, with $\calC = \gproj(R)$, $\calW = \proj^{<\infty}(R)$ (the modules of finite
  projective dimension) and $\calF=R\Mod$.
\item There exists an abelian model structure on $R\Mod$, called the \textit{Gorenstein injective} model
  structure and denoted $\calM\uginj(R)$, with $\calC=R\Mod$, $\calW = \proj^{<\infty}(R)$ and $\calF = \ginj(R)$.
\end{enumerate}
Moreover, $\calM\ugproj(R)$ and $\calM\uginj(R)$ are cofibrantly generated, and their homotopy categories are
canonically equivalent to the stable categories of Gorenstein-projective resp. Gorenstein-injective $R$-modules.
\end{theorem}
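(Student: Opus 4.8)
The plan is to verify the hypotheses of Hovey's correspondence, which identifies an abelian model structure on $R\Mod$ with a triple $(\calC,\calW,\calF)$ of classes such that $\calW$ is thick and $(\calC\cap\calW,\calF)$ and $(\calC,\calW\cap\calF)$ are complete cotorsion pairs; the model structure is hereditary exactly when both pairs are hereditary, and cofibrantly generated exactly when both are cogenerated by a set. So for each of (1) and (2) I would check these conditions and then read off the homotopy category. The preliminary point --- and the place where the Gorenstein hypothesis on $R$ is really used --- is that, writing $n:=\mathrm{id}_R R<\infty$, a module has finite projective dimension iff it has finite injective dimension, both being $\le n$ in that case (Iwanaga's theorem); hence $\proj^{<\infty}(R)=\{M:\mathrm{pd}_R M\le n\}=\inj^{<\infty}(R)$ is a thick subcategory.

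For (1) I would take $(\calC,\calW,\calF)=(\gproj(R),\proj^{<\infty}(R),R\Mod)$. Since a Gorenstein-projective module of finite projective dimension is projective (a dimension count using $\ext^{>0}_R(\gproj(R),R)=0$), one has $\calC\cap\calW=\Proj(R)$, so the pair $(\calC\cap\calW,\calF)=(\Proj(R),R\Mod)$ is the trivial projective cotorsion pair --- complete, hereditary, cogenerated by $\{R\}$. The substantive pair is $(\calC,\calW\cap\calF)=(\gproj(R),\proj^{<\infty}(R))$, and I would first check the orthogonality identities $\gproj(R)^{\perp}=\proj^{<\infty}(R)$ and ${}^{\perp}\proj^{<\infty}(R)=\gproj(R)$. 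These are the classical facts of Gorenstein homological algebra over an Iwanaga--Gorenstein ring: $\ext^{\ge 1}_R$ vanishes on $\gproj(R)\times\proj^{<\infty}(R)$ by dimension-shifting along a finite projective resolution, and the remaining inclusions follow from the facts that $n$-th syzygies are Gorenstein-projective (because $\ext^{>n}_R(-,R)=0$), that $\gproj(R)$ is a Frobenius category with projective-injectives $\Proj(R)$, and that $\GProj(R)$ is by definition $\{N:\ext^{>0}_R(N,R)=0\}$; heredity of both pairs is immediate, since $\gproj(R)$ and $\Proj(R)$ are resolving. The real content is completeness of $(\gproj(R),\proj^{<\infty}(R))$, and I would obtain it, together with cofibrant generation, by exhibiting a cogenerating set: as $R$ is Noetherian, $\mathrm{id}_R M\le n$ is equivalent to $\ext^{n+1}_R(R/I,M)=0$ for every ideal $I$, which by shifting in the first variable says $\ext^1_R(\Omega^n(R/I),M)=0$ for every $I$; so $S:=\{\Omega^n(R/I):I\trianglelefteq R\}$ is a \emph{set} of Gorenstein-projective modules with $S^{\perp}=\proj^{<\infty}(R)$, and the Eklof--Trlifaj theorem makes $({}^{\perp}(S^{\perp}),S^{\perp})=(\gproj(R),\proj^{<\infty}(R))$ a complete cotorsion pair cogenerated by $S$. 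Part (2) is formally dual: the Hovey triple is $(R\Mod,\proj^{<\infty}(R),\ginj(R))$, the trivial pair is $(R\Mod,\Inj(R))$ (cogenerated by $\{R/I:I\trianglelefteq R\}$ via Baer's criterion), the substantive pair $(\proj^{<\infty}(R),\ginj(R))$ carries the dual orthogonality identities, and its completeness and cofibrant generation come from the deconstructibility of the class of modules of projective dimension $\le n$.

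Finally, for any hereditary abelian model structure the bifibrant objects $\calC\cap\calF$ form a Frobenius category with projective-injectives $\calC\cap\calW\cap\calF$, and $\Ho$ of the structure is its stable category (this is the general statement behind Theorem~\ref{theorem_frobeniusclassical}); for $\calM\ugproj(R)$ this gives $\calC\cap\calF=\gproj(R)$ and $\calC\cap\calW\cap\calF=\Proj(R)$, so $\Ho(\calM\ugproj(R))\cong\uGProj(R)$, and dually $\Ho(\calM\uginj(R))\cong\ginj(R)/\Inj(R)$, the stable category of Gorenstein-injective modules. I expect the main obstacle to be the completeness/cofibrant-generation step --- pinning down the cogenerating set and verifying $S^{\perp}=\proj^{<\infty}(R)$, and its dual for (2); everything else is bookkeeping with syzygies resting on $\mathrm{id}_R R<\infty$, the equivalence $\mathrm{pd}_R<\infty\Leftrightarrow\mathrm{id}_R<\infty$, and the defining description of $\GProj(R)$.
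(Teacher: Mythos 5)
Your proposal is correct and follows essentially the same route as the source this theorem is quoted from: the paper itself gives no proof but cites Hovey's Theorem 8.6, whose argument is exactly your combination of the Hovey correspondence, Iwanaga's theorem making $\proj^{<\infty}(R)=\inj^{<\infty}(R)$ thick, the cotorsion pairs cogenerated by the set $\{\Omega^n(R/I)\}$ (Eklof--Trlifaj for completeness and cofibrant generation), and the identification of the homotopy category with the stable category of bifibrant objects. The only step you compress is the identification of ${}^{\perp}\proj^{<\infty}(R)$ with the class $\{X:\ext^{>0}_R(X,R)=0\}$ for not necessarily finitely generated modules, which is a known but genuinely nontrivial fact of Gorenstein homological algebra rather than pure bookkeeping.
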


The previous section solves the realization problem for $\calM\ugproj(R)$: The
Gorenstein-projective approximation of an $R$-module is its cofibrant replacement in the Gorenstein-projective model
structure $\calM\ugproj(R)$, so Theorem \ref{theorem_basicC} shows that the diagram
\begin{equation*}\begin{tikzpicture}[baseline,description/.style={fill=white,inner sep=2pt}]
    \matrix (m) [matrix of math nodes, row sep=1.5em,
                 column sep=2.5em, text height=1.5ex, text depth=0.25ex,
                 inner sep=0pt, nodes={inner xsep=0.3333em, inner ysep=0.3333em}]
    {
       \Ho(\calM\ugproj(R))\cong \uGProj(R) && \bfD(R\Mod)\\
       & R\Mod & \\
    };
    \draw[->,rounded corners=4mm] (m-2-2) -| (m-1-3);
    \draw[->,rounded corners=4mm] (m-2-2) -| ($(m-1-1) - (1cm,3mm)$);
    \draw[->] (m-1-3) -- node[above,scale=0.75]{$\lift\circ\lambda\circ\rho$} (m-1-1);
\end{tikzpicture}\end{equation*}
commutes up to isomorphism, as wished.

The following theorem lifts Theorem \ref{theorem_basicB} to the level of abelian model categories. Its generalizability
to suitable abelian model structures will be the key to constructing a realization functor in the general setting.

\begin{theorem}[\protect{\cite[Proposition 2.2.4]{Becker_ModelsForSingularityCategories}}]
The injective recollement associated with a Gorenstein ring $R$ admits a lifting to a \emph{butterfly} of
abelian model structures (see below):
\begin{align*}
\begin{tikzpicture}[baseline,description/.style={fill=white,inner sep=2pt}]
    \matrix (m) [matrix of math nodes, row sep=1.8em,
                 column sep=4em, text height=1.5ex, text depth=0.25ex,
                 inner sep=0pt, nodes={inner xsep=0.3333em, inner ysep=0.3333em}]
    {
       \calM\uco\lsing(R) \pgfmatrixnextcell\pgfmatrixnextcell \calM\uinj(R)\\
       \pgfmatrixnextcell \calM\uco(R) \pgfmatrixnextcell \\
       {^{i}}\calM\uco\lsing(R) \pgfmatrixnextcell\pgfmatrixnextcell {^{m}}\calM\uinj(R)\\
    };
    \draw[->] ($(m-1-1.south) + (1mm,0)$) -- node[scale=0.75,right]{L} ($(m-3-1.north) + (1mm,0)$);
    \draw[->] ($(m-3-1.north) - (+1mm,0)$) -- node[scale=0.75,left]{R} ($(m-1-1.south) - (1mm,0)$);
    \draw[->] ($(m-1-3.south) + (1mm,0)$) -- node[scale=0.75,right]{R} ($(m-3-3.north) + (1mm,0)$);
    \draw[->] ($(m-3-3.north) + (-1mm,0)$) -- node[scale=0.75,left]{L} ($(m-1-3.south) - (1mm,0)$);
    \draw[->] ($(m-1-1.south east) - (4mm,0)$) -- node[scale=0.75,below]{L} ($(m-2-2.north west) + (0mm,0)$);
    \draw[->] ($(m-2-2.north west) + (4mm,0)$) -- node[scale=0.75,above]{R} ($(m-1-1.south east) - (0mm,0)$);
    \draw[->] ($(m-3-1.north east) + (0mm,0)$) -- node[scale=0.75,below]{R} ($(m-2-2.south west) + (4mm,0)$);
    \draw[->] ($(m-2-2.south west) + (0mm,0)$) -- node[scale=0.75,above]{L} ($(m-3-1.north east) - (4mm,0)$);
    \draw[->] ($(m-2-2.north east) + (0mm,0)$) -- node[scale=0.75,below]{L} ($(m-1-3.south west) + (4mm,0)$);
    \draw[->] ($(m-1-3.south west) + (0mm,0)$) -- node[scale=0.75,above]{R} ($(m-2-2.north east) - (4mm,0)$);
    \draw[->] ($(m-2-2.south east) - (4mm,0)$) -- node[scale=0.75,below]{R} ($(m-3-3.north west) + (0mm,0)$);
    \draw[->] ($(m-3-3.north west) + (4mm,0)$) -- node[scale=0.75,above]{L} ($(m-2-2.south east) + (0mm,0)$);
\end{tikzpicture}
\end{align*}
Similarly, the projective recollement from Theorem \ref{theorem_basicB} arises from a butterfly.
\end{theorem}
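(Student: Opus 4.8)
The plan is to realise the butterfly as a configuration of five abelian model structures on the single category $\Ch(R)$, with every comparison functor equal to $\id_{\Ch(R)}$, and to check that the derived identity functors reproduce the six functors of the injective recollement of Theorem~\ref{theorem_basicB}. By Hovey's correspondence \cite{Hovey_Cotorsion}, such a model structure on $\Ch(R)$ is exactly a pair of complete cotorsion pairs $(\calC,\calW\cap\calF)$, $(\calC\cap\calW,\calF)$ with common thick core $\calW$, so a node is pinned down by its cofibrant, fibrant and weakly trivial classes. First I would recall these: the dg-injective model structure (all objects cofibrant, fibrant ones the dg-injective complexes, $\calW$ the acyclic complexes, $\Ho=\bfD(R)$) for $\calM\uinj(R)$; the coderived model structure (all objects cofibrant, fibrant ones the complexes of injectives, $\calW$ the coacyclic complexes, $\Ho=\bfK(\Inj(R))$) for the central node $\calM\uco(R)$; its singular refinement, whose cofibrant--fibrant objects are the acyclic complexes of injectives and whose homotopy category is $\bfK\lac(\Inj(R))$, for $\calM\uco\lsing(R)$; and the companion structures ${^{i}}\calM\uco\lsing(R)$, ${^{m}}\calM\uinj(R)$, which present $\bfK\lac(\Inj(R))$ resp.\ $\bfD(R)$ once more but with the roles of $\calC$ and $\calF$ interchanged (for instance ${^{m}}\calM\uinj(R)$ is the projective model structure on $\Ch(R)$). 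Completeness of the cotorsion pairs involved is where the hypotheses enter: since $R$ is Noetherian of finite injective dimension on both sides, the relevant cotorsion pairs of complexes are generated by sets and hence complete by the small object argument, and the same hypothesis identifies the two presentations of $\calW$ needed for the companion structures.

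Next I would verify the six Quillen adjunctions of the diagram. Since each is $\id_{\Ch(R)}\dashv\id_{\Ch(R)}$, left-Quillenness of $\id\colon\calM_{1}\to\calM_{2}$ reduces to the inclusions $\calC_{1}\subseteq\calC_{2}$ and $\calC_{1}\cap\calW_{1}\subseteq\calC_{2}\cap\calW_{2}$ -- equivalently $\calF_{2}\subseteq\calF_{1}$ and $\calW_{2}\cap\calF_{2}\subseteq\calW_{1}\cap\calF_{1}$ -- which are read off directly from the five triples $(\calC,\calW,\calF)$. Carrying this out for the two ``wing'' adjunctions $\calM\uco\lsing(R)\leftrightarrow{^{i}}\calM\uco\lsing(R)$, $\calM\uinj(R)\leftrightarrow{^{m}}\calM\uinj(R)$ and for the four adjunctions joining the wings to the body $\calM\uco(R)$ shows that the diagram has precisely the shape and variances of a butterfly.

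I would then pass to homotopy categories. By $\Ho(\calM)\cong\ul{\calC\cap\calF}$ together with the elementary formulas for derived functors of $\id$, the six derived comparison functors are, up to natural isomorphism, the inclusion $\bfK\lac(\Inj(R))\hookrightarrow\bfK(\Inj(R))$, the Verdier localisation $\bfK(\Inj(R))\to\bfD(R)$, and the left and right adjoints of each -- i.e.\ the six functors of the injective recollement of Theorem~\ref{theorem_basicB} as described by Krause \cite{Krause_StableDerived} (cf.\ Theorem~\ref{theorem_basicC}) -- so the recollement axioms hold automatically once the identification is made. The point requiring care is that the inclusion $\bfK\lac(\Inj(R))\hookrightarrow\bfK(\Inj(R))$ appears twice, once as a left and once as a right derived identity functor (from $\calM\uco\lsing(R)$ and from ${^{i}}\calM\uco\lsing(R)$ respectively), and dually the localisation appears twice from the two structures on the right wing; this coherence forces $\Ho(\calM\uco\lsing(R))\simeq\Ho({^{i}}\calM\uco\lsing(R))$ and $\Ho(\calM\uinj(R))\simeq\Ho({^{m}}\calM\uinj(R))$ compatibly, and is exactly what the companion structures are for. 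The projective recollement is obtained from the evident dual butterfly, built from the projective and contraderived model structures and their singular refinements and using finite projective dimension in place of finite injective dimension -- legitimate because over an Iwanaga--Gorenstein ring these two finiteness conditions coincide.

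I expect the main difficulty to lie in the construction and analysis of the companion structures ${^{i}}\calM\uco\lsing(R)$ and ${^{m}}\calM\uinj(R)$: one needs genuinely distinct abelian model structures on $\Ch(R)$ with the same homotopy category but opposite variance relative to $\calM\uco(R)$, realising the same triangulated functor as both a left and a right derived functor, so that the butterfly really cuts out a full recollement and not merely a localisation sequence with one-sided adjoints. Producing the requisite complete cotorsion pairs -- and in particular verifying that the thick class $\calW$ admits the two presentations demanded by the two companion structures -- is precisely where the Iwanaga--Gorenstein hypothesis on $R$ is used essentially; without it the cotorsion pairs underlying the wings need not even be complete.
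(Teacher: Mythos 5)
The paper does not actually prove this statement here: it is imported from \cite[Proposition 2.2.4]{Becker_ModelsForSingularityCategories}, and its intended proof (there, and in generalized form in Theorem \ref{thm_chainbutterfly} of the present paper) follows the architecture you describe -- five Hovey triples on $\Ch(R)$ related by identity Quillen adjunctions, with the underlying cotorsion pairs shown to be complete by set-generation/deconstructibility arguments. So your overall plan is the right one. The genuine gap is that you never construct the two companion structures, which you yourself single out as the crux. The recipe you offer, ``the same homotopy category with the roles of $\calC$ and $\calF$ interchanged,'' cannot work for the left wing: it would require an abelian model structure whose cofibrant class is $\Acyc(R)\cap\Ch(\Inj(R))$, but the left half of any cotorsion pair in $\Ch(R)$ must contain the projective objects of $\Ch(R)$ (the contractible complexes of projectives), which are not complexes of injectives unless $R$ is quasi-Frobenius; hence no such Hovey triple exists. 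And your one concrete identification, that ${^{m}}\calM\uinj(R)$ ``is the projective model structure,'' is not the structure occurring in the butterfly: specializing Figure \ref{fig_chainbutterfly} to the Gorenstein-injective model structure on $R\Mod$, the two extra nodes are the triples with cofibrant class $\Acyc(R)$ resp. ${}^{\perp}\bigl[\Acyc(R)\cap\Ch(\Inj(R))\bigr]$, in both cases with fibrant objects suitable complexes of injectives (indeed sharing their fibrant class with the central coderived-type structure); this sharing of classes with the middle node is precisely what makes the wing-commutativity and the exactness of the second row verifiable.

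Second, even granting a choice of companions, the butterfly conditions are asserted rather than proved. You write that the recollement axioms ``hold automatically once the identification is made,'' but identifying the six derived identity functors with Krause's inclusion, localization and their adjoints \emph{is} the content of the theorem: one must check that the derived horizontal identities are fully faithful, that the two rows of $\Ho(\bowtie)$ are exact, and that the wings commute. In the actual construction these follow from the cotorsion-pair calculus -- identification of the weakly trivial classes via the localization theorem (Theorem \ref{thm_gillespie}, i.e. \cite[Proposition 1.4.2]{Becker_ModelsForSingularityCategories}), exactness via the analogue of Proposition \ref{prop_locconcomp}, and orthogonality computations such as $\Hom_{\bfK(R)}(W,I)=0$ for $W$ coacyclic and $I\in\Ch(\Inj(R))$ -- none of which appears in your sketch. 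It is conceivable that the projective model structure could be forced into the $\bfD(R)$-corner after verifying all of (\ref{item:butterflyquillgen})--(\ref{item:butterflyrows2gen}) by hand, but you do not carry out those verifications, and no analogous shortcut can supply the missing $\bfK\lac(\Inj(R))$-companion. As it stands, the proposal is a correct outline of the strategy but does not yet prove the theorem.
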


For the definition of the model structures involved in the previous theorem, see loc.cit.

\begin{definition}\label{def_butterfly}
A diagram of abelian model structures and Quillen adjunctions of shape
\begin{align}\label{eq:butterflygen}\tag{$\bowtie$}
\begin{tikzpicture}[baseline,description/.style={fill=white,inner sep=2pt}]
    \matrix (m) [matrix of math nodes, row sep=1.8em,
                 column sep=4em, text height=1.5ex, text depth=0.25ex,
                 inner sep=0pt, nodes={inner xsep=0.3333em, inner ysep=0.3333em}]
    {
       \ \ \calM_l\ \  \pgfmatrixnextcell\pgfmatrixnextcell \ \ \calM_r\ \ \\
       \pgfmatrixnextcell \ \ \calM\ \  \pgfmatrixnextcell \\
       \ \ \calM_l\p\ \  \pgfmatrixnextcell\pgfmatrixnextcell \ \ \calM_r\p\ \ \\
    };
    \draw[->] ($(m-1-1.south) + (1mm,0)$) -- node[scale=0.75,right]{L} ($(m-3-1.north) + (1mm,0)$);
    \draw[->] ($(m-3-1.north) - (+1mm,0)$) -- node[scale=0.75,left]{R} ($(m-1-1.south) - (1mm,0)$);
    \draw[->] ($(m-1-3.south) + (1mm,0)$) -- node[scale=0.75,right]{R} ($(m-3-3.north) + (1mm,0)$);
    \draw[->] ($(m-3-3.north) + (-1mm,0)$) -- node[scale=0.75,left]{L} ($(m-1-3.south) - (1mm,0)$);
    \draw[->,shorten <=0.25cm] ($(m-1-1.south east) - (4mm,0)$) -- node[scale=0.75,below]{L} ($(m-2-2.north west) + (0mm,0)$);
    \draw[->,shorten <=0.25cm] ($(m-2-2.north west) + (4mm,0)$) -- node[scale=0.75,above]{R} ($(m-1-1.south east) - (0mm,0)$);
    \draw[->,shorten <=0.25cm] ($(m-3-1.north east) + (0mm,0)$) -- node[scale=0.75,below]{R} ($(m-2-2.south west) + (4mm,0)$);
    \draw[->,shorten >=0.25cm] ($(m-2-2.south west) + (0mm,0)$) -- node[scale=0.75,above]{L} ($(m-3-1.north east) - (4mm,0)$);
    \draw[->,shorten >=0.25cm] ($(m-2-2.north east) + (0mm,0)$) -- node[scale=0.75,below]{L} ($(m-1-3.south west) + (4mm,0)$);
    \draw[->,shorten >=0.25cm] ($(m-1-3.south west) + (0mm,0)$) -- node[scale=0.75,above]{R} ($(m-2-2.north east) - (4mm,0)$);
    \draw[->,shorten <=0.25cm] ($(m-2-2.south east) - (4mm,0)$) -- node[scale=0.75,below]{R} ($(m-3-3.north west) + (0mm,0)$);
    \draw[->,shorten <=0.25cm] ($(m-3-3.north west) + (4mm,0)$) -- node[scale=0.75,above]{L} ($(m-2-2.south east) + (0mm,0)$);
\end{tikzpicture}
\end{align}
is called a \emph{butterfly} if the following properties hold:
\begin{enumerate}[leftmargin=1.25cm,label=($\bowtie$.\roman*)]
\item\label{item:butterflyquillgen} The left and right vertical adjunctions are Quillen equivalences.
\item\label{item:butterflywingsgen} The two wings in the following following diagram commute:
\begin{equation}\label{eq:butterflyexactgen}\tag{$\Ho(\bowtie)$}
\begin{tikzpicture}[baseline,description/.style={fill=white,inner sep=2pt}]
    \matrix (m) [matrix of math nodes, row sep=1.8em,
                 column sep=4em, text height=1.5ex, text depth=0.25ex,
                 inner sep=0pt, nodes={inner xsep=0.3333em, inner ysep=0.3333em}]
    {
       \Ho(\calM_l) \pgfmatrixnextcell\pgfmatrixnextcell \Ho\left(\calM_r\right)\\
       \pgfmatrixnextcell \Ho(\calM) \pgfmatrixnextcell \\
       \Ho\left(\calM_l\p\right) \pgfmatrixnextcell\pgfmatrixnextcell \Ho\left(\calM\p_r\right)\\
    };
    \draw[->] (m-1-1) -- node[scale=.75,left]{$\cong$} node[scale=0.75,description]{$\bfL\id$} (m-3-1);
    \draw[->] (m-1-3) -- node[scale=.75,right]{$\cong$} node[scale=0.75,description]{$\bfR\id$} (m-3-3);
    \draw[->] (m-1-1) -- node[scale=0.75,description]{$\bfL\id$} (m-2-2);
    \draw[->] (m-2-2) -- node[scale=0.75,description]{$\bfR\id$} (m-3-3);
    \draw[->] (m-3-1) -- node[scale=0.75,description]{$\bfR\id$} (m-2-2);
    \draw[->] (m-2-2) -- node[scale=0.75,description]{$\bfL\id$} (m-1-3);
\end{tikzpicture}\end{equation}
\item\label{item:butterflyrows2gen} The derived horizontal adjunctions in \eqref{eq:butterflygen} realize the homotopy
  categories of the left and right sides as full subcategories of the homotopy category of the middle term, and the
  horizontal functors in \eqref{eq:butterflyexactgen} form an exact sequence.
\end{enumerate}
\end{definition}

\subsection{Generalization} We now describe our generalizations of Theorems
\ref{theorem_basicA}, \ref{theorem_basicB}, \ref{theorem_basicC}.

\subsubsection*{Setup} In the following, let $\scA$ be a Grothendieck abelian category and let $\calM=(\calC,\calW,\calF)$ be a
cofibrantly generated and hereditary (cgh) abelian model structure on $\scA$. In particular, $\calC\cap\calF$ is a
Frobenius category with respect to the short exact sequences inherited from $\scA$, its class of
projective-injective objects equals $\omega := \calC\cap\calW\cap\calF$, and the homotopy category
$\Ho(\calM)$ is canonically equivalent to the stable category $\calC\cap\calF/\calC\cap\calW\cap\calF$.
\vskip1mm
Recall the following basic theorem on Frobenius categories generalizing Theorem \ref{theorem_basicA}:

\begin{theorem}\label{theorem_frobeniusclassical}
Let $\scF$ be a Frobenius category and $\omega :=\ProjInj(\scF)$ its class of projective-injective objects. Then there
 are equivalences of triangulated categories
$$\cosyz^0, \syz^0: \bfK\lac(\omega)\xrightarrow{\quad}\ul{\scF}=\scF/\omega$$
which coincide up to shift, $\cosyz^0 \cong \Sigma\circ \syz^0$.
\end{theorem}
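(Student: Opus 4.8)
The plan is to construct the functors $\syz^0$ and $\cosyz^0$ from the syzygy and cosyzygy operations available in any Frobenius category, show they land in $\bfK\lac(\omega)$, and then produce explicit inverse functors. Recall that in a Frobenius category $\scF$, every object $X$ embeds into an injective-projective object $\omega^0(X)\in\omega$ with cokernel the cosyzygy $\Sigma X$, and dually admits an epimorphism from some object of $\omega$ with kernel the syzygy $\Omega X$; these are well-defined up to projective-injective summands, hence give the suspension and loop functors on $\ul{\scF}$. Iterating and splicing, one assigns to each $X$ a totally acyclic complex $\cdots\to\omega^{-1}(X)\to\omega^0(X)\to\omega^1(X)\to\cdots$ of objects of $\omega$ whose cycles are the iterated syzygies and cosyzygies of $X$ — this is a \emph{complete resolution}. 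First I would check that this assignment is functorial up to homotopy: a morphism $X\to Y$ lifts to a chain map of complete resolutions, unique up to homotopy, by the usual comparison-of-resolutions argument using that the terms are injective (for lifting on the cosyzygy side) and projective (for the syzygy side). This gives a functor $\scF\to\bfK\lac(\omega)$ killing $\omega$, hence a functor $\ul{\scF}\to\bfK\lac(\omega)$; call it $\calR$.

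Next I would define $\syz^0,\cosyz^0:\bfK\lac(\omega)\to\ul{\scF}$. Given an acyclic complex $P^\bullet$ of objects of $\omega$, set $\syz^0(P^\bullet):=\Z^0(P^\bullet)=\ker(P^0\to P^1)$, the $0$-th cycle object, and $\cosyz^0(P^\bullet):=\Z^1(P^\bullet)=\coker(P^{-1}\to P^0)$; both are objects of $\scF$, passed to $\ul{\scF}$. A null-homotopic complex of projective-injectives is contractible, so its cycle objects are projective-injective (direct summands of the terms), hence zero in $\ul{\scF}$; and a chain homotopy between chain maps induces, on $0$-th cycles, a map factoring through a term of $\omega$, so the induced map in $\ul{\scF}$ depends only on the homotopy class. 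This makes $\syz^0$ and $\cosyz^0$ well-defined functors. The relation $\cosyz^0\cong\Sigma\circ\syz^0$ is immediate: the short exact sequence $0\to\Z^0(P^\bullet)\to P^0\to\Z^1(P^\bullet)\to 0$ exhibits $\Z^1$ as a cosyzygy of $\Z^0$, i.e.\ $\cosyz^0(P^\bullet)\cong\Sigma\,\syz^0(P^\bullet)$ in $\ul{\scF}$, naturally in $P^\bullet$.

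It then remains to show these are mutually inverse equivalences. For $\syz^0\circ\calR\cong\id_{\ul{\scF}}$: the $0$-th cycle of the complete resolution of $X$ is $X$ itself by construction, so the composite is the identity on objects, and naturality follows from the lifting construction. For $\calR\circ\syz^0\cong\id_{\bfK\lac(\omega)}$: given $P^\bullet$, the complete resolution of $\Z^0(P^\bullet)$ is built from the same syzygy/cosyzygy data that $P^\bullet$ already displays, so there is a comparison chain map $\calR(\Z^0(P^\bullet))\to P^\bullet$ which is an isomorphism on all cycle objects, hence a quasi-isomorphism between acyclic complexes of projective-injectives, hence a homotopy equivalence (a quasi-isomorphism of bounded-below—or more carefully, of $\bfK\lac(\omega)$, where acyclic complexes of projectives that are quasi-isomorphic are homotopy equivalent). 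Finally the triangulated structure: $\bfK\lac(\omega)$ carries its usual shift, and one checks $\syz^0$ intertwines it with $\Sigma^{-1}$ on $\ul{\scF}$ (equivalently $\cosyz^0$ intertwines shift with $\Sigma$), and sends the mapping-cone triangles of $\bfK\lac(\omega)$ to the standard triangles of $\ul{\scF}$ coming from short exact sequences; this last compatibility is the routine but slightly technical verification that the cone of a chain map of complete resolutions computes the cosyzygy of the corresponding extension.

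The main obstacle I anticipate is the well-definedness and functoriality of the complete resolution — i.e.\ showing $\calR$ is a genuine functor to the \emph{homotopy} category, with the naturality isomorphisms $\syz^0\circ\calR\cong\id$ and $\calR\circ\syz^0\cong\id$ being compatible (the triangle identities), since this requires pinning down lifts of morphisms and of homotopies through two families of universal properties (injectivity above degree $0$, projectivity below). The purely triangulated bookkeeping at the end is comparatively mechanical once the underlying additive equivalence is in place.
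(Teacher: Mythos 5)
The paper itself does not prove this theorem (it is recalled as classical background), so your proposal has to stand on its own; its overall architecture -- a complete-resolution functor $\mathcal{R}\colon\ul{\scF}\to\bfK\lac(\omega)$, the cycle functors $\syz^0=\Z^0$ and $\cosyz^0=\Z^1$, well-definedness on homotopy classes via factorization through a term in $\omega$, and the shift relation from $0\to\Z^0(P)\to P^0\to\Z^1(P)\to 0$ -- is exactly the standard argument and is fine. One small caveat: for a general Frobenius category, ``acyclic'' must be read as ``spliced from conflations of $\scF$,'' so the cycle objects are part of the data of the exact structure rather than kernels/cokernels in an ambient abelian category; your constructions go through verbatim with that reading.

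The genuine problem is your justification of $\mathcal{R}\circ\syz^0\cong\id_{\bfK\lac(\omega)}$. You assert that the comparison map $\mathcal{R}(\Z^0(P))\to P$ is an isomorphism on all cycle objects and is therefore ``a quasi-isomorphism between acyclic complexes of projective-injectives, hence a homotopy equivalence.'' Neither clause holds: the cycles of $\mathcal{R}(\Z^0(P))$ agree with those of $P$ only up to projective-injective summands (Schanuel), not on the nose; and between acyclic complexes \emph{every} chain map (including zero) is a quasi-isomorphism, while acyclic complexes of projective-injectives are in general \emph{not} contractible -- indeed the non-vanishing of $\bfK\lac(\omega)$ is the whole content of the theorem -- so ``quasi-isomorphism implies homotopy equivalence'' is exactly the principle that fails here (and in a general exact category quasi-isomorphisms are not even available). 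The correct, and standard, repair is the two-sided version of the comparison argument you already invoked for functoriality: any $P\in\bfK\lac(\omega)$ is itself a complete resolution of $\Z^0(P)$, so lift $\id_{\Z^0(P)}$ to chain maps $\mathcal{R}(\Z^0(P))\to P$ and $P\to\mathcal{R}(\Z^0(P))$, using injectivity of the terms in non-negative degrees and projectivity in negative degrees; both composites lift the identity and are therefore homotopic to the identity by the uniqueness-up-to-homotopy of such lifts. This yields the natural isomorphism $\mathcal{R}\circ\syz^0\cong\id$ directly, after which your remaining steps (triangulated structure, $\cosyz^0\cong\Sigma\circ\syz^0$) are the routine verifications you describe.
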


This suggests that a lifting of $\calM$ to a Quillen equivalent model structure on $\Ch(\scA)$ could be obtained by
providing a choice for an abelian model structure $\wt{\calM}$ on $\Ch(\scA)$ such that the class of acyclic complexes
with values in $\omega := \calC\cap\calW\cap\calF$ is equal to the class of bifibrant objects in $\wt{\calM}$. The base
for implementing this idea will be the following Theorem of Gillespie extending the author's previous work:

\begin{theorem}[\protect{\cite[Proposition 1.4.2]{Becker_ModelsForSingularityCategories}, \cite[Theorem 1.1]{Gillespie_GeneralLocalization}}]\label{thm_gillespie}
Let $\scA$ be an abelian category and $(\calQ,\wt{\calR})$ and $(\wt{\calQ},\calR)$ be complete (small), hereditary
cotorsion pairs over $\scA$ with $\wt{\calQ}\subseteq\calQ$ and $\calQ\cap\wt{\calR}=\wt{\calQ}\cap\calR$. Then there
exists a unique (cofibrantly generated) abelian model structure $(\calQ,\calW,\calR)$, and its class $\calW$ of weakly
trivial objects is given by
\begin{align}\label{eq:thmgillespiew}
\calW = \{X\in\scA\ |\ \exists\ 0\to X\to\wt{\calR}\to\wt{\calQ}\to 0\quad\text{and}\quad 0\to\wt{\calR}\to\wt{\calQ}\to
X\to 0\}.
\end{align}
\end{theorem}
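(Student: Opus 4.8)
The plan is to verify the hypotheses of \emph{Hovey's correspondence} \cite{Hovey_Cotorsion}: on an abelian category, an abelian model structure $(\calC,\calW,\calF)$ is the same datum as a thick class $\calW$ (closed under retracts and satisfying the two-out-of-three property for short exact sequences) together with two complete cotorsion pairs $(\calC,\calW\cap\calF)$ and $(\calC\cap\calW,\calF)$; it is hereditary precisely when these pairs are, and cofibrantly generated when they are generated by sets. So, taking $\calC:=\calQ$ and $\calF:=\calR$, it will suffice to show that the class $\calW$ of \eqref{eq:thmgillespiew} is thick and satisfies $\calQ\cap\calW=\wt\calQ$ and $\calW\cap\calR=\wt\calR$; then $(\calC,\calW\cap\calF)=(\calQ,\wt\calR)$ and $(\calC\cap\calW,\calF)=(\wt\calQ,\calR)$ are the two given cotorsion pairs, and Hovey's theorem produces an abelian model structure, hereditary and cofibrantly generated because the two pairs are. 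Uniqueness comes for free once the other points are established: the same two intersection identities, which are forced for the weak class of \emph{any} abelian model structure with cofibrant class $\calQ$ and fibrant class $\calR$, combined with thickness, pin that weak class down to the class $\calW$ of \eqref{eq:thmgillespiew} (a short two-out-of-three argument against a special $\calR$-preenvelope and a special $\calQ$-precover).

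The first real step is some bookkeeping. Since $\wt\calQ\subseteq\calQ$ one has $\wt\calR\subseteq\calR$, and the hypothesis $\calQ\cap\wt\calR=\wt\calQ\cap\calR$ yields a single class $\omega:=\calQ\cap\wt\calR=\wt\calQ\cap\calR=\wt\calQ\cap\wt\calR$, closed under direct summands. Heredity of the cotorsion pairs makes $\wt\calQ$ closed under kernels of epimorphisms between its objects and $\wt\calR$ closed under cokernels of monomorphisms between its objects, and all four classes are extension-closed. Granting this, one proves $\calW\cap\calR=\wt\calR$ as follows. For $\wt\calR\subseteq\calW$: given $X\in\wt\calR$, the identity $X\xrightarrow{\ =\ }X$ gives the first sequence of \eqref{eq:thmgillespiew}; for the second, splice a special $\wt\calQ$-precover $0\to R_0\to\wt Q_0\to X\to 0$ from $(\wt\calQ,\calR)$ with a special $\wt\calR$-preenvelope $0\to R_0\to\wt R_0\to Q_1\to 0$ from $(\calQ,\wt\calR)$ via the pushout $P$ along $R_0\to\wt R_0$; the rows $0\to\wt R_0\to P\to X\to 0$ and $0\to\wt Q_0\to P\to Q_1\to 0$ exhibit $P$ both in $\wt\calR$ and in $\calQ$, hence $P\in\omega\subseteq\wt\calQ$, so the first row is the desired second sequence. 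For $\calW\cap\calR\subseteq\wt\calR$: given $X\in\calW\cap\calR$, in the second sequence $0\to\wt R'\to\wt Q'\to X\to 0$ the middle term is an extension of $X\in\calR$ by $\wt R'\in\calR$, so $\wt Q'\in\wt\calQ\cap\calR=\omega\subseteq\wt\calR$, and $X$ becomes a cokernel of a monomorphism between objects of $\wt\calR$, hence $X\in\wt\calR$. The dual arguments (pullbacks in place of pushouts) give $\wt\calQ\subseteq\calW$ and $\calQ\cap\calW=\wt\calQ$; in particular $\omega$, $\wt\calQ$ and $\wt\calR$ all lie in $\calW$.

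What remains, and is the main obstacle, is thickness of $\calW$. Retract-closedness is easy once two-out-of-three is available: for $X$ a retract of $Y\in\calW$, write $Y\cong X\oplus X'$ and pick special $\calR$-preenvelopes $0\to X\to R_X\to\wt Q_X\to 0$ and $0\to X'\to R_{X'}\to\wt Q_{X'}\to 0$ from $(\wt\calQ,\calR)$; since $\wt Q_X,\wt Q_{X'}\in\wt\calQ\subseteq\calW$, two-out-of-three gives $X\in\calW\iff R_X\in\calW\iff R_X\in\calW\cap\calR=\wt\calR$ and, applied to $Y$ with the preenvelope $R_X\oplus R_{X'}$, likewise $Y\in\calW\iff R_X\oplus R_{X'}\in\wt\calR$; as $Y\in\calW$, summand-closedness of $\wt\calR$ forces $R_X\in\wt\calR$, hence $X\in\calW$. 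For two-out-of-three itself, given a short exact sequence $0\to A\to B\to C\to 0$ with two terms in $\calW$, one must construct the two sequences of \eqref{eq:thmgillespiew} for the third; the plan is to splice the defining sequences of the two known terms along $0\to A\to B\to C\to 0$ by a generalized horseshoe construction (iterated pushouts and pullbacks) and to dimension-shift on the resulting short exact sequences of cokernels, absorbing at each stage any auxiliary object that lands in two of $\calQ,\wt\calQ,\calR,\wt\calR$ into $\omega=\calQ\cap\wt\calR=\wt\calQ\cap\calR$. The delicate point, and the place where the compatibility hypothesis is genuinely used, is that the horseshoe construction for a cotorsion pair requires an $\ext^1$-vanishing that is available only when the relevant kernel or cokernel already lies in the appropriate half of a cotorsion pair; so the three cases of two-out-of-three must be treated in a suitable order, in each of them first replacing the offending $\calW$-term by a term in $\calQ$ (resp.\ $\calR$) through a special $\calQ$-precover (resp.\ $\calR$-preenvelope), at the cost of an error term in $\wt\calR$ (resp.\ $\wt\calQ$) that is disposed of by an already-established case. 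Once this bookkeeping is carried through for all three cases and both sequences in \eqref{eq:thmgillespiew}, thickness of $\calW$ is established, and Hovey's correspondence yields the cofibrantly generated hereditary abelian model structure $(\calQ,\calW,\calR)$ with $\calW$ as in \eqref{eq:thmgillespiew}.
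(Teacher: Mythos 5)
Your overall strategy is the right one, and it is the strategy of the sources the paper quotes (the paper itself gives no proof of this theorem, citing Becker's Proposition 1.4.2 and Gillespie's Theorem 1.1): define $\calW$ by \eqref{eq:thmgillespiew}, show it is thick with $\calQ\cap\calW=\wt{\calQ}$ and $\calW\cap\calR=\wt{\calR}$, and feed this into Hovey's correspondence. Your bookkeeping is correct ($\wt{\calR}\subseteq\calR$, the common core $\omega$), your pushout argument for $\wt{\calR}\subseteq\calW$ and the extension-plus-heredity argument for $\calW\cap\calR\subseteq\wt{\calR}$ are fine, as are their duals; the retract argument via special $\calR$-preenvelopes and summand-closedness of $\wt{\calR}$ is a correct reduction to two-out-of-three, and the uniqueness sketch (the intersection identities are forced by Hovey's correspondence, and together with thickness they determine the weak class) is also fine.

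The genuine gap is the two-out-of-three property of $\calW$ itself. This is the mathematical heart of the theorem -- it is exactly where the compatibility hypothesis $\calQ\cap\wt{\calR}=\wt{\calQ}\cap\calR$ and the heredity of both pairs are put to work, and it is the step that separates the general statement (Gillespie) from the more restrictive earlier versions -- yet your proposal only announces a plan for it: a ``generalized horseshoe construction,'' dimension shifting, cases ``treated in a suitable order,'' with error terms ``disposed of by an already-established case.'' None of the three cases is actually carried out, the order of cases is not specified (so possible circularity among them is not excluded), and the splicing you describe requires precisely the $\ext^1$-vanishing statements whose availability you yourself flag as the delicate point; one cannot certify that the absorption of auxiliary objects into $\omega$ goes through without writing down the diagrams. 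In Gillespie's proof this step consists of several nontrivial pushout/pullback arguments establishing closure of $\calW$ under the various positions in a short exact sequence, and everything you use later (retracts, hence Hovey's hypothesis of thickness) depends on it. As written, the proposal proves everything except the hard part, so it does not yet constitute a proof of the theorem.
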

The situation of Theorem \ref{thm_gillespie} turns out to be abundant and deserves its own name:
\begin{notation}\label{notation_locpair}
We write $(\calD\p,\calE\p)\locarrow{}(\calD,\calE)$, or $(\calD\p,\calE\p)\locarrow{\alpha}(\calD,\calE)$, as an abbreviation for $(\calD\p,\calE\p)$ and $(\calD,\calE)$ being complete and hereditary cotorsion pairs
having the same core $\calD\p\cap\calE\p=\calD\cap\calE$ and satisfying $\calD\p\subset\calD$. Such a situation will be
called a \emph{localization context}. Its induced model structure from Theorem \ref{thm_gillespie} is denoted
$\Loc(\alpha) := (\calD,?,\calE\p)$ on $\scA$ and called its \emph{localization}.
\end{notation}

Leaving the details for later, the technical heart of this article is the following zoology of localization contexts
induced by a single cgh abelian model structure. For the notation, we refer to Definition \ref{def_classes}.

\begin{maintheorem}\label{thm_bunch}
Let $\calM=(\calC,\calW,\calF)$ be a cofibrantly generated, hereditary abelian model structure on the Grothendieck
category $\scA$ with core $\omega := \calC\cap\calW\cap\calF$. Then Figure \ref{fig_bunch} shows a diagram of
localization contexts on $\Ch(\scA)$ with common core.

An arrow $(\calD\p,\calE\p)\to(\calD,\calE)$ signifies (independent of its style) that $\calD\p\subseteq\calD$,
and $\perp$ indicates that the corresponding entry is the left/right orthogonal of the other entry.
\end{maintheorem}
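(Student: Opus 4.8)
The plan is to read off every cotorsion pair of Figure~\ref{fig_bunch} from the two cotorsion pairs that Hovey's correspondence attaches to $\calM$: one lifts those to $\Ch(\scA)$ by Gillespie's recipe, produces the remaining ``singular'' nodes by intersecting with the class $\Acyc(\scA)$ of acyclic complexes and by passing to orthogonals, and then checks completeness, heredity, and that every resulting cotorsion pair has core $\wt{\omega}$.

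To start, I would replace $\calM=(\calC,\calW,\calF)$ by the complete hereditary cotorsion pairs $\mathfrak p=(\calC,\calW\cap\calF)$ and $\mathfrak i=(\calC\cap\calW,\calF)$ on $\scA$; since $\calM$ is cofibrantly generated, both are cogenerated by a set, and both have core $\omega$. To a complete hereditary cotorsion pair $(\calX,\calY)$ on $\scA$ cogenerated by a set, Gillespie's machinery assigns the pairs $(\wt{\calX},\dg\wt{\calY})$ and $(\dg\wt{\calX},\wt{\calY})$ on $\Ch(\scA)$, which are complete --- by the Eklof--Trlifaj small object argument in the Grothendieck category $\Ch(\scA)$ --- hereditary, satisfy $\wt{\calX}\subseteq\dg\wt{\calX}$ and $\wt{\calX}\subseteq\Acyc(\scA)$, and share a common core that a degreewise-splitting chase identifies with the class of acyclic complexes all of whose cycles lie in $\calX\cap\calY$. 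Applied to $\mathfrak p$ and to $\mathfrak i$ this produces four of the nodes --- each with core $\wt{\omega}$, since $\calC\cap\calW\cap\calF=\omega$ --- together with the two vertical localization contexts, the $\perp$-labels internal to those four pairs, and comparison arrows such as $\dg\wt{\calC\cap\calW}\subseteq\dg\wt{\calC}$, which follow termwise from $\calC\cap\calW\subseteq\calC$.

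The remaining nodes are the singular ones, whose left or right class is an intersection with $\Acyc(\scA)$ (or a degreewise class of complexes) or an orthogonal of a class already built. For these I would take completeness directly from Eklof--Trlifaj, observing that each is cogenerated by the union of the generating sets of finitely many previously constructed pairs, and record the localization-context structure via Theorem~\ref{thm_gillespie}, whose two hypotheses --- the inclusion of left classes and the equality of cores --- are to be verified at each step. The cores are computed uniformly: one has $\wt{\omega}\subseteq\calD\cap\calE$ at every node because $\omega\subseteq\calC$ puts $\wt{\omega}$ into every left class and $\omega\subseteq\calW\cap\calF$, $\omega\subseteq\calF$ put it into every right class; conversely $\calD\cap\calE\subseteq\wt{\omega}$ because in each localization context of the figure one of $\calD,\calE$ is a class of acyclic complexes, so that --- using Gillespie's identities for the $\wt{(-)}$ and $\dg\wt{(-)}$ classes together with the heredity of $\mathfrak p$ and $\mathfrak i$ --- an object of $\calD\cap\calE$ is an acyclic complex with cycles in an intersection of two of the classes $\calC$, $\calW\cap\calF$, $\calC\cap\calW$, $\calF$, and every such intersection is $\omega$, this being precisely the assertion that $\omega$ is the common core of $\mathfrak p$ and $\mathfrak i$. (Such a complex is moreover contractible, as $\ext^1_{\scA}(\calC,\calW\cap\calF)=0$ splits every short exact sequence with all three terms in $\omega$.)

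What then remains is bookkeeping: each arrow $(\calD\p,\calE\p)\to(\calD,\calE)$ is the inclusion $\calD\p\subseteq\calD$, visible termwise from $\calC\cap\calW\subseteq\calC$, from $\wt{\calX}\subseteq\dg\wt{\calX}$ and from $\Acyc(\scA)\cap\calX\subseteq\calX$, and each $\perp$-label records orthogonality inside a pair already produced. I expect the real obstacle to be the completeness and the cogeneration-by-a-set of the singular pairs: after intersecting with $\Acyc(\scA)$ it is not transparent that one remains within the reach of Eklof--Trlifaj, and the clean remedy is never to leave the framework of Theorem~\ref{thm_gillespie} --- obtaining each singular pair as a pair occurring in the localization of an already-constructed localization context --- so that the whole of the work is concentrated in verifying that theorem's two hypotheses at each stage: the inclusion of left classes, and, above all, the equality of cores, which is the homological computation sketched above and the one point at which heredity of $\calM$ is used in an essential way.
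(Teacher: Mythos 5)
Your outline matches the paper's strategy in broad strokes (start from the two Hovey cotorsion pairs $(\calC,\calW\cap\calF)$ and $(\calC\cap\calW,\calF)$, lift them to $\Ch(\scA)$ à la Gillespie--Stovicek, then treat the remaining nodes and compute cores), but the step you yourself flag as the obstacle is in fact the technical heart of the theorem, and your proposed remedy for it does not work. For the upper square your idea is essentially right: there the paper shows each right-hand class is $\calS^{\perp}$ for a generating set, using $\wt{\calF}=[\dg\wt{\calC\cap\calW}]^{\perp}$, $\dg\wt{\calF}=[\wt{\calC\cap\calW}]^{\perp}$ and $\dw\calW\cap\calF=G^{+}(\calS)^{\perp}$ for the exact adjoint $G^{+}$ of the forgetful functor, and intersections of such classes are again of this form. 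But for the lower square the \emph{left} class is prescribed, e.g.\ $\dg\wt{\calC}\cap\dw\calC\cap\calW$, and cogenerating ``by a union of sets'' gives a pair whose left half is ${}^{\perp}(\calS^{\perp})$ with no control that it equals the prescribed intersection. What is actually needed is that these intersections are generating (they contain $G^{+}(\calC\cap\calW)$) and deconstructible, via Stovicek's results that $\wt{\calC}$, $\dg\wt{\calC}$ and $\dw(\calC\cap\calW)$ are deconstructible and that deconstructible classes are closed under intersection; you never invoke deconstructibility. Moreover, your ``clean remedy'' is circular: by Hovey's correspondence the two cotorsion pairs attached to the localization $\Loc(\alpha)=(\calD,?,\calE\p)$ of a context $(\calD\p,\calE\p)\to(\calD,\calE)$ are exactly the two input pairs, so ``pairs occurring in the localization of an already-constructed localization context'' never produce any of the singular pairs --- Theorem \ref{thm_gillespie} consumes complete cotorsion pairs, it does not manufacture new ones.

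The core computation also rests on an unproved claim, namely that in every pair of the figure one of the two classes consists of acyclic complexes. That is clear in the middle square, but for the orthogonally defined entries such as ${}^{\perp}[\dg\wt{\calF}\cap\dw\calW\cap\calF]$ or $[\dg\wt{\calC}\cap\dw\calC\cap\calW]^{\perp}$ it is precisely one of the arrows of the figure still to be established (e.g.\ ${}^{\perp}[\dg\wt{\calF}\cap\dw\calW\cap\calF]\subseteq\wt{\calC}$), so your argument begs part of the statement. The paper avoids this: it only traps each core inside $\dw\omega$ (the easy arrow inclusions bound left sides by $\dw\calC$ resp.\ $\dw\calC\cap\calW$ and right sides by $\dw\calW\cap\calF$ resp.\ $\dw\calF$), and then applies Lemma \ref{lem_doubleorthogonalagain}: a self-orthogonal, $\Sigma$-stable class contained in $\dw\omega$ consists of contractible complexes, hence lies in $\wt{\omega}$; no acyclicity of the orthogonal classes is ever needed. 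Finally, heredity of the singular pairs (right sides coresolving, resp.\ left sides resolving, plus the hereditary criterion cited from \cite[Corollary 1.1.12]{Becker_ModelsForSingularityCategories}) is announced in your plan but never carried out; it is short, but it is part of what ``localization context'' requires.
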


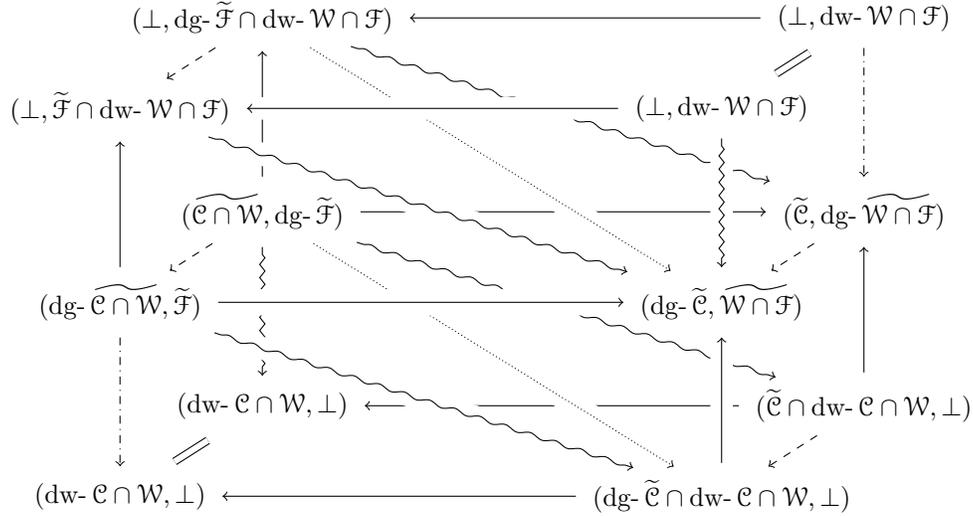
\begin{figure}[h]
\begin{equation*}\begin{tikzpicture}[scale=0.86,baseline,description/.style={fill=white,inner sep=2pt}]
    \begin{scope}[xshift=2.2cm,yshift=-0.1cm,yscale=3]
      \node (m-1-2) at (0,0) {$(\perp,\dg\wt{\calF}\cap\dw\calW\cap\calF)$};
      \node (m-3-2) at (0,-1) {$(\wt{\calC\cap\calW},\dg\wt{\calF})$};
      \node (m-5-2) at (0,-2) {$(\dw\calC\cap\calW,\perp)$};
    \end{scope}
    \begin{scope}[xshift=0cm,yshift=-1.5cm,yscale=3]
      \node (m-2-1) at (0,0) {$(\perp,\wt{\calF}\cap\dw\calW\cap\calF)$};
      \node (m-4-1) at (0,-1) {$(\dg\wt{\calC\cap\calW},\wt{\calF})$};
      \node (m-6-1) at (0,-2) {$(\dw\calC\cap\calW,\perp)$};
    \end{scope}
    \begin{scope}[xshift=11.5cm,yshift=-0.1cm,yscale=3]
      \node (m-1-4) at (0,0) {$(\perp,\dw\calW\cap\calF)$};
      \node (m-3-4) at (0,-1) {$(\wt{\calC},\dg\wt{\calW\cap\calF})$};
      \node (m-5-4) at (0,-2) {$(\wt{\calC}\cap\dw\calC\cap\calW,\perp)$};
    \end{scope}
    \begin{scope}[xshift=9.3cm,yshift=-1.5cm,yscale=3]
      \node (m-2-3) at (0,0) {$(\perp,\dw\calW\cap\calF)$};
      \node (m-4-3) at (0,-1) {$(\dg\wt{\calC},\wt{\calW\cap\calF})$};
      \node (m-6-3) at (0,-2) {$(\dg\wt{\calC}\cap\dw\calC\cap\calW,\perp)$};
    \end{scope}
    \draw[shorten >=0.12cm,shorten <=0.12cm,->,decorate,decoration={snake,amplitude=.25mm, pre length=2mm, post
      length=2mm}] (m-1-2) -- node[near start,above,scale=0.75]{$$} (m-3-4);
    \draw[shorten >=0.12cm,shorten <=0.12cm,->,decorate,decoration={snake,amplitude=.25mm, pre length=2mm, post
      length=2mm}] (m-3-2) -- node[near start,above,scale=0.75]{$$} (m-5-4);
    \draw[shorten >=0.1cm,shorten <=0.1cm,->] (m-1-4) -- node[above,scale=0.75]{$$} (m-1-2);
    \draw[shorten >=0.1cm,shorten <=0.1cm,->] (m-3-2) -- node[near start,above,scale=0.75]{$$} (m-3-4);
    \draw[shorten >=0.1cm,shorten <=0.1cm,->] (m-5-4) -- node[near end, above,scale=0.75]{$$} (m-5-2);
    \draw[shorten >=0.1cm,shorten <=0.1cm,->] (m-3-2) -- node[near end,right,scale=0.75]{$$} (m-1-2);
    \draw[shorten >=0.1cm,shorten <=0.1cm,->,decorate,decoration={zigzag,amplitude=.35mm,segment length=5pt, post
      length=2mm, pre length=2mm}] (m-3-2) -- node[near start,right,scale=0.75]{$$} (m-5-2);
    \draw[shorten >=0.1cm,shorten <=0.1cm,->,dashdotted] (m-1-4) -- node[near start,right,scale=0.75]{$$} (m-3-4);
    \draw[shorten >=0.1cm,shorten <=0.1cm,->] (m-5-4) -- node[near end,right,scale=0.75]{$$} (m-3-4);
    \draw[shorten >=0.1cm,shorten <=0.1cm,line width=3mm,fill,white] (m-1-2) -- (m-4-3);
    \draw[shorten >=0.1cm,shorten <=0.1cm,->,densely dotted] (m-1-2) -- node[above,scale=0.75]{$$} (m-4-3);
    \draw[shorten >=0.1cm,shorten <=0.1cm,line width=3mm,fill,white] (m-3-2) -- (m-6-3);
    \draw[shorten >=0.1cm,shorten <=0.1cm,densely dotted,->] (m-3-2) -- node[above,scale=0.75]{$$} (m-6-3);
    \draw[shorten >=0.1cm,shorten <=0.1cm,->,dashed] (m-1-2) -- node[above=2mm,left,scale=0.75]{$$} (m-2-1);
    \draw[shorten >=0.1cm,shorten <=0.1cm,->,dashed] (m-3-2) -- node[above=2mm,left,scale=0.75]{$$} (m-4-1);
    \draw[shorten >=0.3cm,shorten <=0.3cm,double,double distance=0.8mm] (m-5-2) -- (m-6-1);
    \draw[shorten >=0.1cm,shorten <=0.1cm,->,dashed] (m-5-4) -- node[below=2mm,right,scale=0.75]{$$} (m-6-3);
    \draw[shorten >=0.1cm,shorten <=0.1cm,->,dashed] (m-3-4) -- node[below=2mm,right,scale=0.75]{$$} (m-4-3);
    \draw[shorten >=0.3cm,shorten <=0.3cm,double,double distance=0.8mm] (m-1-4) -- (m-2-3);
    \draw[shorten >=0.1cm,shorten <=0.1cm,->] (m-4-1) -- node[near end,left,scale=0.75]{$$} (m-2-1);
    \draw[shorten >=0.1cm,shorten <=0.1cm,->,dashdotted] (m-4-1) -- node[near start,left,scale=0.75]{$$} (m-6-1);
    \draw[line width=3mm,fill,white] (m-2-3) -- (m-4-3);
    \draw[shorten >=0.1cm,shorten <=0.1cm,->,decorate,decoration={zigzag,amplitude=.35mm,segment length=5pt, post
      length=2mm, pre length=2mm}] (m-2-3) -- node[near start,right,scale=0.75]{$$} (m-4-3);
    \draw[line width=3mm,fill,white] (m-6-3) -- (m-4-3);
    \draw[shorten >=0.1cm,shorten <=0.1cm,->] (m-6-3) -- node[near end,right,scale=0.75]{$$} (m-4-3);
    \draw[line width=3mm,fill,white] (m-6-3) -- (m-6-1);
    \draw[shorten >=0.1cm,shorten <=0.1cm,->] (m-6-3) -- node[near start,above,scale=0.75]{$$} (m-6-1);
    \draw[line width=3.5mm,fill,white] (m-4-1) -- (m-4-3);
    \draw[shorten >=0.1cm,shorten <=0.1cm,->] (m-4-1) -- node[near start,above,scale=0.75]{$$} (m-4-3);
    \draw[line width=3mm,fill,white] (m-2-3) -- (m-2-1);
    \draw[shorten >=0.1cm,shorten <=0.1cm,->] (m-2-3) -- node[near end,above,scale=0.75]{$$} (m-2-1);
    \draw[line width=3mm,fill,white] (m-2-1) -- (m-4-3);
    \draw[shorten >=0.12cm,shorten <=0.12cm,->,decorate,decoration={snake,amplitude=.25mm, pre length=2mm, post
      length=2mm}] (m-2-1) -- node[near start,above,scale=0.75]{$$} (m-4-3);
    \draw[line width=3mm,fill,white] (m-6-3) -- (m-4-1);
    \draw[shorten >=0.12cm,shorten <=0.12cm,->,decorate,decoration={snake,amplitude=.25mm, pre length=2mm, post
      length=2mm}] (m-4-1) -- node[near start,above,scale=0.75]{$$} (m-6-3);
\end{tikzpicture}\end{equation*}
\caption{Localization contexts on $\Ch(\scA)$ induced by a cofibrantly generated, hereditary abelian model structure on $\scA$}\label{fig_bunch}
\end{figure}

With the multitude of localization contexts from Theorem \ref{thm_bunch} and each of them inducing an abelian model
structure on $\Ch(\scA)$ by virtue of Theorem \ref{thm_gillespie}, we end up with a large number of abelian model
structures on $\Ch(\scA)$. The following helps gaining a rough understanding on their interrelation and will be
elaborated later:

\begin{enumerate}[leftmargin=1cm,label=(\roman*)]
\item\label{item:Btriangle} Each triangle $\ortriangle{0.4}{1mm}$ in Figure \ref{fig_bunch} gives rise to a localization sequence between the three
  model structures induced by its edges.
\item\label{item:Bsquare} Each square $\orsquare{0.4}{1mm}$ in Figure \ref{fig_bunch} yields a butterfly-shaped diagram of adjunctions
  between the two localization sequences associated via \ref{item:Btriangle} to its triangle faces. However, these are not necessarily butterflies.
\end{enumerate}

Using this, we get the desired generalization of Theorem \ref{theorem_basicB}:

\begin{maintheorem}\label{thm_chainbutterfly}
For a cofibrantly generated and hereditary abelian model structure $\calM=(\calC,\calW,\calF)$ over a Grothendieck
category $\scA$, consider Figure \ref{fig_chainbutterfly}. It shows a diagram of identity Quillen adjunctions between
cofibrantly generated and hereditary abelian model structures on $\Ch(\scA)$ with the following properties:
\begin{enumerate}
\item Both horizontal layers are butterflies in the sense of Definition \ref{def_butterfly}.
\item All the vertical adjunctions are Quillen equivalences.
\item\label{thm_chainbutterfly_it3} The four model structures on the left side are Quillen equivalent to $\calM$.
\item The four model structures on the right side hand have their homotopy category canonically equivalent
to $\bfD(\scA)$.
\end{enumerate}
\end{maintheorem}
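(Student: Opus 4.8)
The plan is to treat Theorem~\ref{thm_bunch} as having already carried out all the cotorsion-theoretic bookkeeping, so that Figure~\ref{fig_chainbutterfly} is obtained as the image of a sub-diagram of Figure~\ref{fig_bunch} under the passage ``localization context $\leadsto$ abelian model structure'' of Theorem~\ref{thm_gillespie}. First I would isolate inside Figure~\ref{fig_bunch} two parallel ``square prisms'', each a pair of triangle faces sharing an edge; by item~\ref{item:Btriangle} the $\Loc(-)$-image of such a prism is a pair of localization sequences of hereditary abelian model structures on $\Ch(\scA)$ sharing their common middle term, which is exactly the $5$-object shape \eqref{eq:butterflygen}, and the identity adjunctions joining the two prisms in Figure~\ref{fig_bunch} become the vertical adjunctions of Figure~\ref{fig_chainbutterfly}. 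Cofibrant generation of the resulting model structures is part of Theorem~\ref{thm_gillespie} once one knows that the cotorsion pairs of Figure~\ref{fig_bunch} are cogenerated by sets (which holds because $\scA$ is Grothendieck), and heredity is inherited from that of those cotorsion pairs; so everything in Figure~\ref{fig_chainbutterfly} is an identity Quillen adjunction of cofibrantly generated hereditary abelian model structures.

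For conclusion~(1) I would verify the three axioms of Definition~\ref{def_butterfly} for each of the two horizontal layers; these are precisely the two square prisms for which the butterfly-shaped diagram of item~\ref{item:Bsquare} is a genuine butterfly. Axiom~\ref{item:butterflyrows2gen} is essentially item~\ref{item:Btriangle}: a localization sequence of triangulated categories realizes its two outer terms as full subcategories of the middle one and is in particular an exact sequence, and amalgamating the two localization sequences of a prism along their shared middle term produces the two exact rows of \eqref{eq:butterflyexactgen}. Axiom~\ref{item:butterflywingsgen} is then formal: every functor in \eqref{eq:butterflyexactgen} is a derived identity functor, and after passing to the common core of the localization contexts all composites in the two wings are computed by one and the same bifibrant replacement, hence agree up to canonical isomorphism. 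Axiom~\ref{item:butterflyquillgen} --- that the vertical adjunctions are Quillen equivalences --- is precisely conclusion~(2), which I treat next.

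Conclusion~(2) is the technical core, and I expect it to be the main obstacle. For an identity Quillen adjunction between hereditary abelian model structures on $\Ch(\scA)$, being a Quillen equivalence is equivalent to the derived identity being an equivalence of homotopy categories; since those homotopy categories are the stable categories of the Frobenius categories of bifibrant objects, I would verify this by computing, for the two localization contexts joined by each vertical arrow, both their bifibrant classes and their classes \eqref{eq:thmgillespiew} of weakly trivial objects. Concretely, each vertical arrow of Figure~\ref{fig_chainbutterfly} is, in Figure~\ref{fig_bunch}, either an honest equality of localization contexts or relates one of them to its ``dg''-version, and in the latter case the computation --- routine from the orthogonality relations of Theorem~\ref{thm_bunch} and the class descriptions of Definition~\ref{def_classes}, but by far the longest of the proof --- amounts to showing that passing from a cotorsion pair on $\scA$ to its dg-companion on $\Ch(\scA)$ leaves the homotopy category of the associated localization untouched, i.e.\ adds only weakly trivial complexes. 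This simultaneously establishes conclusions~(1) and~(2).

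Finally, conclusions~(3) and~(4) reduce, by~(2), to identifying one model structure in each column. On the left column the bifibrant class is the class of acyclic complexes with entries in $\omega := \calC\cap\calW\cap\calF$, which is a Frobenius category with stable category $\bfK\lac(\omega)$; applying Theorem~\ref{theorem_frobeniusclassical} to the Frobenius category $\scF := \calC\cap\calF$, whose class of projective-injective objects is exactly $\omega$, identifies $\bfK\lac(\omega)$ with $\ul\scF\cong\Ho(\calM)$, and to upgrade this to a Quillen equivalence with $\calM$ I would invoke a lift of $\calM$ to a Quillen-equivalent hereditary abelian model structure on $\Ch(\scA)$ --- one of the liftings studied here --- which by~(2) is identity-Quillen-equivalent to each left-column model structure. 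On the right column a computation of the class \eqref{eq:thmgillespiew} of weakly trivial objects shows that it equals the class of all acyclic complexes, so the weak equivalences there are exactly the quasi-isomorphisms and the homotopy category of each right-column model structure is canonically $\Ch(\scA)[\mathrm{qis}^{-1}] = \bfD(\scA)$.
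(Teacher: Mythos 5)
Your overall architecture is the same as the paper's: apply Gillespie's Theorem \ref{thm_gillespie} to the two tilted squares of Figure \ref{fig_bunch}, observe that the rows of the resulting butterfly-shaped diagrams are localization sequences (the paper's Proposition \ref{prop_locconcomp} / item \ref{item:Btriangle}), get the vertical Quillen equivalences and the wing-commutativity by checking that the left and middle vertical pairs share their bifibrant objects while the right vertical pairs share their weakly trivial objects (the paper's Theorem \ref{theorem_gillespiestovicek}, $\wt{\calC}=\dg\wt{\calC}\cap\Acyc(\scA)$, is exactly your ``the dg-companion adds only weakly trivial complexes''), and identify the right-hand column with $\bfD(\scA)$ by computing that its weakly trivial class is all of $\Acyc(\scA)$ (the paper's Proposition \ref{prop_bunchderived}). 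Up to this point your plan is sound and matches the paper.

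The genuine gap is conclusion (3), which you dispose of in one sentence but which is in fact the technical heart of the theorem (your ``technical core'', the vertical equivalences, is comparatively soft). ``Quillen equivalent to $\calM$'' is a statement about a Quillen adjunction between model structures on \emph{different} underlying categories, $\scA$ and $\Ch(\scA)$; Theorem \ref{theorem_frobeniusclassical} only identifies triangulated homotopy categories, and your upgrade step --- ``invoke a lift of $\calM$ to a Quillen-equivalent hereditary abelian model structure on $\Ch(\scA)$, one of the liftings studied here'' --- is circular, since the existence of such a lift is precisely what (3) asserts and nothing in your plan constructs the connecting adjunction. The paper does this in Theorem \ref{thm_lifttocomplexes}: one must exhibit the adjunctions $\cosyz^0\dashv\iota^0$ and $\iota^0\dashv\syz^0$ between, e.g., $(\wt{\calC}\cap\dw\calC\cap\calW,?,\dg\wt{\calF})$ and $(\calC,\calW,\calF)$, check they are Quillen (using ${^{\perp}}\dg\wt{\calF}=\wt{\calC\cap\calW}$ and exactness of $\cosyz^0$ on the relevant short exact sequences), and then prove the derived unit/counit are equivalences --- the key step being that for bifibrant $X$ the kernel of $X\to\iota^0\cosyz^0 X$ splits as $\tau_{\leq 0}X\oplus\sigma_{>0}X$, with the two summands killed by the boundedness/orthogonality facts of Proposition \ref{prop_classes}(iii),(iv), plus essential surjectivity via complete resolutions built from completeness of $(\calC\cap\calW,\calF)$ and $(\calC,\calW\cap\calF)$. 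None of this appears in your plan. A minor related imprecision: the bifibrant objects of the left-column structures are the acyclic complexes with entries in $\omega$ \emph{and syzygies in} $\calC\cap\calF$ (in general an $\scA$-acyclic complex with entries in $\omega$ need not have such syzygies), and it is this class that plays the role of $\bfK\lac(\omega)$ for the Frobenius category $\calC\cap\calF$ in Theorem \ref{theorem_frobeniusclassical}.
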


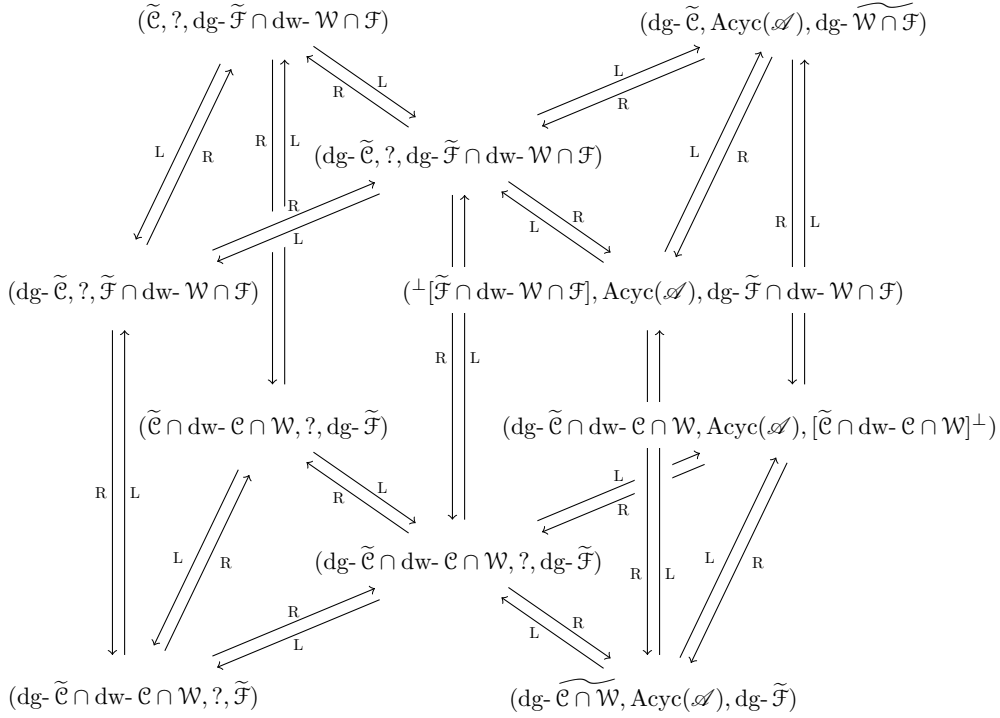
\begin{figure}[h]
\begin{equation*}\begin{tikzpicture}[scale=0.9]
    \begin{scope}[xscale=1.6]
      \node (m-1-3) at (1.2cm,0){};
      \node (m-1-7) at (6cm,0){};
      \node (m-3-4) at (3cm,-2cm){};
      \node (m-5-1) at (0cm,-4cm){};
      \node (m-5-5) at (4.8cm,-4cm){};
    \end{scope}
    \begin{scope}[yshift=-6cm,xscale=1.6]
      \node (m-6-3) at (1.2cm,0){};
      \node (m-6-7) at (6cm,0){};
      \node (m-8-4) at (3cm,-2cm){};
      \node (m-10-1) at (0cm,-4cm){};
      \node (m-10-5) at (4.8cm,-4cm){};
    \end{scope}
    \draw[shorten >=0.7cm,shorten <=0.7cm,<-] (m-3-4) to[Rshifttight] node[below,scale=0.6]{$\adjR$} (m-1-7);
    \draw[shorten >=0.7cm,shorten <=0.7cm,<-] (m-1-7) to[Rshifttight] node[above,scale=0.6]{$\adjL$} (m-3-4);
    \draw[shorten >=0.12cm,shorten <=0.12cm,<-] (m-3-4) to[Rshift] node[above,right=1mm,scale=0.6]{$\adjL$} (m-1-3);
    \draw[shorten >=0.12cm,shorten <=0.12cm,<-] (m-1-3) to[Rshift] node[below,left=1mm,scale=0.6]{$\adjR$} (m-3-4);
    \draw[shorten >=0.3cm,shorten <=0.3cm,->,transform canvas={xshift=-2mm}] (m-1-3) to[Rshifttight] node[left=1mm,scale=0.6]{$\adjL$} (m-5-1);
    \draw[shorten >=0.3cm,shorten <=0.3cm,->,transform canvas={xshift=-2mm}] (m-5-1) to[Rshifttight] node[right=1mm,scale=0.6]{$\adjR$} (m-1-3);
    \draw[shorten >=0.12cm,shorten <=0.12cm,->] (m-3-4) to[Lshift] node[above,right=1mm,scale=0.6]{$\adjR$} (m-5-5);
    \draw[shorten >=0.12cm,shorten <=0.12cm,->] (m-5-5) to[Lshift] node[below,left=1mm,scale=0.6]{$\adjL$} (m-3-4);
    \draw[shorten >=0.12cm,shorten <=0.12cm,->] (m-5-5) to[Lshifttight] node[left=1mm,scale=0.6]{$\adjL$} (m-1-7);
    \draw[shorten >=0.12cm,shorten <=0.12cm,->] (m-1-7) to[Lshifttight] node[right=1mm,scale=0.6]{$\adjR$} (m-5-5);

    \draw[shorten >=0.7cm,shorten <=0.7cm,<-] (m-8-4) to[Rshifttight] node[below,scale=0.6]{$\adjR$} (m-6-7);
    \draw[shorten >=0.7cm,shorten <=0.7cm,<-] (m-6-7) to[Rshifttight] node[above,scale=0.6]{$\adjL$} (m-8-4);
    \draw[shorten >=0.12cm,shorten <=0.12cm,<-] (m-8-4) to[Rshift] node[above,right=1mm,scale=0.6]{$\adjL$} (m-6-3);
    \draw[shorten >=0.12cm,shorten <=0.12cm,<-] (m-6-3) to[Rshift] node[below,left=1mm,scale=0.6]{$\adjR$} (m-8-4);
    \draw[shorten >=0.7cm,shorten <=0.7cm,->] (m-8-4) to[Lshifttight] node[below,scale=0.6]{$\adjL$} (m-10-1);
    \draw[shorten >=0.7cm,shorten <=0.7cm,->] (m-10-1) to[Lshifttight] node[above,scale=0.6]{$\adjR$} (m-8-4);
    \draw[shorten >=0.3cm,shorten <=0.3cm,->,transform canvas={xshift=0.4mm}] (m-6-3) to[Rshifttight] node[left=1mm,scale=0.6]{$\adjL$} (m-10-1);
    \draw[shorten >=0.3cm,shorten <=0.3cm,->,transform canvas={xshift=0.4mm}] (m-10-1) to[Rshifttight] node[right=1mm,scale=0.6]{$\adjR$} (m-6-3);
    \draw[shorten >=0.12cm,shorten <=0.12cm,->] (m-8-4) to[Lshift] node[above,right=1mm,scale=0.6]{$\adjR$} (m-10-5);
    \draw[shorten >=0.12cm,shorten <=0.12cm,->] (m-10-5) to[Lshift] node[below,left=1mm,scale=0.6]{$\adjL$} (m-8-4);
    \draw[shorten >=0.12cm,shorten <=0.12cm,->,transform canvas={xshift=+2mm}] (m-10-5) to[Lshifttight] node[left=1mm,scale=0.6]{$\adjL$} (m-6-7);
    \draw[shorten >=0.12cm,shorten <=0.12cm,->,transform canvas={xshift=+2mm}] (m-6-7) to[Lshifttight] node[right=1mm,scale=0.6]{$\adjR$} (m-10-5);

    \draw[shorten >=0cm,shorten <=0cm,->,transform canvas={xshift=-2mm}] (m-5-1)  to[Rshifttight] node[left,scale=0.6]{$\adjR$} (m-10-1);
    \draw[shorten >=0cm,shorten <=0cm,->,transform canvas={xshift=-2mm}] (m-10-1) to[Rshifttight] node[right,scale=0.6]{$\adjL$} (m-5-1);
    \draw[shorten >=0cm,shorten <=0cm,->,transform canvas={xshift=2mm}] (m-1-7)  to[Rshifttight] node[left,scale=0.6]{$\adjR$} (m-6-7);
    \draw[shorten >=0cm,shorten <=0cm,->,transform canvas={xshift=2mm}] (m-6-7)  to[Rshifttight] node[right,scale=0.6]{$\adjL$} (m-1-7);
    \draw[shorten >=0cm,shorten <=0cm,->,transform canvas={xshift=0mm}] (m-3-4)  to[Rshifttight] node[left,scale=0.6]{$\adjR$} (m-8-4);
    \draw[shorten >=0cm,shorten <=0cm,->,transform canvas={xshift=0mm}] (m-8-4)  to[Rshifttight] node[right,scale=0.6]{$\adjL$} (m-3-4);
    \draw[shorten >=0cm,shorten <=0cm,->,transform canvas={xshift=+2mm}] (m-1-3)  to[Rshifttight] node[pos=0.25,left,scale=0.6]{$\adjR$} (m-6-3);
    \draw[shorten >=0cm,shorten <=0cm,->,transform canvas={xshift=+2mm}] (m-6-3)  to[Rshifttight] node[pos=0.75,right,scale=0.6]{$\adjL$} (m-1-3);

    \draw[shorten >=0.12cm,shorten <=0.12cm,white,line width=5mm] (m-5-5) -- (m-10-5);
    \draw[->] (m-5-5) to[Rshifttight] node[pos=0.75,left,scale=0.6]{$\adjR$} (m-10-5);
    \draw[->] (m-10-5) to[Rshifttight] node[pos=0.25,right,scale=0.6]{$\adjL$} (m-5-5);

    \draw[shorten >=0.12cm,shorten <=0.12cm,white,line width=5mm] (m-3-4) -- (m-5-1);
    \draw[shorten >=0.7cm,shorten <=0.7cm,->] (m-3-4) to[Lshifttight] node[below,scale=0.6]{$\adjL$} (m-5-1);
    \draw[shorten >=0.7cm,shorten <=0.7cm,->] (m-5-1) to[Lshifttight] node[above,scale=0.6]{$\adjR$} (m-3-4);

    \node[scale=0.9,fill=white] at (m-1-3) {$(\wt{\calC},?,\dg\wt{\calF}\cap\dw\calW\cap\calF)$};
    \node[scale=0.9,fill=white] at (m-1-7) {$(\dg\wt{\calC},\Acyc(\scA),\dg\wt{\calW\cap\calF})$};
    \node[scale=0.9,fill=white] at (m-3-4) {$(\dg\wt{\calC},?,\dg\wt{\calF}\cap\dw\calW\cap\calF)$};
    \node[scale=0.9,fill=white] at (m-5-1) {$(\dg\wt{\calC},?,\wt{\calF}\cap\dw\calW\cap\calF)$};
    \node[scale=0.9,fill=white] at ($(m-5-5) - (0mm,0)$) {$({^{\perp}}[\wt{\calF}\cap\dw\calW\cap\calF],\Acyc(\scA),\dg\wt{\calF}\cap\dw\calW\cap\calF)$};
    \node[scale=0.9,fill=white] at (m-6-3) {$(\wt{\calC}\cap\dw\calC\cap\calW,?,\dg\wt{\calF})$};
    \node[scale=0.9,fill=white] at ($(m-6-7) - (5mm,0)$) {$(\dg\wt{\calC}\cap\dw\calC\cap\calW,\Acyc(\scA),[\wt{\calC}\cap\dw\calC\cap\calW]^{\perp})$};
    \node[scale=0.9,fill=white] at (m-8-4) {$(\dg\wt{\calC}\cap\dw\calC\cap\calW,?,\dg\wt{\calF})$};
    \node[scale=0.9,fill=white] at (m-10-1) {$(\dg\wt{\calC}\cap\dw\calC\cap\calW,?,\wt{\calF})$};
    \node[scale=0.9,fill=white] at (m-10-5) {$(\dg\wt{\calC\cap\calW},\Acyc(\scA),\dg\wt{\calF})$};
\end{tikzpicture}\end{equation*}
\caption{Abelian model structures on $\Ch(\scA)$ induced by an abelian model structure on a Grothendieck abelian category $\scA$}\label{fig_chainbutterfly}
\end{figure}

Finally, the induced functor $\bfD(\scA)\to \Ho(\calM)$ solves the realization problem:

\begin{maintheorem}\label{thm_leftrightstabilization}
In the situation of Theorem \ref{thm_chainbutterfly}, the composition
$$\bfD(\scA)\cong\Ho(\dg\wt{\calC\cap\calW},\Acyc(\scA),\dg\wt{\calF})\longrightarrow\Ho(\wt{\calC}\cap\dw\calC\cap\calW,?,\dg\wt{\calF})\cong\Ho(\calM)$$
of the left stabilization functor associated to the lower butterfly in Figure \ref{fig_chainbutterfly} and the
equivalence from Theorem \ref{thm_chainbutterfly}.\eqref{thm_chainbutterfly_it3} makes the following diagram commutative,
\begin{equation*}\label{eq:realizationsolution}\begin{tikzpicture}[baseline,description/.style={fill=white,inner sep=2pt}]
    \matrix (m) [matrix of math nodes, row sep=1.5em,
                 column sep=2.5em, text height=1.5ex, text depth=0.25ex,
                 inner sep=0pt, nodes={inner xsep=0.3333em, inner ysep=0.3333em}]
    {
       \Ho(\calM) && \bfD(\scA)\\
       & \scA, & \\
    };
    \draw[->,rounded corners=4mm] (m-2-2) -| (m-1-3);
    \draw[->,rounded corners=4mm] (m-2-2) -| (m-1-1);
    \draw[->] (m-1-3) -- (m-1-1);
\end{tikzpicture}\end{equation*}
thereby solving the realization problem for $\calM$.
\end{maintheorem}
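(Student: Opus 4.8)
Write $F\colon\bfD(\scA)\to\Ho(\calM)$ for the composite in question. The argument parallels Krause's computation of the stabilization functor of the projective recollement in \cite[\S 7]{Krause_StableDerived}, i.e.\ the proof of Theorem \ref{theorem_basicC}, and reduces to an analysis of complexes concentrated in degree $0$.

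The first step is to note that both legs of the triangle factor through $\Ch(\scA)$ via the degree-$0$ embedding $X\mapsto X[0]$. For $\bfD(\scA)$ this is the definition of the derived category together with the identification $\bfD(\scA)\cong\Ho(\dg\wt{\calC\cap\calW},\Acyc(\scA),\dg\wt{\calF})$ of Theorem \ref{thm_chainbutterfly}.(4), under which $\scA\to\bfD(\scA)$ is $X\mapsto X[0]$ followed by the homotopy-category projection. For $\Ho(\calM)$ one uses the Quillen equivalence of Theorem \ref{thm_chainbutterfly}.(3) between $\calM$ and the left-hand structure $(\wt{\calC}\cap\dw\calC\cap\calW,?,\dg\wt{\calF})$, whose bifibrant objects are acyclic complexes with entries in $\omega$ and whose homotopy-category equivalence with $\Ho(\calM)$ is, up to shift, the equivalence $\syz^0$ of Theorem \ref{theorem_frobeniusclassical} for the Frobenius category $\scF=\calC\cap\calF$ with $\ProjInj(\scF)=\omega$; that the resulting functor $\scA\to\Ho(\calM)$ agrees with the canonical one is then a direct check (using, as in \cite{Becker_ModelsForSingularityCategories}, that a bifibrant replacement of $X[0]$ in this structure extends an $\calM$-bifibrant replacement of $X$). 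It thus suffices to evaluate $F$ on $X[0]$ for $X\in\scA$ and to identify the outcome with the class of a bifibrant $\calM$-replacement of $X$, naturally in $X$.

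Next, unwind $F$ as the composite of the equivalence $\bfD(\scA)\cong\Ho(\dg\wt{\calC\cap\calW},\Acyc(\scA),\dg\wt{\calF})$, the left stabilization functor $\lambda\rho$ of the lower butterfly of Figure \ref{fig_chainbutterfly}, and the equivalence $\Ho(\wt{\calC}\cap\dw\calC\cap\calW,?,\dg\wt{\calF})\cong\Ho(\calM)$. As in the classical case, $\rho$ is the fully faithful section of the localization $\Ho(\dg\wt{\calC}\cap\dw\calC\cap\calW,?,\dg\wt{\calF})\twoheadrightarrow\bfD(\scA)$ attached to the lower butterfly, and $\lambda$ is the reflection onto the left side of the corresponding localization sequence, so on homotopy categories $\lambda\rho$ is computed by the glueing triangle of the recollement. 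Given a functorial bifibrant replacement $\mathbf{P}$ of $X[0]$ in the first structure -- a complex with entries in $\omega$ quasi-isomorphic to $X$, serving as a resolution -- one traces it through $\rho$ and through the glueing triangle; the only inputs needed are the inclusions among the cofibrant and fibrant classes visible in Figure \ref{fig_chainbutterfly}. The expected outcome is that $F(X[0])$ is represented by an acyclic complex $\mathbf{T}$ with entries in $\omega$, the completed resolution of $X$, sitting in a distinguished triangle with $\mathbf{P}$ whose third term is trivial in the left-hand structures.

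It then remains to identify $\mathbf{T}$. By Theorem \ref{thm_chainbutterfly}.(3) its class in $\Ho(\calM)$ is $[\Z_0(\mathbf{T})]$ with $\Z_0(\mathbf{T})\in\calC\cap\calF$, and the triangle of the previous paragraph, read off on zeroth cycles, yields a short exact sequence $0\to W\to\Z_0(\mathbf{T})\to X\to 0$ with $W\in\calW\cap\calF$. Hence $\Z_0(\mathbf{T})\to X$ is a cofibrant replacement of $X$ in $\calM$ which, being in $\calC\cap\calF$, is already bifibrant, so its class in $\calC\cap\calF/\omega=\Ho(\calM)$ is the image of $X$ under $\scA\to\Ho(\calM)$. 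Naturality follows from the functoriality of the replacements in the cofibrantly generated setting, and the triangle commutes, as claimed.

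The main obstacle I expect is this middle step: making precise at the level of complexes the action of the left stabilization functor on the resolution $\mathbf{P}$ -- that the glueing triangle of the recollement attached to the lower butterfly identifies $F(X[0])$ with the completed resolution $\mathbf{T}$, that no intervening derived identity functor perturbs this, and that the triangle restricts on zeroth cycles to the short exact sequence used above. This requires the precise combinatorics of Figure \ref{fig_chainbutterfly} together with an adaptation of the analysis of \cite[\S 7]{Krause_StableDerived} to the localization sequence supplied by the lower butterfly.
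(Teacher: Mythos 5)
Your outline follows the ``Krause \S 7'' route: evaluate the composite on $X[0]$, compute the stabilization on a resolution of $X$ by objects of $\omega$, and recognize the output as a complete resolution whose zeroth (co)syzygy is a special $\calC\cap\calF$-precover of $X$ with kernel in $\calW\cap\calF$. That target is the right one, but the proposal has a genuine gap exactly where you flag it: the claim that $F(X[0])$ is represented by an acyclic complex $\mathbf{T}$ with entries in $\omega$ sitting in a triangle with $\mathbf{P}$, and that this triangle ``read off on zeroth cycles'' yields $0\to W\to \Z_0(\mathbf{T})\to X\to 0$ with $W\in\calW\cap\calF$, is announced as an expected outcome rather than proved, and it is precisely the mathematical content of the theorem. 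Two concrete points make this more than a routine verification: taking zeroth cycles of a short exact sequence (let alone of a representative of a distinguished triangle) is only left exact, so the surjectivity onto $X$ and the membership $W\in\calW\cap\calF$ each require an argument; and the equivalence appearing in the statement is $\bfL\cosyz^0$ from \eqref{eq:frobeniuslift1}, not $\syz^0$ (these differ by $\Sigma$), while your description of the stabilization (``$\rho$ the fully faithful section, $\lambda$ the reflection'') reads as $I_\lambda Q_\rho$, i.e.\ the \emph{right} stabilization of Definition \ref{def_stabilizationfunctor}, rather than the left one $I_\rho Q_\lambda$ used here. Mixing these pairings is exactly the off-by-one-shift trap the paper warns about after Theorem \ref{thm_lifttocomplexes}, and with the middle computation left open there is nothing in your sketch that would catch it.

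The paper's proof sidesteps the explicit complete-resolution computation by three structural reductions, and you would need substitutes for them to make your route work. First, Fact \ref{fact_coctrgeneral} (proved via the Resolution Lemma) shows bounded above acyclic complexes are weakly trivial in $(\dw\calC\cap\calW,?,\dg\wt{\calF})$, so the left adjoint $Q_\lambda$ out of $\Ho(\dg\wt{\calC\cap\calW},\Acyc(\scA),\dg\wt{\calF})$ is computed naively on $\Ch^-(\scA)$ by any bounded above $\calC\cap\calW$-resolution. Second, using this and the exactness of the localization sequence from Proposition \ref{prop_locconcomp}, the composite annihilates every $X\in\calW$: one coresolves $X\rightarrowtail F$ with $F\in\Ch^{\geq 0}(\calW\cap\calF)\subset\dg\wt{\calW\cap\calF}\subseteq[\wt{\calC}\cap\dw\calC\cap\calW]^{\perp}$ and cokernel in $\wt{\calC\cap\calW}$. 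Third, the sequences $0\to X\to F\to C\to 0$ with $F\in\calF$, $C\in\calC\cap\calW$ reduce the commutativity to fibrant $X$, where both the stabilization and $\bfR\iota^0$ are computed naively and visibly agree; this also yields the natural isomorphism of functors, which your pointwise identification via chosen complete resolutions does not provide by itself. So either import analogues of these inputs to control where the glueing triangle lands and to fix the shift, or the central step of your sketch remains unestablished.
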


\section{Proof of Theorem \ref{thm_bunch}}

We begin by recalling the definitions of some crucial classes of complexes; see e.g. \cite[Definition 3.3]{Gillespie_FlatModelStructure}, \cite[Notation 4.1]{StovicekHillGrothendieck}:

\begin{definition}\label{def_classes}
Let $(\calC,\calD)$ be a cotorsion pair in the abelian category $\scA$.
\begin{enumerate}
\item $\dw\calC := \Ch(\calC)$ and $\dw\calD := \Ch(\calD)$.
\item $\wt{\calC}$ denotes the class of acyclic complexes in $\scA$ with syzygies in $\calC$.
\item $\wt{\calD}$ denotes the class of acyclic complexes in $\scA$ with syzygies in $\calD$.
\item $\dg\wt{\calC} := {^{\perp}}\wt{\calD}$ and $\dg\wt{\calD} :=
  \wt{\calC}^{\perp}$.
\end{enumerate}
\end{definition}
\begin{example}
Considering the cotorsion pairs $(\proj,\scA)$ and $(\scA,\inj)$ one recovers the classes of dg projective and
dg injective complexes as $\dg\proj$ and $\dg\inj$, respectively.
\end{example}
\begin{proposition}\label{prop_classes}
Let $(\calC,\calD)$ be a cotorsion pair over $\scA$ and $X\in\Ch(\scA)$.
\begin{enumerate}[leftmargin=1.25cm,ref=(\ref{prop_classes}.\roman*)]
\item\label{item:propclassesdgC} $X\in\dg\calC$ if and only if $X\in\dw\calC$ and $[X,D]=0$ for all $D\in\wt{\calD}$.
\item\label{item:propclassesdgD} $X\in\dg\calD$ if and only if $X\in\dw\calD$ and $[C,X]=0$ for all $C\in\wt{\calC}$.
\end{enumerate}
Further, we have the following inclusions:
\begin{enumerate}
\item[(iii)]\label{item:propclassesdg} $\Ch^-(\calC)\subset\dg\wt{\calC}$ and $\Ch^+(\calD)\subset\dg\wt{\calD}$.
\item[(iv)]\label{item:propclassesacyc} $\Ch^+(\scA)\cap\wt{\calC}\subset {^{\perp}}\left[\dw\calD\right]$ and $\Ch^-(\scA)\cap\wt{\calD}\subset\left[\dw\calC\right]^{\perp}$.\item[(v)]\label{item:propclasseswt} $\wt{\calC}\subset\dg\wt{\calC}$ and $\wt{\calD}\subset\dg\wt{\calD}$.
\end{enumerate}
\end{proposition}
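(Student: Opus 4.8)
The plan is to reduce all five assertions to a short list of standard facts about $\Ch(\scA)$ and its homotopy category $\bfK(\scA)$, which I will use without further comment. Write $D^{n}(A)$ for the disk complex $(\cdots\to 0\to A\xrightarrow{\id}A\to 0\to\cdots)$ concentrated in degrees $n$ and $n-1$, $S^{n}(A)$ for $A$ placed in degree $n$, and $\Z_{n}X$ for the $n$-th cycle object of $X$. The facts are: (a) the functors $X\mapsto X_{n}$ and $X\mapsto X_{n-1}$ are exact with $D^{n}$ as an exact adjoint to each on the appropriate side, whence $\ext^{1}_{\Ch(\scA)}(D^{n}(A),Y)\cong\ext^{1}_{\scA}(A,Y_{n})$ and $\ext^{1}_{\Ch(\scA)}(X,D^{n}(A))\cong\ext^{1}_{\scA}(X_{n-1},A)$ (either from the adjunctions or by a direct pushout/pullback argument), and moreover $D^{n}(A)$ is acyclic with cycle objects among $\{0,A\}$, so $D^{n}(\calC)\subseteq\wt{\calC}$ and $D^{n}(\calD)\subseteq\wt{\calD}$. (b) If $\ext^{1}_{\scA}(X_{p},Y_{q})=0$ for all $p,q$ then every short exact sequence $0\to Y\to E\to X\to 0$ in $\Ch(\scA)$ is termwise split, and $\ext^{1}_{\Ch(\scA)}(X,Y)\cong\Hom_{\bfK(\scA)}(X,\Sigma Y)=[X,\Sigma Y]$, an extension corresponding to the connecting morphism of its triangle in $\bfK(\scA)$; in particular, such a sequence splits iff this morphism vanishes. (c) For a cotorsion pair $(\calC,\calD)$ the classes $\calC={^{\perp}}\calD$ and $\calD=\calC^{\perp}$ are closed under extensions, and for $C\in\calC$ the functor $\Hom_{\scA}(C,-)$ carries every complex of $\wt{\calD}$ to an acyclic complex (split it into the short exact sequences of cycle objects, on which $\ext^{1}_{\scA}(C,-)$ vanishes), and dually $\Hom_{\scA}(-,D)$ for $D\in\calD$ carries every complex of $\wt{\calC}$ to an acyclic complex. (d) $\wt{\calC}$ and $\wt{\calD}$ are closed under $\Sigma^{\pm 1}$, and every acyclic complex $X$ sits in a short exact sequence $0\to\Z(X)\to X\to\Sigma^{-1}\Z(X)\to 0$ whose outer terms are the cycle complex $\Z(X)=\bigoplus_{n}S^{n}(\Z_{n}X)$ (with zero differential) and its shift.

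With these in hand, (i) goes as follows. If $X\in\dw\calC$ and $[X,D]=0$ for every $D\in\wt{\calD}$, then for each such $D$ every extension $0\to D\to E\to X\to 0$ is termwise split by (b)--(c) (as $X_{p}\in\calC$ and $D_{p}\in\calD$) and therefore splits, its connecting morphism lying in $[X,\Sigma D]=0$ since $\Sigma D\in\wt{\calD}$; thus $\ext^{1}_{\Ch(\scA)}(X,\wt{\calD})=0$, i.e.\ $X\in{^{\perp}}\wt{\calD}=\dg\wt{\calC}$. Conversely, if $X\in\dg\wt{\calC}={^{\perp}}\wt{\calD}$, then $\ext^{1}_{\scA}(X_{p},d)\cong\ext^{1}_{\Ch(\scA)}(X,D^{p+1}(d))=0$ for all $p$ and all $d\in\calD$ by (a), since $D^{p+1}(d)\in\wt{\calD}$, so $X_{p}\in{^{\perp}}\calD=\calC$ and $X\in\dw\calC$; and $[X,D]\cong\ext^{1}_{\Ch(\scA)}(X,\Sigma^{-1}D)=0$ for every $D\in\wt{\calD}$ by (b) and (d). Statement (ii) is the mirror image, run through the other adjunction in (a) with $\calC$ and $\calD$ interchanged.

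For (v): let $X\in\wt{\calC}$, so $X_{p}\in\calC$ for all $p$ by (c). For $C\in\calC$ and $D\in\wt{\calD}$, applying $\Hom_{\Ch(\scA)}(-,D)$ to $0\to S^{n-1}(C)\to D^{n}(C)\to S^{n}(C)\to 0$ and using $\ext^{1}_{\Ch(\scA)}(D^{n}(C),D)\cong\ext^{1}_{\scA}(C,D_{n})=0$ from (a) yields $\ext^{1}_{\Ch(\scA)}(S^{n}(C),D)\cong H_{n-1}\bigl(\Hom_{\scA}(C,D_{\bullet})\bigr)=0$, the last equality by (c); hence $\ext^{1}_{\Ch(\scA)}(\Z(X),D)\cong\prod_{n}\ext^{1}_{\Ch(\scA)}(S^{n}(\Z_{n}X),D)=0$, and likewise $\ext^{1}_{\Ch(\scA)}(\Sigma^{-1}\Z(X),D)=0$, so the long exact sequence of (d) forces $\ext^{1}_{\Ch(\scA)}(X,D)=0$ and $X\in\dg\wt{\calC}$; the inclusion $\wt{\calD}\subseteq\dg\wt{\calD}$ is dual. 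Finally, (iii) and (iv) share the device that all relevant extensions are termwise split by (b)--(c), so the $\ext^{1}_{\Ch(\scA)}$-group at issue is a cohomology group of the total Hom-complex $\underline{\Hom}(X,Y)$, and this complex is acyclic: regard it as the product total complex of $(\Hom_{\scA}(X_{p},Y_{q}))_{p,q}$ and invoke the standard fact that the product total complex of a column-bounded-above (resp.\ column-bounded-below) double complex with exact columns (resp.\ exact rows) is acyclic. Concretely, for (iii) with $X\in\Ch^{-}(\calC)$ and $D\in\wt{\calD}$ the columns $\Hom_{\scA}(X_{p},D_{\bullet})$ are acyclic by (c) and the bicomplex is concentrated in columns $p\le N$; for (iv) with $X\in\Ch^{+}(\scA)\cap\wt{\calC}$ and $Y\in\dw\calD$ one has $X_{p}\in\calC$ by (c), the rows $\Hom_{\scA}(X_{\bullet},Y_{q})$ are acyclic by (c), and the bicomplex is concentrated in columns $p\ge M$. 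The remaining inclusions $\Ch^{+}(\calD)\subseteq\dg\wt{\calD}$ and $\Ch^{-}(\scA)\cap\wt{\calD}\subseteq[\dw\calC]^{\perp}$ follow by interchanging $\calC$ with $\calD$ and rows with columns.

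The step I expect to need the most care is (b): identifying $\ext^{1}_{\Ch(\scA)}$ with morphisms up to homotopy under the termwise-split hypothesis is precisely what turns the abstract orthogonality $\dg\wt{\calC}={^{\perp}}\wt{\calD}$ into the concrete description in (i), and both the shift $\Sigma$ and the splitting hypothesis have to be tracked carefully throughout the argument; relatedly, in (iii)--(iv) the double-complex acyclicity must be used in its \emph{product} total form rather than the direct-sum form, since $X$ is unbounded in the direction opposite to its bound. The other ingredients — the two disk-complex $\ext^{1}$-identities, the exactness statements coming from the cotorsion pair, and the cycle short exact sequence — are routine, and the ``dual'' half of each of (ii)--(v) is obtained by the evident symmetry exchanging $\calC\leftrightarrow\calD$ and rows $\leftrightarrow$ columns.
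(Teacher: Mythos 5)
Your treatment of (i) and (ii) is essentially the paper's own argument: the paper likewise reduces to the degreewise split exact structure, identifying $\ext^{1}_{\Ch(\scA)}(Z,X)$ with $[Z,\Sigma X]$ once $Z\in\dw\calC$ and $X\in\dw\calD$, and it obtains $\dg\wt{\calC}\subset\dw\calC$ and $\dg\wt{\calD}\subset\dw\calD$ from the exact adjoints $G^{\pm}$ of the forgetful functor to $\scA^{\ZZ}$; your disk complexes $D^{n}(A)$ are exactly the values of those adjoints on single objects, so the computation is the same. For (v) the paper simply cites Gillespie, while you give a direct proof via the sphere/disk sequences and $0\to\Z(X)\to X\to X/\Z(X)\to 0$; this is correct (and note that only injectivity of the natural map $\ext^{1}(\bigoplus_{n}S^{n}(\Z_{n}X),D)\to\prod_{n}\ext^{1}(S^{n}(\Z_{n}X),D)$ is needed, which holds in any abelian category by assembling the degreewise sections, so no exactness of coproducts has to be invoked).

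The one place that needs repair is the auxiliary fact you invoke for (iii)--(iv). The paper's intended argument is the classical degreewise induction building the null-homotopy, starting at the end where $X$ vanishes and using $\ext^{1}_{\scA}(\calC,\calD)=0$ at each step; your reduction to acyclicity of the product total Hom complex is the same mechanism, but the assembly-type criterion as you state it (``column-bounded-above with exact columns, column-bounded-below with exact rows'') is not a convention-free standard fact. In the usual formulation of the acyclic assembly lemma the product cases pair the half-plane bound with exactness in the \emph{same} direction, whereas your configuration has the bound in the $X$-direction and the exactness in the other direction; this mixed configuration does yield an acyclic product total complex, but only for one of the two possible sides of the bound, and the other side is genuinely false: over $\ZZ/4$ the brutal truncation on the wrong side of the doubly infinite acyclic complex $\cdots\xrightarrow{2}\ZZ/4\xrightarrow{2}\ZZ/4\xrightarrow{2}\cdots$ is a one-sided bounded complex of projectives admitting a non-nullhomotopic chain map to an acyclic complex, so its Hom bicomplex has exact columns and is concentrated in a half-plane, yet its product total complex is not acyclic. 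Since this side-dependence is precisely the content of (iii)--(iv) (both statements fail for the opposite boundedness), you must either fix the differential conventions and check that your bicomplex falls into a valid product case --- with exact columns the horizontal bound must lie on the side from which the horizontal differential flows, so the contracting element can be constructed starting at that edge and continued indefinitely into the unbounded direction, which is where the product totalization is used --- or simply run the classical induction, which starts at the bounded end of $X$ automatically. With that orientation check supplied your proof of (iii)--(iv) closes; everything else in the proposal is sound.
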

\begin{proof}
This is mostly contained in \cite[\S 3]{Gillespie_FlatModelStructure}, but for convenience we include an argument here. First,
the (exact) adjoints $G^{\pm}: \scA^\ZZ\to\Ch(\scA)$ to the (exact) forgetful functor $(-)\us:
\Ch(\scA)\to\scA^\ZZ$ map $\calC^\ZZ$ resp. $\calD^\ZZ$ to $\wt{\calC}$
resp. $\wt{\calD}$. Hence, given $X\in\dg\wt{\calC}$ and $D\in\calD^\ZZ$, we have
$0=\ext^1_{\Ch(\scA)}(X,G^-(D))\cong\ext^1_{\scA^\ZZ}(X\us,D)$, so $X\us\in{^{\perp}}(\calD^\ZZ)=\calC^\ZZ$,
i.e. $X\in\dw\calC$. Similarly, we have $\dg\wt{\calD}\subset\dw\calD$.

Next, if $X\in\dw\calD$ and $Z\in\dw\calC$, then any short exact sequence $0\to X\to Y\to Z\to 0$ in $\Ch(\scA)$ is
degree-wise split, so that $\ext^1_{\Ch(\scA)}(Z,X)$ is canonically isomorphic to the extension group
$\ext^1_{\dw\Ch(\scA)}(Z,X)\cong [Z,\Sigma X]$ with respect to the degree-wise split exact structure on
$\Ch(\scA)$. This proves the first two claims (i) and (ii).

The proof of the inclusions in (iii) and (iv) is analogous to
the proof of the classical fact that chain maps from bounded above complexes of projectives to acyclic complexes are
nullhomotopic, as are chain maps from acyclic complexes to bounded below complexes of injectives. Finally, the inclusions
$\wt{\calC}\subset\dg\wt{\calC}$ and $\wt{\calD}\subset\dg\wt{\calD}$ from part (v) are proved in
\cite[Lemma 3.9]{Gillespie_FlatModelStructure}.
\end{proof}
The following beautiful theorem is the result of long work by Gillespie \cite{Gillespie_FlatModelStructure,
  Gillespie_FlatModelStructureSheaves} in his studies of the flat model structures on $\Ch(R\Mod)$ and $\Ch(\calO_X)$ (for a
survey, see \cite[\S 7]{HoveyCotorsionArxiv}) and new results of Stovicek \cite{StovicekHillGrothendieck} on
deconstructible classes in Grothendieck categories. To be precise, \cite[Proposition 3.6,
Corollary 3.7]{Gillespie_FlatModelStructureSheaves} essentially prove parts (i) and (iii), while the completeness of
$(\wt{\calC},\dg\wt{\calD})$ crucial for part (ii) is guaranteed by the deconstructibility of $\wt{\calC}$ established
in \cite[Theorem 4.2]{StovicekHillGrothendieck}. We collect these arguments and give a proof for
convenience of the reader.
\hyphenation{he-re-di-ta-ry}
\begin{theorem}[\protect{\cite{Gillespie_FlatModelStructureSheaves,StovicekHillGrothendieck}}]\label{theorem_gillespiestovicek}
Let $\scA$ be a Grothendieck category and $(\calC,\calD)$ be a small and hereditary cotorsion pair in $\scA$. Then the
following hold:
\begin{enumerate}
\item $(\dg\wt{\calC},\wt{\calD})$ is a small, hereditary cotorsion pairs in $\Ch(\scA)$.
\item $(\wt{\calC},\dg\wt{\calD})$ is a small, hereditary cotorsion pairs in $\Ch(\scA)$.
\item $\wt{\calC}=\dg\wt{\calC}\cap\Acyc(\scA)$ and $\wt{\calD}=\dg\wt{\calD}\cap\Acyc(\scA)$.
\end{enumerate}
\end{theorem}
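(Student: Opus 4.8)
The plan is to establish the three assertions in the order (ii), (i), (iii), using two independent inputs: Stovicek's transfer of deconstructibility from $\scA$ to $\Ch(\scA)$, and an explicit set of small generators built from sphere and disk complexes.

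First I would set up notation and two computations. Let $S^n,D^n\colon\scA\to\Ch(\scA)$ be the sphere and disk functors; $D^n$ is exact and left adjoint to the exact evaluation $(-)_n$, so $\ext^i_{\Ch(\scA)}(D^n(A),X)\cong\ext^i_\scA(A,X_n)$ and $D^n(\calC)\subseteq\wt\calC$, $D^n(\calD)\subseteq\wt\calD$. Applying $\ext^*_{\Ch(\scA)}(-,X)$ to $0\to S^{n-1}(A)\to D^n(A)\to S^n(A)\to 0$ and factoring the differential as $X_n\twoheadrightarrow B_{n-1}X\hookrightarrow Z_{n-1}X$, one obtains for every $X\in\dw\calD$ and every $C\in\calC$ a natural short exact sequence
\begin{equation*}
0\longrightarrow\ext^1_\scA(C,Z_nX)\longrightarrow\ext^1_{\Ch(\scA)}(S^n(C),X)\longrightarrow\coker\!\bigl[\Hom_\scA(C,B_{n-1}X)\to\Hom_\scA(C,Z_{n-1}X)\bigr]\longrightarrow 0,
\end{equation*}
whose last term embeds into $\Hom_\scA(C,H_{n-1}X)$; in particular $S^n(C)\in{}^\perp\wt\calD=\dg\wt\calC$ whenever $C\in\calC$. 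I would also fix a set $\calS\subseteq\calC$ cogenerating $(\calC,\calD)$ which contains a set of generators of $\scA$: this is possible because completeness of $(\calC,\calD)$ provides special $\calC$-precovers — hence epimorphisms out of $\calC$-objects — of the members of any fixed generating set, and adjoining objects of $\calC$ to a cogenerating set leaves its right $\ext^1$-orthogonal unchanged.

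For (ii): $(\calC,\calD)$ being small and hereditary, $\calC$ is deconstructible in $\scA$, so by \cite[Theorem 4.2]{StovicekHillGrothendieck} the class $\wt\calC$ is deconstructible in the Grothendieck category $\Ch(\scA)$; since it contains the generators $D^n(C)$, $C\in\calS$, it is the left-hand class of a small, complete cotorsion pair $(\wt\calC,\wt\calC^\perp)$, and $\wt\calC^\perp=\dg\wt\calD$ holds by definition. Heredity descends from $(\calC,\calD)$ via the standard fact that a short exact sequence of acyclic complexes induces short exact sequences on all syzygies, together with the characterisation of hereditary cotorsion pairs by closure properties of syzygies. For (i): I claim $(\dg\wt\calC,\wt\calD)$ is cogenerated by the set $\calT:=\{D^n(C),S^n(C)\mid C\in\calS,\,n\in\ZZ\}\subseteq\dg\wt\calC$. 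The inclusion $\wt\calD\subseteq\calT^\perp$ is clear. For $\calT^\perp\subseteq\wt\calD$, take $X\in\calT^\perp$: vanishing of $\ext^1_{\Ch(\scA)}(D^n(C),X)\cong\ext^1_\scA(C,X_n)$ forces $X\in\dw\calD$; then, feeding $X\in\dw\calD$ into the displayed sequence, $\ext^1_{\Ch(\scA)}(S^n(C),X)=0$ gives both $\ext^1_\scA(C,Z_nX)=0$ for all $C\in\calS$ — so $Z_nX\in\calS^\perp=\calD$ — and surjectivity of $\Hom_\scA(G,B_{n-1}X)\to\Hom_\scA(G,Z_{n-1}X)$ for the generators $G\in\calS$; since every map from a generator to $Z_{n-1}X$ then lifts through $B_{n-1}X$, composing the epimorphism $\bigoplus_i G_i\twoheadrightarrow Z_{n-1}X$ with such lifts shows $B_{n-1}X\hookrightarrow Z_{n-1}X$ is epic, hence an isomorphism, so $X$ is acyclic and $X\in\wt\calD$. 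Thus $(\dg\wt\calC,\wt\calD)=({}^\perp\wt\calD,\wt\calD)$ is cogenerated by a set, hence small and complete by the Eklof–Trlifaj theorem over Grothendieck categories; heredity descends from $(\calC,\calD)$ as in (ii).

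Finally (iii): the inclusions $\wt\calC\subseteq\dg\wt\calC\cap\Acyc(\scA)$ and $\wt\calD\subseteq\dg\wt\calD\cap\Acyc(\scA)$ are Proposition \ref{prop_classes} (v) and the definitions. Given (i) and (ii), the two reverse inclusions are equivalent: if $\dg\wt\calD\cap\Acyc(\scA)\subseteq\wt\calD$ and $X\in\dg\wt\calC\cap\Acyc(\scA)$, a special $\wt\calC$-precover $0\to Y\to C'\to X\to 0$ (completeness of $(\wt\calC,\dg\wt\calD)$) has $Y$ acyclic, hence $Y\in\wt\calD$, so — as $X\in\dg\wt\calC={}^\perp\wt\calD$ — the sequence splits and $X$ is a direct summand of $C'\in\wt\calC$; the symmetric argument, using a special $\wt\calD$-preenvelope for $(\dg\wt\calC,\wt\calD)$, yields the other implication. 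So it remains to prove one of these two inclusions \emph{directly}, say $\dg\wt\calD\cap\Acyc(\scA)\subseteq\wt\calD$, and this I expect to be the main obstacle: an acyclic $X\in\dw\calD$ one of whose syzygies lies outside $\calD$ must be exhibited as carrying a nonzero $\ext^1_{\Ch(\scA)}$ from an acyclic complex with syzygies in $\calC$, contradicting $X\in\wt\calC^\perp$; as in the classical nullhomotopy arguments recalled in the proof of Proposition \ref{prop_classes}, the delicate point is the manipulation of unbounded complexes, and here one invokes Gillespie's argument \cite[Proposition 3.6, Corollary 3.7]{Gillespie_FlatModelStructureSheaves}, whose proof carries over once the completeness supplied by \cite[Theorem 4.2]{StovicekHillGrothendieck} is available.
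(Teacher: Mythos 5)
Your proposal is correct and rests on the same two external pillars as the paper's proof -- Gillespie's identities for the induced classes of complexes and Stovicek's deconstructibility theorem -- but it redistributes the work differently. For $(\dg\wt{\calC},\wt{\calD})$ you verify by hand, via the sphere/disk computation, that $\wt{\calD}$ is the right $\ext^1$-orthogonal of an explicit set contained in $\dg\wt{\calC}$; your displayed sequence is indeed correct for $X\in\dw\calD$ and $C\in\calC$ (it is the cokernel of $\Hom_\scA(C,X_n)\to\Hom_\scA(C,Z_{n-1}X)$ split along $X_n\twoheadrightarrow B_{n-1}X\hookrightarrow Z_{n-1}X$, using $\ext^1_\scA(C,X_n)=0$), and the generator argument forcing acyclicity works. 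This replaces the paper's citation of Gillespie's Proposition 3.6 for the cotorsion-pair property of $(\dg\wt{\calC},\wt{\calD})$ and yields smallness and completeness in one stroke. For $(\wt{\calC},\dg\wt{\calD})$ you argue, as the paper does for smallness, through deconstructibility of $\wt{\calC}$ plus the disk generators; note, however, that to conclude that $\wt{\calC}$ itself is the left-hand class, i.e. that ${}^{\perp}(\wt{\calC}^{\perp})=\wt{\calC}$, you also need $\wt{\calC}$ to be closed under direct summands (true and easy, but unstated) -- this identification is precisely what the paper imports from Gillespie's Proposition 3.6, so you should make that closure explicit. Your heredity argument via exact sequences of syzygies is a correct alternative to the paper's argument with $\Hom^{\ast}$-complexes. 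Finally, for (iii) your splitting argument showing that the two identities imply one another, using special approximation sequences supplied by the completeness of the two pairs, is essentially the mechanism the paper invokes as Gillespie's Lemma 3.14(1); the one remaining identity $\dg\wt{\calD}\cap\Acyc(\scA)\subseteq\wt{\calD}$ is cited from Gillespie in your proposal exactly as in the paper (which quotes Gillespie's Theorem 3.12 for the companion identity), so your deferral there matches the paper's own level of detail. In sum: same inputs, a somewhat more self-contained route for (i) and for the reduction step in (iii), with two small glossed points to spell out.
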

\begin{corollary}\label{cor_gillespiestovicek}
In the situation of Theorem \ref{theorem_gillespiestovicek}, $(\dg\wt{\calC},\Acyc(\scA),\dg\wt{\calD})$ is a cofibrantly
generated abelian model structure on $\Ch(\scA)$ with homotopy category $\bfD(\scA)$.
\end{corollary}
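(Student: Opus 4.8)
The plan is to realize $(\dg\wt{\calC},\Acyc(\scA),\dg\wt{\calD})$ as the abelian model structure produced by Theorem~\ref{thm_gillespie} out of the two cotorsion pairs supplied by Theorem~\ref{theorem_gillespiestovicek}, then to identify the class $\calW$ of weakly trivial objects coming from \eqref{eq:thmgillespiew} with $\Acyc(\scA)$, and finally to check that the associated weak equivalences are exactly the quasi-isomorphisms.

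First I would apply Theorem~\ref{thm_gillespie} over the abelian category $\Ch(\scA)$ with the pairs $(\calQ,\wt{\calR}):=(\dg\wt{\calC},\wt{\calD})$ and $(\wt{\calQ},\calR):=(\wt{\calC},\dg\wt{\calD})$, which by parts~(1) and~(2) of Theorem~\ref{theorem_gillespiestovicek} are small (hence complete) hereditary cotorsion pairs in $\Ch(\scA)$. Its two hypotheses are immediate: $\wt{\calC}\subseteq\dg\wt{\calC}$ by Proposition~\ref{prop_classes}, and the core condition $\dg\wt{\calC}\cap\wt{\calD}=\wt{\calC}\cap\dg\wt{\calD}$ holds because, by Theorem~\ref{theorem_gillespiestovicek}(3), both sides equal $\dg\wt{\calC}\cap\dg\wt{\calD}\cap\Acyc(\scA)$. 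Thus Theorem~\ref{thm_gillespie} yields a unique cofibrantly generated abelian model structure $(\dg\wt{\calC},\calW,\dg\wt{\calD})$ on $\Ch(\scA)$ whose weakly trivial objects $\calW$ are described by \eqref{eq:thmgillespiew}.

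Next I would show $\calW=\Acyc(\scA)$. The inclusion $\calW\subseteq\Acyc(\scA)$ is clear: any $X\in\calW$ fits into short exact sequences $0\to X\to D\to C\to 0$ and $0\to D'\to C'\to X\to 0$ with $C,C'\in\wt{\calC}$ and $D,D'\in\wt{\calD}$, all of which are acyclic, so the long exact homology sequence forces $X$ acyclic. For the reverse inclusion, given $X$ acyclic, I would take the special $\dg\wt{\calD}$-preenvelope furnished by completeness of $(\wt{\calC},\dg\wt{\calD})$, a short exact sequence $0\to X\to D'\to C'\to 0$ with $D'\in\dg\wt{\calD}$ and $C'\in\wt{\calC}$; since $X$ and $C'$ are acyclic so is $D'$, hence $D'\in\dg\wt{\calD}\cap\Acyc(\scA)=\wt{\calD}$ by Theorem~\ref{theorem_gillespiestovicek}(3). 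Dually, the special $\dg\wt{\calC}$-precover from $(\dg\wt{\calC},\wt{\calD})$ gives $0\to D\to C\to X\to 0$ with $C\in\dg\wt{\calC}$, $D\in\wt{\calD}$, and acyclicity of $X$ and $D$ places $C$ in $\dg\wt{\calC}\cap\Acyc(\scA)=\wt{\calC}$. These are precisely the two sequences required by \eqref{eq:thmgillespiew}, so $X\in\calW$.

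Finally, to see $\Ho(\calM)\cong\bfD(\scA)$ I would check that the weak equivalences of $(\dg\wt{\calC},\Acyc(\scA),\dg\wt{\calD})$ are exactly the quasi-isomorphisms. By Hovey's description of the trivial (co)fibrations of an abelian model structure, a trivial cofibration is a monomorphism with cokernel in $\dg\wt{\calC}\cap\Acyc(\scA)=\wt{\calC}$ and a trivial fibration is an epimorphism with kernel in $\dg\wt{\calD}\cap\Acyc(\scA)=\wt{\calD}$; since $\wt{\calC},\wt{\calD}\subseteq\Acyc(\scA)$, the long exact homology sequence shows both are quasi-isomorphisms. As any weak equivalence factors as a trivial cofibration followed by a trivial fibration (factor it as a trivial cofibration followed by a fibration and apply two-out-of-three), every weak equivalence is a quasi-isomorphism. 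Conversely, if $f$ is a quasi-isomorphism, write $f=p\circ i$ with $i$ a trivial cofibration and $p$ a fibration; then $p$ is a quasi-isomorphism, so its kernel lies in $\dg\wt{\calD}\cap\Acyc(\scA)=\wt{\calD}$, whence $p$ is a trivial fibration and $f$ a weak equivalence. Therefore $\Ho(\calM)$ is the localization of $\Ch(\scA)$ at the quasi-isomorphisms, i.e.\ $\bfD(\scA)$. The substantive input was already packaged in Theorems~\ref{theorem_gillespiestovicek} and~\ref{thm_gillespie}; the only place needing a little care is the inclusion $\Acyc(\scA)\subseteq\calW$, where one must observe that the completions of the two cotorsion pairs stay inside $\wt{\calD}$ resp.\ $\wt{\calC}$ on acyclic input — which is exactly what part~(3) of Theorem~\ref{theorem_gillespiestovicek} guarantees.
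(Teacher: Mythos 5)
Your proposal is correct and is essentially the argument the paper intends (the corollary is left without an explicit proof precisely because it is this direct application): feed the two small hereditary cotorsion pairs $(\dg\wt{\calC},\wt{\calD})$ and $(\wt{\calC},\dg\wt{\calD})$ from Theorem \ref{theorem_gillespiestovicek} into Theorem \ref{thm_gillespie}, use part (3) of Theorem \ref{theorem_gillespiestovicek} together with \eqref{eq:thmgillespiew} and the approximation sequences to identify $\calW=\Acyc(\scA)$, and conclude that the weak equivalences are the quasi-isomorphisms, so the homotopy category is $\bfD(\scA)$. Your verification of the hypotheses (including the common core via $\wt{\calC}=\dg\wt{\calC}\cap\Acyc(\scA)$ and $\wt{\calD}=\dg\wt{\calD}\cap\Acyc(\scA)$) and the final identification of weak equivalences are all sound.
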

\begin{example}
Applying Corollary \ref{cor_gillespiestovicek} to the cotorsion pair $(\scA,\inj)$ gives rise to the injective
model $(\Ch(\scA),\Acyc(\scA),\dg\inj)$ for $\bfD(\scA)$. Similarly, if $\scA$ has enough projectives the
cotorsion pair $(\proj,\scA)$ yields the projective model $(\dg\proj,\Acyc(\scA),\Ch(\scA))$ for
$\bfD(\scA)$. A nontrivial example -- and in fact \emph{the} example that started the theory -- is obtained from the
flat cotorsion pair $(\flatmod(R),\flatmod(R)^{\perp})$ on $R\Mod$, with $R$ any ring:
in this case, one obtains Gillespie's flat model structure.
\end{example}
\begin{proof}[Proof of Theorem \ref{theorem_gillespiestovicek}]
We first prove (i). To begin, \cite[Proposition
3.6]{Gillespie_FlatModelStructure} shows that $(\dg\wt{\calC},\wt{\calD})$ and
$(\wt{\calC},\dg\wt{\calD})$ are cotorsion pairs; in addition to $(\calC,\calD)$ being a cotorsion pair,
this only needs the assumption that $\calC$ is generating and $\calD$ is cogenerating in $\scA$.

Concerning the smallness of $(\dg\wt{\calC},\wt{\calD})$ and $(\wt{\calC},\dg\wt{\calD})$, applying \cite[Theorem
4.2]{StovicekHillGrothendieck} (generalizing the ideas used by Gillespie \cite{Gillespie_FlatModelStructure} in the case
of the flat model structure on $\Ch(R\Mod)$) shows that $\dg\wt{\calC}$ and $\wt{\calC}$ are deconstructible, so it remains to
check that $\wt{\calC}$ is generating. For this, note that since $\calC$ is generating in $\scA$, $\calC^\ZZ$ is
generating in $\scA^\ZZ$; the counit $G^+(X\us)\to X$ being an epimorphism for $X\in\Ch(\scA)$, it follows that
$G^+(\calC^\ZZ)$ is generating in $\Ch(\scA)$. We have $G^+(\calC^\ZZ)\subset\wt{\calC}$, so $\wt{\calC}$ is
generating, too.

To check that $(\dg\wt{\calC},\wt{\calD})$ and $(\wt{\calC},\dg\wt{\calD})$ are hereditary, it suffices (by
\cite[Corollary 1.1.12]{Becker_ModelsForSingularityCategories}) to show that $\dg\wt{\calC}$ is resolving while
$\dg\wt{\calD}$ is coresolving. We only check that
$\dg\wt{\calC}$ is resolving, the proof of $\dg\wt{\calD}$ being analogous. For that, recall from Proposition
\ref{prop_classes} that $\dg\wt{\calC}$ consists of those $X\in\dw\calC$ for which $\Hom\ua_\scA(X,D)\in\Acyc(\ZZ)$ for
all $D\in\wt{\calD}$, and suppose $0\to X\to Y\to Z\to 0$ is a short exact sequence in $\Ch(\scA)$ with
$Y,Z\in\dg\wt{\calC}$. Then, firstly, $X\in\dw\calC$ since $\calC$ is resolving and $Y,Z\in\dw\calC$. Further, for any
$D\in\wt{\calD}$ (even any $D\in\dw\calD$), the sequence of complexes of abelian groups
$$0\to\Hom\ua_\scA(Z,D)\to\Hom\ua_\scA(Y,D)\to\Hom\ua_\scA(X,D)\to 0$$ is exact, and since $\Hom\ua_\scA(Z,D)$ and
$\Hom\ua_\scA(Y,D)$ are exact by our assumption that $Y,Z\in\dg\wt{\calC}$, it follows that $\Hom\ua_\scA(X,D)$ is
exact, too.

Concerning (ii), \cite[Theorem 3.12]{Gillespie_FlatModelStructure} shows that $\dg\wt{\calC}\cap\Acyc=\wt{\calC}$, and
in view of \cite[Lemma 3.14(1)]{Gillespie_FlatModelStructure} and the completeness of $(\dg\wt{\calC},\wt{\calD})$ we
already proved, this also shows $\dg\wt{\calD}\cap\Acyc=\wt{\calD}$.
\end{proof}

Suppose now that $\calM=(\calC,\calW,\calF)$ is a cofibrantly generated and hereditary abelian model structure on the
Grothendieck category $\scA$, and put $\omega := \calC\cap\calW\cap\calF$, the \emph{core} of $\calM$. We will be
concerned with quite a number of induced cotorsion pairs all of which will have
$\wt{\calC}\cap\wt{\calW\cap\calF}=\wt{\calC\cap\calW}\cap\wt{\calF}=:\wt{\omega}$ as their core, the class of acyclic
complexes with syzygies in $\omega$. In view of the following lemma, these are precisely the contractible complexes with
entries in $\omega$:
\begin{lemma}\label{lem_contractible}
For $\scA$ abelian and $X\in\Acyc(\scA)$, the following are equivalent:
\begin{enumerate}
\item $X$ is contractible.
\item The exact sequences $0\to \syz^n X\to X\to \syz^{n+1} X\to 0$ split.
\end{enumerate}
\end{lemma}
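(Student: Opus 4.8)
The plan is to prove both implications directly, by massaging contracting homotopies in the style of the classical argument that chain maps out of acyclic complexes into bounded-below complexes of injectives are nullhomotopic. Write $d^{\bullet}$ for the differential of $X$, and for each $n$ let $\iota^{n}\colon\syz^{n}X\hookrightarrow X^{n}$ denote the inclusion of the $n$-th syzygy and $\bar{d}^{n}\colon X^{n}\twoheadrightarrow\syz^{n+1}X$ the epimorphism through which $d^{n}$ factors, so that $d^{n}=\iota^{n+1}\bar{d}^{n}$ and the sequence in (2) is $0\to\syz^{n}X\xrightarrow{\iota^{n}}X^{n}\xrightarrow{\bar{d}^{n}}\syz^{n+1}X\to 0$; that $X\in\Acyc(\scA)$ says exactly that each of these is short exact. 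Recall that $X$ is contractible iff there are maps $s^{n}\colon X^{n}\to X^{n-1}$ with $d^{n-1}s^{n}+s^{n+1}d^{n}=\id_{X^{n}}$ for every $n$.

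For (1)$\Rightarrow$(2), given such an $s$ I would let $\rho^{n}\colon X^{n}\to\syz^{n}X$ be the corestriction of $d^{n-1}s^{n}$ along $\image d^{n-1}=\syz^{n}X$, so that $d^{n-1}s^{n}=\iota^{n}\rho^{n}$. Precomposing the homotopy identity with $\iota^{n}$ and using $d^{n}\iota^{n}=0$ annihilates the term $s^{n+1}d^{n}\iota^{n}$, leaving $\iota^{n}\rho^{n}\iota^{n}=\iota^{n}$, hence $\rho^{n}\iota^{n}=\id$ since $\iota^{n}$ is monic; thus $\rho^{n}$ retracts $\iota^{n}$ and the sequence in (2) splits. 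For (2)$\Rightarrow$(1), I would choose for each $n$ a retraction $p^{n}\colon X^{n}\to\syz^{n}X$ of $\iota^{n}$ (available by hypothesis) and upgrade it to a section $\tau^{n}\colon\syz^{n}X\to X^{n-1}$ of $\bar{d}^{n-1}$ satisfying the compatibility identity $\iota^{n-1}p^{n-1}+\tau^{n}\bar{d}^{n-1}=\id_{X^{n-1}}$; postcomposing the latter with $\bar{d}^{n-1}$ and using $\bar{d}^{n-1}\iota^{n-1}=0$ gives $\bar{d}^{n-1}\tau^{n}=\id$. Putting $s^{n+1}:=\tau^{n+1}p^{n+1}$, one computes $s^{n+1}d^{n}=\tau^{n+1}p^{n+1}\iota^{n+1}\bar{d}^{n}=\tau^{n+1}\bar{d}^{n}$ using $p^{n+1}\iota^{n+1}=\id$, and $d^{n-1}s^{n}=\iota^{n}\bar{d}^{n-1}\tau^{n}p^{n}=\iota^{n}p^{n}$ using $\bar{d}^{n-1}\tau^{n}=\id$; adding these and invoking the compatibility identity in degree $n$ yields $d^{n-1}s^{n}+s^{n+1}d^{n}=\id_{X^{n}}$, so $s$ is a contracting homotopy.

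The only step requiring a genuine (though entirely routine) argument is the passage, in (2)$\Rightarrow$(1), from an arbitrary retraction $p^{n}$ of $\iota^{n}$ to a section $\tau^{n+1}$ of $\bar{d}^{n}$ with $\iota^{n}p^{n}+\tau^{n+1}\bar{d}^{n}=\id_{X^{n}}$: this is the standard lemma that the two halves of a split short exact sequence can be chosen compatibly, proved by observing that $\id_{X^{n}}-\iota^{n}p^{n}$ is annihilated by $p^{n}$ and hence factors, uniquely since $\bar{d}^{n}$ is epic, through $\bar{d}^{n}$, which defines $\tau^{n+1}$. Apart from this, everything is bookkeeping with the homotopy identity, so I do not anticipate a real obstacle. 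Alternatively, one could phrase (2)$\Rightarrow$(1) structurally: the chosen splittings identify $X^{n}$ with $\syz^{n}X\oplus\syz^{n+1}X$ in such a way that $d^{\bullet}$ becomes the evident shift map, so that $X$ is a direct sum of elementary contractible complexes $\cdots\to 0\to M\xrightarrow{\id}M\to 0\to\cdots$ and is therefore contractible; but the explicit homotopy above is shorter and avoids discussing (co)products in $\Ch(\scA)$.
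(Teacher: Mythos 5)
Your proof is correct. The paper itself gives no argument here (its proof of this lemma reads ``Omitted''), so there is nothing to compare against; what you write is the standard splitting/contracting-homotopy argument that the author presumably had in mind, and both directions check out: $\rho^n=\bar d^{\,n-1}s^n$ retracts $\iota^n$, and $s^{n+1}=\tau^{n+1}p^{n+1}$ is a contracting homotopy once the section $\tau^{n+1}$ is chosen compatibly with $p^n$. One small wording fix in that last step: the reason $\id_{X^n}-\iota^n p^n$ factors through $\bar d^{\,n}$ is that it vanishes under \emph{pre}composition with $\iota^n$, i.e. $(\id_{X^n}-\iota^n p^n)\iota^n=0$, so it kills $\image\iota^n=\ker\bar d^{\,n}$ and hence factors through the cokernel of $\iota^n$; the identity $p^n(\id_{X^n}-\iota^n p^n)=0$ that you cite also holds but is not the condition relevant for this factorization.
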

\begin{proof} Omitted.\end{proof}
\begin{lemma}\label{lem_doubleorthogonalagain}
Let $\scA$ be an abelian category and $\omega$ be a self-orthogonal class of objects in $\scA$,
i.e. $\omega\subset{^{\perp}}\omega$. Then $\wt{\omega}$, the class of contractible complexes with entries in $\omega$,
is the largest self-orthogonal, $\Sigma$-stable class in $\Ch(\scA)$ contained in $\dw\omega$.
\end{lemma}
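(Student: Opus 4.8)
The plan is to verify three things about $\wt{\omega}$: first, that $\wt{\omega}$ is self-orthogonal, $\Sigma$-stable, and contained in $\dw\omega$; second, that any other class with these three properties is contained in $\wt{\omega}$; and third, to confirm that the description of $\wt{\omega}$ as contractible complexes with entries in $\omega$ is the right one (this last point is essentially Lemma~\ref{lem_contractible} together with the definition of $\wt{\omega}$, so it is the easy part). For the containment $\wt{\omega}\subset\dw\omega$ and $\Sigma$-stability, nothing is needed: a contractible complex with entries in $\omega$ is by definition degreewise in $\omega$, and shifting preserves both contractibility and the degreewise condition. The substantive content is the self-orthogonality of $\wt{\omega}$ and the maximality.

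\textbf{Self-orthogonality of $\wt{\omega}$.} First I would observe that if $X\in\wt{\omega}$ is contractible, then $X$ is a direct summand (indeed, using Lemma~\ref{lem_contractible}, built by split short exact sequences) of complexes of the form $G^+(\omega^{\ZZ})$ or $G^-(\omega^{\ZZ})$, the images under the adjoints $G^{\pm}$ of the forgetful functor of graded objects with entries in $\omega$; more elementarily, a contractible complex with entries $X^n$ is isomorphic to $\bigoplus_n D^n(X^n/\text{boundaries})$ where $D^n(-)$ denotes the disk complex. Then for $X,Y\in\wt{\omega}$, one computes $\ext^1_{\Ch(\scA)}(X,Y)$ by reducing to $\ext^1_{\Ch(\scA)}(D^n(M),D^m(N))$ with $M,N\in\omega$; by the standard adjunction $\Hom_{\Ch(\scA)}(D^n(M),Y)\cong\Hom_{\scA}(M,Y^n)$ (and the corresponding $\ext$ statement, using that $D^n$ is exact and has an exact right adjoint), this reduces to $\ext^1_{\scA}(M,N)=0$, which holds since $\omega\subset{}^{\perp}\omega$. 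Alternatively, and perhaps more cleanly, one can invoke the argument already used in the proof of Proposition~\ref{prop_classes}: a short exact sequence $0\to Y\to E\to X\to 0$ with $X$ contractible (hence $X\in\dw\omega\subset\dw\calD$ for the relevant $\calD$, but more to the point $X$ degreewise projective relative to the degreewise-split structure) is degreewise split, so $\ext^1_{\Ch(\scA)}(X,Y)\cong[X,\Sigma Y]$, the morphisms in the degreewise-split homotopy category; and a chain map out of a contractible complex is nullhomotopic, so this group vanishes.

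\textbf{Maximality.} Now let $\calE$ be any self-orthogonal, $\Sigma$-stable class contained in $\dw\omega$; I must show $\calE\subseteq\wt{\omega}$, i.e.\ every $X\in\calE$ is a contractible complex with entries in $\omega$. Entries in $\omega$ is immediate from $\calE\subseteq\dw\omega$. For contractibility, I would use Lemma~\ref{lem_contractible}: it suffices to show each sequence $0\to\syz^n X\to X\to\syz^{n+1}X\to 0$ splits—wait, that requires $X$ acyclic, which is not yet known. So first I need $X\in\Acyc(\scA)$. Here is where $\Sigma$-stability and self-orthogonality combine: consider the short exact sequence of complexes $0\to X\to \bigl(\text{something}\bigr)$—more precisely, for each $n$ there is a short exact sequence relating $X$, a shift of $X$, and a disk or sphere complex, but the cleanest route is: since $X$ and $\Sigma X$ both lie in $\calE$ and $\ext^1_{\Ch(\scA)}(\Sigma X, X)=0$, and there is a canonical short exact sequence whose connecting data computes the homology of $X$, one deduces $X$ is acyclic. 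Concretely, one can use that $\ext^1_{\Ch(\scA)}(\Sigma^{-1}S^n(M), X)$ for suitable test objects detects $\coH^n(X)$; but to stay inside $\calE$, I would instead argue: the cone on $\id_X$ is $\Sigma X\oplus X$-ish and contractible, and comparing with $X$ via the self-$\ext$ vanishing forces the differentials of $X$ to have the splitting property of Lemma~\ref{lem_contractible} directly—that is, one shows $0\to Z^n X\to X^n\to B^{n+1}X\to 0$ splits by building the relevant $\ext^1$ as a subquotient of $\ext^1_{\Ch(\scA)}(\Sigma X,X)=0$. \textbf{This is the step I expect to be the main obstacle}: correctly identifying the short exact sequence in $\Ch(\scA)$, with both outer terms in the $\Sigma$-closure of $\calE$, whose splitting is equivalent to the contractibility of $X$, and then reading off that its $\ext$-class vanishes by self-orthogonality. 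Once $X$ is known contractible with entries in $\omega$, we are done. I would close by remarking that the two halves together give that $\wt{\omega}$ is the largest such class, completing the proof.
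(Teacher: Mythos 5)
Your first half (self-orthogonality of $\wt{\omega}$) is correct, and in its ``alternative'' form it is exactly the paper's argument: since $\omega$ is self-orthogonal in $\scA$, any short exact sequence of complexes whose outer terms lie in $\dw\omega$ is degreewise split, hence classified by a homotopy class, so $\ext^1_{\Ch(\scA)}(X,Y)\cong\ext^1_{\dw\Ch(\scA)}(X,Y)\cong[X,\Sigma Y]$, which vanishes when $X$ is contractible. (Your first, disk-decomposition argument is dispensable and slightly shaky: $\ext^1$ in the second variable need not turn infinite direct sums into products, and the syzygies of a contractible complex are only direct summands of objects of $\omega$; but since your second argument works, this does no harm.)

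The genuine gap is in the maximality half, and you flag it yourself. From $\Sigma$-stability and self-orthogonality of $\calE$ you correctly get $\ext^1_{\Ch(\scA)}(\Sigma X,X)=0$, but you then try to route this through acyclicity of $X$ and a splitting of sequences $0\to Z^nX\to X^n\to B^{n+1}X\to 0$ as in Lemma \ref{lem_contractible}; those sequences are not even available before acyclicity is established, and you never identify the extension whose class should vanish. The missing observation is that the mechanism from your first half applies verbatim here: because $X$ and $\Sigma X$ both lie in $\dw\omega$ and $\omega$ is self-orthogonal in $\scA$, every extension of $\Sigma X$ by $X$ in $\Ch(\scA)$ is degreewise split, so $0=\ext^1_{\Ch(\scA)}(\Sigma X,X)=\ext^1_{\dw\Ch(\scA)}(\Sigma X,X)\cong[\Sigma X,\Sigma X]$. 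Thus the identity of $\Sigma X$ is nullhomotopic, i.e.\ $X$ is contractible, with no intermediate acyclicity step and no appeal to Lemma \ref{lem_contractible}. This one-line identification is the paper's entire converse argument, so the fix is short, but as written your proof of maximality is incomplete.
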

\begin{proof}
By self-orthogonality of $\omega$, any short exact sequence $0\to X\to Y\to Z\to 0$ with $X,Y\in\dw\omega$ is
degree-wise split, and hence determined by a homotopy class in $[Z,\Sigma X]$. This shows that
$\wt{\omega}$ is self-orthogonal in $\Ch(\scA)$.

Conversely, suppose $\calE\subset\dw\omega$ is self-orthogonal and $\Sigma$-stable,
i.e. $\Sigma\calE\subset\calE$. Then, given any $X\in\calE$ we have $0=\ext^1_{\Ch(\scA)}(\Sigma
X,X)=\ext^1_{\dw\Ch(\scA)}(\Sigma X,X)\cong [\Sigma X,\Sigma X]$, so $X$ is contractible.
\end{proof}

\begin{proof}[Proof of Theorem \ref{thm_bunch}]
We begin by showing that all cotorsion pairs are small.

For the top square, it suffices to check that the right hand sides
of the cotorsion pairs listed in it are of the form $\calS^{\perp}$ for a generating set $\calS\subset\Ch(\scA)$. This
property is preserved under intersection, so we need to check it for $\wt{\calF}$, $\dg\wt{\calF}$ and
$\dw\calW\cap\calF$ only. Concerning the first two, we know from Theorem \ref{theorem_gillespiestovicek} that
$\wt{\calF}=\left[\dg\wt{\calC\cap\calW}\right]^{\perp}$ and $\dg\wt{\calF}=\left[\wt{\calC\cap\calW}\right]^{\perp}$, with $\wt{\calC\cap\calW}$
and $\dg\wt{\calC\cap\calW}$ both deconstructible and generating. For $\dw\calW\cap\calF$, note that $X\in\dw\calW\cap\calF$ if
and only if for all $C\in\calC^\ZZ$  we have $0=\ext^1_{\scA^\ZZ}(C,X\us)\cong\ext^1_{\Ch(\scA)}(G^+(C),X)$, so that by
cocontinuity and exactness of $G^+$ we conclude that $\dw\calW\cap\calF=G^+(\calS)^{\perp}$ for $\calS\subset\scA$ some
set chosen such
that $\calC=\filt\calS$; as $\calC$ is generating, we may assume $\calS$ generating, too, and then $G^+(\calS)$ is
generating in $\Ch(\scA)$ since the counit $G^+ X\us\to X$ is an epimorphism for all $X\in\Ch(\scA)$. This concludes the
proof that all cotorsion pairs in the upper square are small.

For the middle square, all cotorsion pairs contained in it are of the form studied in Theorem \ref{theorem_gillespiestovicek}, hence
small. Finally, to prove that the cotorsion pairs in the lower square are small it suffices to show that their left hand
sides are generating and deconstructible. They are generating as they all contain the generating class
$G^+(\calC\cap\calW)$, and deconstructibility follows from the stability of deconstructible classes under intersection
\cite[Proposition 2.9]{StovicekHillGrothendieck} as well as the deconstructibility of $\wt{\calC}$, $\dg\wt{\calC}$
and $\dw\wt{\calC\cap\calW}$ \cite[Theorem 4.2]{StovicekHillGrothendieck}.

Next we check that all cotorsion pairs are hereditary. For the ones in middle square, this follows from Theorem
\ref{theorem_gillespiestovicek} above. Concerning the ones in the upper square, their right hand sides are coresolving
as intersections of the classes $\wt{\calF}$, $\dg\wt{\calF}$ and $\dw\calW\cap\calF$, each of which is coresolving:
the first two are again treated as part of Theorem \ref{theorem_gillespiestovicek}, while $\dw\calW\cap\calF$ is
coresolving since $\calW\cap\calF$ is. Applying \cite[Corollary 1.1.12]{Becker_ModelsForSingularityCategories} then
shows that all cotorsion pairs in the upper square are hereditary, and the reasoning for the lower square is analogous.

Finally we check that all cotorsion pairs have core equal to $\wt{\omega}$, the class of contractible complexes with
values in $\calC\cap\calW\cap\calF$. First, using the fact $\ext^1_{\dw\Ch\scA}(-,-)\cong [\Sigma(-),-]$ it
is a quick check that $\wt{\omega}$ is contained in all the cores. For the reverse inclusion, Lemma
\ref{lem_doubleorthogonalagain} and the stability under shift of all the classes involved show that it suffices to check
that all cores are contained in $\dw\omega$. For the middle square, this is clear. For the upper square, all the
cotorsion pairs in it have their right hand sides contained in $\dw\calW\cap\calF$, and the fact that they are all
connected to a cotorsion pair in the middle row by a chain of arrows shows that their left hand sides are all contained in
$\dw\calC$. Similarly, the left hand sides of the cotorsion pairs in the lower square are all contained in
$\dw\calC\cap\calW$, while all of them receiving an arrow from the middle square shows that their right hand sides are
all contained in $\dw\calF$.
\end{proof}

\section{Proof of Theorem \ref{thm_chainbutterfly}}

We now elaborate on the following statements, leading to the proof of Theorem \ref{thm_chainbutterfly}:
\begin{enumerate}[leftmargin=1cm,label=(\roman*)]
\item\label{item:Btriangle} Each triangle $\ortriangle{0.4}{1mm}$ in Figure \ref{fig_bunch} gives rise to a localization sequence between the three
  model structures induced by its edges (Proposition \ref{prop_locconcomp}).
\item\label{item:Bsquare} Each square $\orsquare{0.4}{1mm}$ in Figure \ref{fig_bunch} yields a butterfly-shaped diagram of adjunctions
  between the two localization sequences associated via \ref{item:Btriangle} to its triangle faces. However,
  these are not necessarily butterflies.
\item\label{item:Bdashed} The dashed arrows in Figure \ref{fig_bunch} yield four models for $\bfD(\scA)$ (Proposition \ref{prop_bunchderived}).
\item\label{item:Bsnaked} The snaked arrows yield four models structures on $\Ch(\scA)$ Quillen equivalent to our given model
  structure $(\calC,\calW,\calF)$ on $\scA$ (Theorem \ref{thm_lifttocomplexes}).
\item\label{item:Bzigzag} The zigzag arrows induce model structures analogous to the co- and contraderived model structures,
  and the upper right and lower left triangles induce localization sequences connecting them to the models for $\bfD(\scA)$
  from \ref{item:Bdashed} and the model structures associated to the dashdotted arrows. See Example
  \ref{ex_coctrgeneral}.
\item\label{item:Btilted} Each of the two tilted squares in Figure \ref{fig_bunch} composed out of two dashed, two snaked and
  one dotted arrow, gives rise to a butterfly between the model structures associated to the dotted arrows and the
  models from \ref{item:Bdashed} and \ref{item:Bsnaked}.
\end{enumerate}

We begin with the model structures induced by the snaked arrows in Figure \ref{fig_bunch}:

\begin{theorem}\label{thm_lifttocomplexes}
Let $\calM=(\calC,\calW,\calF)$ be a cofibrantly generated, hereditary abelian model structure on the Grothendieck
category $\scA$. Then there is a square of cofibrantly generated abelian model structures on $\Ch(\scA)$ and identity
Quillen equivalences:
\begin{equation}\label{eq:liftsquare}\begin{tikzpicture}[baseline,description/.style={fill=white,inner sep=2pt}]
    \matrix (m) [matrix of math nodes, row sep=2em,
                 column sep=2.5em, text height=1.5ex, text depth=0.25ex,
                 inner sep=0pt, nodes={inner xsep=0.3333em, inner ysep=0.3333em}]
    {
       (\wt{\calC},?,\dg\wt{\calF}\cap\dw\calW\cap\calF) & (\dg\wt{\calC},?,\wt{\calF}\cap\dw\calW\cap\calF)\\
       (\wt{\calC}\cap\dw\calC\cap\calW,?,\dg\wt{\calF}) & (\dg\wt{\calC}\cap\dw\calC\cap\calW,?,\wt{\calF})\\
    };
    \draw[->,transform canvas={yshift=0.8mm}]  (m-1-1) -- node[above,scale=0.75]{$\adjL$} (m-1-2);
    \draw[->,transform canvas={yshift=-0.8mm}]  (m-1-2) -- node[below,scale=0.75]{$\adjR$} (m-1-1);
    \draw[->,transform canvas={yshift=0.8mm}]  (m-2-1) -- node[above,scale=0.75]{$\adjL$} (m-2-2);
    \draw[->,transform canvas={yshift=-0.8mm}]  (m-2-2) -- node[below,scale=0.75]{$\adjR$} (m-2-1);
    \draw[->,transform canvas={xshift=-0.8mm}]  (m-1-1) -- node[left,scale=0.75]{$\adjR$} (m-2-1);
    \draw[->,transform canvas={xshift=+0.8mm}]  (m-2-1) -- node[right,scale=0.75]{$\adjL$} (m-1-1);
    \draw[->,transform canvas={xshift=-0.8mm}]  (m-1-2) -- node[left,scale=0.75]{$\adjR$} (m-2-2);
    \draw[->,transform canvas={xshift=+0.8mm}]  (m-2-2) -- node[right,scale=0.75]{$\adjL$} (m-1-2);
\end{tikzpicture}\end{equation}
Their homotopy categories are equivalent to the homotopy category of acyclic complexes with entries in
$\calC\cap\calW\cap\calF$ and syzygies in $\calC\cap\calF$. Moreover, there are Quillen equivalences
\begin{align}\label{eq:frobeniuslift1}
\cosyz^0: (\wt{\calC}\cap\dw\calC\cap\calW,?,\dg\wt{\calF})\rightleftarrows(\calC,\calW,\calF): \iota^0\\\label{eq:frobeniuslift2}
\iota^0: (\calC,\calW,\calF)\rightleftarrows (\dg\wt{\calC},?,\wt{\calF}\cap\dw\calW\cap\calF): \syz^0
\end{align}
which on the homotopy categories yield the classical equivalences from Theorem \ref{theorem_frobeniusclassical}.
\end{theorem}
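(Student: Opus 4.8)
The plan is to handle the four parts of the statement in turn: existence of the displayed model structures, the identification of their (common) bifibrant class and hence of their homotopy categories, the identity Quillen equivalences, and finally the comparison with $\calM$ through the (co)syzygy functors. To begin, the four model structures in \eqref{eq:liftsquare} are exactly the localizations $\Loc(\alpha)$ attached to the four snaked arrows $\alpha$ of Figure \ref{fig_bunch}: for instance the snaked arrow from $(\perp,\dg\wt\calF\cap\dw\calW\cap\calF)$ to $(\wt\calC,\dg\wt{\calW\cap\calF})$ produces $(\wt\calC,?,\dg\wt\calF\cap\dw\calW\cap\calF)$, and the other three match up the same way. By Theorem \ref{thm_bunch} each snaked arrow is a localization context built from small hereditary cotorsion pairs, so Theorem \ref{thm_gillespie} turns each into a cofibrantly generated abelian model structure on $\Ch(\scA)$, necessarily hereditary.

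Next I would compute the bifibrant class $\calC_i\cap\calF_i$ of each of the four Hovey triples directly. Using the inclusions $\wt\calC\subseteq\dg\wt\calC$ and $\wt\calF\subseteq\dg\wt\calF$ of Proposition \ref{prop_classes} together with the identities $\dg\wt\calC\cap\Acyc(\scA)=\wt\calC$ and $\dg\wt\calF\cap\Acyc(\scA)=\wt\calF$ of Theorem \ref{theorem_gillespiestovicek}, and then invoking heredity of $\calM$ to promote ``entries in $\calW\cap\calF$ (resp.\ $\calC\cap\calW$) together with syzygies in $\calC$ (resp.\ $\calF$)'' to ``entries in $\omega$'', all four bifibrant classes collapse to the single class $\calB:=\wt\calC\cap\wt\calF\cap\dw\omega$ of acyclic complexes with entries in $\omega$ and syzygies in $\calC\cap\calF$. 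By Theorem \ref{thm_bunch} the four localization contexts share a common core, which by Lemmas \ref{lem_contractible} and \ref{lem_doubleorthogonalagain} is the class $\wt\omega$ of contractible complexes with entries in $\omega$, and this is precisely the class of trivially bifibrant objects of each of the four model structures. Hence $\calB$, with the degreewise split exact structure, is a Frobenius category with $\ProjInj(\calB)=\wt\omega$, and every $\Ho(\calM_i)$ is canonically its stable category $\calB/\wt\omega$ — which, reading $\calB$ as the acyclic complexes in the Frobenius category $\calC\cap\calF$ with entries in $\omega=\ProjInj(\calC\cap\calF)$, is the homotopy category claimed in the statement.

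Each arrow in \eqref{eq:liftsquare} is an identity functor, so to see it is a Quillen adjunction it suffices to check nesting of the relevant (trivially) cofibrant classes: the cofibrant inclusions of $\wt\calC$ and $\wt\calC\cap\dw\calC\cap\calW$ into $\dg\wt\calC$ and $\dg\wt\calC\cap\dw\calC\cap\calW$ are recorded in Figure \ref{fig_bunch}, and the trivially cofibrant inclusions follow by applying ${}^{\perp}(-)$ to the inclusions of fibrant classes. Once these are Quillen adjunctions they are automatically Quillen equivalences, because source and target have the same bifibrant objects $\calB$ and the same trivially bifibrant objects $\wt\omega$, so $\bfL\id$ and $\bfR\id$ are literally the identity of $\calB/\wt\omega$. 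For the comparison with $\calM$: the stalk functor $\iota^0\colon\scA\to\Ch(\scA)$ (object in degree $0$) is exact, with left adjoint $\cosyz^0$ sending $X$ to $\coker(X_1\to X_0)$ and right adjoint $\syz^0$ sending $X$ to $\ker(X_0\to X_{-1})$, both of which are exact on acyclic complexes. Then $\iota^0$ is left Quillen from $(\calC,\calW,\calF)$ to $(\dg\wt\calC,?,\wt\calF\cap\dw\calW\cap\calF)$: it carries $\calC$ into $\Ch^-(\calC)\subseteq\dg\wt\calC$ and $\calC\cap\calW$ into $\Ch^-(\calC\cap\calW)\subseteq\dg\wt{\calC\cap\calW}\subseteq{}^{\perp}[\wt\calF\cap\dw\calW\cap\calF]$ by Proposition \ref{prop_classes}; dually $\cosyz^0$ is left Quillen from $(\wt\calC\cap\dw\calC\cap\calW,?,\dg\wt\calF)$ to $\calM$, since on acyclic complexes $\cosyz^0$ computes the $(-1)$-st syzygy and hence sends $\wt\calC\cap\dw\calC\cap\calW$ into $\calC$ and $\wt{\calC\cap\calW}$ into $\calC\cap\calW$. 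That these Quillen adjunctions are Quillen equivalences, and that the induced functors on homotopy categories are those of Theorem \ref{theorem_frobeniusclassical}, follows by identifying the derived functors on bifibrant objects: for $A\in\calC\cap\calF$ a (co)fibrant replacement of $\iota^0 A$ in the target is a complete resolution of $A$ by objects of $\omega$, and $\syz^0$ (resp.\ $\cosyz^0$) applied to it returns $A$, exactly as in Theorem \ref{theorem_frobeniusclassical}.

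The main obstacle is the bookkeeping in the second paragraph: one must carefully verify that the numerous intersections of the classes $\dw\calC$, $\dg\wt\calC$, $\wt\calC$, $\wt\calF$, $\dg\wt\calF$, $\dw\calW\cap\calF$, $\dw\calC\cap\calW$ occurring in the four triples genuinely collapse to $\calB$ and $\wt\omega$ — this is where heredity of $\calM$ and Theorem \ref{theorem_gillespiestovicek} do all the work — and, in the comparison with $\calM$, that the (co)fibrant replacements that occur really are complete resolutions, i.e.\ that the syzygy condition built into $\calB$ is automatic for the complexes that actually arise.
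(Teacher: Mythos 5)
Your first three steps are essentially sound and agree with what the paper leaves implicit: the four triples in \eqref{eq:liftsquare} are the localizations of the snaked arrows of Figure \ref{fig_bunch} (small, hereditary, hence cofibrantly generated abelian model structures by Theorems \ref{thm_bunch} and \ref{thm_gillespie}); their bifibrant classes all collapse to the acyclic complexes with entries in $\omega$ and syzygies in $\calC\cap\calF$ (note this uses extension-closure of $\calC$ and $\calF$, a cotorsion-pair property, rather than heredity as such, together with $\dg\wt{\calC}\cap\Acyc(\scA)=\wt{\calC}$ and $\dg\wt{\calF}\cap\Acyc(\scA)=\wt{\calF}$); and identity Quillen adjunctions between hereditary abelian model structures with equal bifibrant classes and equal core $\wt{\omega}$ are Quillen equivalences, which is exactly the mechanism the paper itself invokes later.

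The genuine gap is in the last step, which is the only part the paper actually writes out: proving that \eqref{eq:frobeniuslift1} and \eqref{eq:frobeniuslift2} are Quillen \emph{equivalences}. Your argument reduces to the assertion that a (co)fibrant replacement of $\iota^0 A$ in the target ``is a complete resolution of $A$ by objects of $\omega$'' on which $\syz^0$ (resp.\ $\cosyz^0$) returns $A$. For an arbitrary replacement this is false: a fibrant replacement of $\iota^0A$ in $(\dg\wt{\calC},?,\wt{\calF}\cap\dw\calW\cap\calF)$ is a monomorphism $\iota^0A\rightarrowtail X$ whose cokernel merely lies in ${}^{\perp}[\wt{\calF}\cap\dw\calW\cap\calF]$, a class containing all of $\dg\wt{\calC\cap\calW}$ and in particular non-acyclic complexes, so $X$ need not have entries in $\calC$ and $\syz^0X$ need not be $A$. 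What one must do instead is (a) \emph{construct} a complete resolution of $A$ by inductive use of the completeness of $(\calC\cap\calW,\calF)$ and $(\calC,\calW\cap\calF)$, and (b) \emph{prove} that the relevant comparison map (the unit $X\to\iota^0\cosyz^0X$ for \eqref{eq:frobeniuslift1}, dually the counit for \eqref{eq:frobeniuslift2}) is a weak equivalence; step (b) is the heart of the matter and is absent from your sketch: it requires splitting the kernel (resp.\ cokernel) as a direct sum of a brutal truncation and a one-sided resolution, $\tau_{\leq 0}X\oplus\sigma_{>0}X$, and applying Proposition \ref{prop_classes}(iii) and (iv) to see that both summands are trivially fibrant (resp.\ trivially cofibrant). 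Relatedly, you only discuss stalks $\iota^0A$ with $A\in\calC\cap\calF$, i.e.\ the unit side; the other half of the Quillen-equivalence criterion --- that every bifibrant complex is weakly equivalent to one arising from $\scA$, i.e.\ essential surjectivity of the derived functor --- is not addressed, and it needs precisely the same truncation/orthogonality argument together with the complete-resolution construction. Your closing caveat correctly flags that this needs checking, but the check is the proof, and the idea needed to carry it out (Proposition \ref{prop_classes}(iii),(iv) applied to the truncation splitting) does not appear in the proposal.
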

In particular, one has to beware that the derived adjoint equivalences of \eqref{eq:frobeniuslift1} and
\eqref{eq:frobeniuslift2} are \emph{not} isomorphic, but are shifts of one another.
\begin{example}\label{ex_liftgorenstein}
Suppose $R$ is a Gorenstein ring and consider Hovey's Gorenstein projective model structure
$\calM\ugproj(R)=(\gproj(R),\calP^{<\infty}(R),R\Mod)$ on $R\Mod$. In this case, we recover some constructions and
results of \cite[Sections 2, 3.1]{Becker_ModelsForSingularityCategories}: Namely,
$(\wt{\calC}\cap\dw\calC\cap\calW,?,\dg\wt{\calF})$ coincides with the projective singular contraderived model structure
${^{p}}\calM\uctr\lsing(R):=(\Ch(\Proj(R)),?,\Ch(R))$ (see \cite[Proposition
2.2.1]{Becker_ModelsForSingularityCategories}) since the syzygies of any acyclic complex of projectives are
automatically Gorenstein projective. The equivalence
\eqref{eq:frobeniuslift2} therefore agrees with the Quillen equivalence obtained in \cite[Proposition
3.1.3]{Becker_ModelsForSingularityCategories}. Also, we claim that
$(\dg\wt{\calC}\cap\dw\calC\cap\calW,?,\wt{\calF})$ coincides with the
classical singular contraderived model structure $\calM\lsing\uctr(R):=(\Ch(\Proj(R)),?,\Acyc(R))$ (see \cite[Definition
2.1.2]{Becker_ModelsForSingularityCategories}), i.e. that
$\Ch(\Proj(R))\subseteq{^{\perp}}\wt{\calI^{<\infty}}$: For this, note first that for $P\in\Ch(\Proj(R))$ the short exact
sequence $0\to P\to G^+(\Sigma P\us)\to \Sigma P\to 0$ exhibits $P$ as the syzygy of $\Sigma P$ in the abelian
category $\Ch(R)$, since $G^+(\Sigma P\us)$ is projective in $\Ch(R)$; see \cite[Section
1.3]{Becker_ModelsForSingularityCategories} for the relevant results and notions. Iterating this procedure, we see that
any $P\in\Ch(\Proj(R))$ is an arbitrarily high syzygy in $\Ch(R)$. On the other hand, any
complex in $\wt{\calI^{<\infty}}$ admits a finite resolution by contractible complexes of injectives, i.e. has finite
injective dimension in the abelian category $\Ch(R)$. The claim $\Ch(\Proj(R))\subseteq{^{\perp}}\wt{\calI^{<\infty}}$
follows.
\end{example}
\begin{proof}[Proof of Theorem \ref{thm_lifttocomplexes}]
It only remains to prove that \eqref{eq:frobeniuslift1} and \eqref{eq:frobeniuslift2} are Quillen equivalences. We start
by checking that \eqref{eq:frobeniuslift1} is a Quillen adjunction. Suppose $\iota: X\to Y$ is a cofibration in
$(\wt{\calC}\cap\dw\calC\cap\calW,?,\dg\wt{\calF})$, i.e. $\iota$ is a monomorphism in $\Ch(\scA)$ with cokernel $Z :=
\coker(\iota)$ belonging to $\wt{\calC}\cap\dw\calC\cap\calW$. Since $\wt{\calC}\subset\Acyc(\scA)$, it follows as in
\cite[Proposition 3.1.3]{Becker_ModelsForSingularityCategories} that $0\to \cosyz^0 X\to \cosyz^0 Y\to \cosyz^0 Z\to 0$ is exact in
$\scA$, and as $\cosyz^0 Z\in\calC$ by definition, it follows that $\cosyz^0\iota$ is a cofibration in $(\calC,\calW,\calF)$. If
$\iota$ is a trivial cofibration,
then $Z\in{^{\perp}}\dg\wt{\calF}=\wt{\calC\cap\calW}$, hence exact, and we deduce an exact sequence $0\to \cosyz^0 X\to \cosyz^0
Y\to \cosyz^0 Z\to 0$ with $\cosyz^0 Z\in \calC\cap\calW$. This shows that \eqref{eq:frobeniuslift1} is a Quillen adjunction, and for
\eqref{eq:frobeniuslift2} the proof is analogous.

Next we prove that \eqref{eq:frobeniuslift1} is a Quillen equivalence. In the one direction, consider a bifibrant
$X$ in $(\wt{\calC}\cap\dw\calC\cap\calW,?,\dg\wt{\calF})$, that is,
$X\in\wt{\calC}\cap\dg\wt{\calF}\cap\dw\calC\cap\calW$. Since $\dg\wt{\calF}\cap\Acyc(\scA)=\wt{\calF}$, we have
$X\in\wt{\calC}\cap\wt{\calF}\cap\dw\calC\cap\calW$, and in particular $\cosyz^0 X\in\calF$ is fibrant. Hence, to show the
derived unit is an equivalence, it suffices to show that such an $X$, the underived unit $\veps:
X\to \iota^0 \cosyz^0(X)$ is a weak equivalence in $(\wt{\calC}\cap\dw\calC\cap\calW,?,\dg\wt{\calF})$. We have that $\veps$
is an epimorphism and $\ker(\veps) \cong
\tau_{\leq 0}X\oplus\sigma_{>0}X$, and we consider the two summands separately. The first summand $\tau_{\leq 0} X$ belongs
to $\Ch^-(\scA)\cap\wt{\calF}$ which is contained in
$\left[\dw\calC\cap\calW\right]^{\perp}$ by Proposition \ref{prop_classes}(iv), hence trivially fibrant in
$(\wt{\calC}\cap\dw\calC\cap\calW,?,\dg\wt{\calF})$. The second summand $\sigma_{>0}(X)$ belongs to
$\Ch^+(\calW\cap\calF)$ which is contained in $\wt{\calC}^{\perp}$ by Proposition \ref{prop_classes}(iii), hence
trivially fibrant, too. It follows that $\veps: X\to \iota^0 \cosyz^0 (X)$ is indeed weak equivalence in
$(\wt{\calC}\cap\dw\calC\cap\calW,?,\dg\wt{\calF})$.

We have just proved that the derived unit $\id\Rightarrow\bfR\iota^0\circ\bfL \cosyz^0$ is an equivalence,
which means that $\bfL \cosyz^0$ is fully faithful. To prove that $\bfL \cosyz^0\dashv\bfR\iota^0$ is an equivalence, it is
therefore enough to show that $\bfL \cosyz^0$ is also essentially surjective. For this, it suffices to check that any
bifibrant $M\in\calC\cap\calF$ occurs as the $0$-th syzygy $\cosyz^0 X$ of some bifibrant ``complete resolution'' $X\in
\wt{\calC}\cap\wt{\calF}\cap\dw\calC\cap\calW$. Such a resolution can be built inductively using the completeness of the
cotorsion pairs $(\calC\cap\calW,\calF)$ and $(\calC,\calW\cap\calF)$.

The proof that \eqref{eq:frobeniuslift2} is a Quillen equivalence is analogous.
\end{proof}
\begin{corollary}
Any hereditary and cofibrantly generated abelian model structure on a Grothendieck category is Quillen equivalent to
an abelian model structure on $\Ch(\scA)$.
\end{corollary}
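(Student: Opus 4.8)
The plan is to read this off directly from Theorem~\ref{thm_lifttocomplexes}. Given a cofibrantly generated, hereditary abelian model structure $\calM=(\calC,\calW,\calF)$ on a Grothendieck category $\scA$, the chain category $\Ch(\scA)$ is again Grothendieck and, by Theorem~\ref{thm_lifttocomplexes}, carries the cofibrantly generated abelian model structure $\wt{\calM}:=(\wt{\calC}\cap\dw\calC\cap\calW,?,\dg\wt{\calF})$, built from the classes of Definition~\ref{def_classes}. So the only thing to point to is a Quillen equivalence between $\wt{\calM}$ and $\calM$.

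First I would invoke the Quillen equivalence~\eqref{eq:frobeniuslift1} of Theorem~\ref{thm_lifttocomplexes}, namely $\cosyz^{0}\colon\wt{\calM}\rightleftarrows(\calC,\calW,\calF)\colon\iota^{0}$. This exhibits $\calM$ as Quillen equivalent to the abelian model structure $\wt{\calM}$ on $\Ch(\scA)$, which is exactly the assertion. Equivalently one may use the model structure $(\dg\wt{\calC},?,\wt{\calF}\cap\dw\calW\cap\calF)$ together with the Quillen equivalence~\eqref{eq:frobeniuslift2}, $\iota^{0}\dashv\syz^{0}$; the two choices differ by a shift on homotopy categories, as noted after Theorem~\ref{thm_lifttocomplexes}.

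There is no genuine obstacle here: the corollary is a direct consequence, indeed essentially a restatement, of part of Theorem~\ref{thm_lifttocomplexes}. It is worth recording on its own because its conclusion---that one can always trade a model structure on $\scA$ for a Quillen equivalent one on $\Ch(\scA)$---is the homotopy-theoretic counterpart of the classical fact (Theorem~\ref{theorem_frobeniusclassical}) that the stable category of a Frobenius category is realized as a homotopy category of acyclic complexes, and it is this passage to complexes that underlies the construction of the realization functor in Theorem~\ref{thm_leftrightstabilization}.
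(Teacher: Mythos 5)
Your proposal is correct and matches the paper exactly: the corollary is stated immediately after Theorem~\ref{thm_lifttocomplexes} precisely because it is read off from the Quillen equivalence~\eqref{eq:frobeniuslift1} (or equally~\eqref{eq:frobeniuslift2}) with the cofibrantly generated abelian model structure $(\wt{\calC}\cap\dw\calC\cap\calW,?,\dg\wt{\calF})$ on $\Ch(\scA)$. Nothing further is needed.
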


Next we study the model structures induced by the dashed arrows in Figure \ref{fig_bunch}.

\begin{proposition}\label{prop_bunchderived}
Let $\calM=(\calC,\calW,\calF)$ be a cofibrantly generated, hereditary abelian model structure on the Grothendieck
category $\scA$. Then there is a square of cofibrantly generated abelian model structures on $\Ch(\scA)$ and identity
Quillen equivalences:
\begin{equation}\label{eq:liftsquarederived}\begin{tikzpicture}[baseline,description/.style={fill=white,inner sep=2pt}]
    \node (m-1-1) at (-3.5cm,0) {$(\dg\wt{\calC\cap\calW},\Acyc(\scA),\dg\wt{\calF})$};
    \node (m-1-2) at (3cm,0)
    {$(\dg\wt{\calC}\cap\dw\calC\cap\calW,\Acyc(\scA),(\wt{\calC}\cap\dw\calC\cap\calW)^{\perp})$};
    \node (m-2-1) at (-2cm,-4em)
    {$({^{\perp}}(\wt{\calF}\cap\dw\calW\cap\calF),\Acyc(\scA),\dg\wt{\calF}\cap\dw\calW\cap\calF)$};
    \node (m-2-2) at (4.5cm,-4em) {$(\dg\wt{\calC},\Acyc(\scA),\dg\wt{\calW\cap\calF})$};
    \draw[->,transform canvas={yshift=0.8mm}]  (m-1-1) -- node[above,scale=0.75]{$\adjL$} (m-1-2);
    \draw[->,transform canvas={yshift=-0.8mm}]  (m-1-2) -- node[below,scale=0.75]{$\adjR$} (m-1-1);
    \draw[->,transform canvas={yshift=0.8mm}]  (m-2-1) -- node[above,scale=0.75]{$\adjL$} (m-2-2);
    \draw[->,transform canvas={yshift=-0.8mm}]  (m-2-2) -- node[below,scale=0.75]{$\adjR$} (m-2-1);
    \draw[->,transform canvas={xshift=-0.8mm}]  (m-1-1) -- node[left,scale=0.75]{$\adjL$} ($(m-1-1 |- m-2-1) +
    (0,3mm)$);
    \draw[->,transform canvas={xshift=+0.8mm}]  ($(m-1-1 |- m-2-1) + (0,3mm)$) -- node[right,scale=0.75]{$\adjR$}
    (m-1-1);
    \draw[->,transform canvas={xshift=-0.8mm}]  (m-2-2) -- node[left,scale=0.75]{$\adjR$} ($(m-2-2 |- m-1-2) +
    (0,-3mm)$);
    \draw[->,transform canvas={xshift=+0.8mm}]  ($(m-2-2 |- m-1-2) + (0,-3mm)$) -- node[right,scale=0.75]{$\adjL$} (m-2-2);
\end{tikzpicture}\end{equation}
Their homotopy categories are equivalent to the ordinary derived category $\bfD(\scA)$.
\end{proposition}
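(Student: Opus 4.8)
The plan is to realize the four model structures in \eqref{eq:liftsquarederived} as the localizations $\Loc(\alpha)$ of the four dashed arrows $\alpha$ of Figure \ref{fig_bunch}. These arrows are localization contexts by Theorem \ref{thm_bunch}, so Theorem \ref{thm_gillespie} attaches to each a unique cofibrantly generated abelian model structure, and unwinding the prescription $\Loc((\calD',\calE')\locarrow{}(\calD,\calE))=(\calD,?,\calE')$ recovers precisely the cofibrant and fibrant classes displayed. Two of the four are recognized at once: Corollary \ref{cor_gillespiestovicek}, applied to the small hereditary cotorsion pairs $(\calC\cap\calW,\calF)$ and $(\calC,\calW\cap\calF)$ on $\scA$, exhibits $(\dg\wt{\calC\cap\calW},\Acyc(\scA),\dg\wt{\calF})$ and $(\dg\wt{\calC},\Acyc(\scA),\dg\wt{\calW\cap\calF})$ as cofibrantly generated abelian model structures with class of weakly trivial objects $\Acyc(\scA)$ and homotopy category $\bfD(\scA)$, and the uniqueness clause of Theorem \ref{thm_gillespie} forces these to coincide with the corresponding two localizations.

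The main work is to show that the remaining two, $\calM_1:=(\dg\wt{\calC}\cap\dw\calC\cap\calW,?,(\wt{\calC}\cap\dw\calC\cap\calW)^{\perp})$ and $\calM_2:=({}^{\perp}(\wt{\calF}\cap\dw\calW\cap\calF),?,\dg\wt{\calF}\cap\dw\calW\cap\calF)$, likewise have $\Acyc(\scA)$ as their class $\calW$ of weakly trivial objects. Writing the defining dashed arrow as $(\calD',\calE')\locarrow{}(\calD,\calE)$, the two cotorsion pairs of $\Loc$ are $(\calD,\calE)$ and $(\calD',\calE')$ (so $\calE$ is the trivially fibrant and $\calD'$ the trivially cofibrant class), and \eqref{eq:thmgillespiew} presents $\calW$ as the class of complexes $X$ admitting short exact sequences $0\to X\to E\to D\to 0$ and $0\to E'\to D'\to X\to 0$ with $E,E'\in\calE$ and $D,D'\in\calD'$. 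In both cases $\calD'$ and $\calE$ lie in $\Acyc(\scA)$: for $\calM_1$ one has $\calD'=\wt{\calC}\cap\dw\calC\cap\calW\subseteq\wt{\calC}$, and since $\dg\wt{\calC\cap\calW}\subseteq\dg\wt{\calC}\cap\dw\calC\cap\calW$ also $\calE=(\dg\wt{\calC}\cap\dw\calC\cap\calW)^{\perp}\subseteq(\dg\wt{\calC\cap\calW})^{\perp}=\wt{\calF}$ by Theorem \ref{theorem_gillespiestovicek}(i); for $\calM_2$ the argument is dual (using Theorem \ref{theorem_gillespiestovicek}(ii)). Hence $\calW\subseteq\Acyc(\scA)$ by thickness of $\Acyc(\scA)$. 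For the reverse inclusion, given an acyclic $X$ one takes a special preenvelope and a special precover of $X$ with respect to whichever of $(\calD,\calE)$, $(\calD',\calE')$ has a side of the form $\dg\wt{(\,\cdot\,)}\cap\dw(\,\cdot\,)$ — namely $(\calD,\calE)$ for $\calM_1$, whose $\calD$ is such, and $(\calD',\calE')$ for $\calM_2$, whose $\calE'$ is such. Since $\calD',\calE\subseteq\Acyc(\scA)$ and $X$ is acyclic, the remaining term of each of the four sequences so produced is automatically acyclic, whereupon Theorem \ref{theorem_gillespiestovicek}(iii) — $\dg\wt{\calC}\cap\Acyc(\scA)=\wt{\calC}$ and $\dg\wt{\calF}\cap\Acyc(\scA)=\wt{\calF}$ — upgrades that term from its ``dg''-version to its tilde-version, yielding exactly two sequences of the shape needed to conclude $X\in\calW$. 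This interplay of \eqref{eq:thmgillespiew}, the cotorsion pairs of Theorem \ref{theorem_gillespiestovicek}, and the identity $\dg\wt{(\,\cdot\,)}\cap\Acyc(\scA)=\wt{(\,\cdot\,)}$ is, I expect, the real obstacle.

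Finally one feeds this into the model-category formalism. Each identity functor in \eqref{eq:liftsquarederived} is a Quillen adjunction, because for abelian model structures on $\Ch(\scA)$ the identity $\id\colon(\calC_a,\calW_a,\calF_a)\to(\calC_b,\calW_b,\calF_b)$ is left Quillen exactly when $\calC_a\subseteq\calC_b$ and $\calF_b\subseteq\calF_a$, and each such inclusion is either an arrow of Figure \ref{fig_bunch} or a formal consequence of Proposition \ref{prop_classes} and Theorem \ref{theorem_gillespiestovicek}. Moreover such an identity adjunction is automatically a Quillen equivalence once its two model structures share the class $\calW$: the weak equivalences of an abelian model structure $(\calC,\calW,\calF)$ are precisely the maps factoring as a monomorphism with cokernel in $\calW$ followed by an epimorphism with kernel in $\calW$ — the only nonformal point being that these monos and epis are weak equivalences, which one sees by factoring through a (trivial) cofibration or fibration and invoking thickness of $\calW$ — so this class of maps depends on $\calW$ alone, and an identity functor between abelian model structures with the same weak equivalences induces an equivalence on homotopy categories, both being the localization of $\Ch(\scA)$ at that common class of weak equivalences. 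Since all four of our model structures have $\calW=\Acyc(\scA)$, whose associated weak equivalences are the quasi-isomorphisms, the eight identity functors in \eqref{eq:liftsquarederived} are Quillen equivalences, and by Corollary \ref{cor_gillespiestovicek} their common homotopy category is $\bfD(\scA)$.
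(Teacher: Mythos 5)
Your proof is correct and follows essentially the same route as the paper: realize the four triples as Gillespie localizations of the dashed arrows of Figure \ref{fig_bunch}, identify two of them via Corollary \ref{cor_gillespiestovicek}, and show the other two have weakly trivial class $\Acyc(\scA)$ by combining approximation sequences for the complete cotorsion pairs with the identities $\dg\wt{\calC}\cap\Acyc(\scA)=\wt{\calC}$, $\dg\wt{\calF}\cap\Acyc(\scA)=\wt{\calF}$. You are in fact slightly more explicit than the paper, which produces only one of the two sequences in \eqref{eq:thmgillespiew} and leaves the Quillen-equivalence formalities implicit.
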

\begin{proof}
Applying Gillespie's Theorem \ref{thm_gillespie} to the dashed arrows in Figure \ref{fig_bunch} gives four model structures
matching the triples listed in \eqref{eq:liftsquarederived} in the left and right hand parts; it therefore suffices to check
that their classes of weakly trivial objects all coincide with the class $\Acyc(\scA)$ of acyclic complexes.

For the model structures associated to the arrows
$(\wt{\calC\cap\calW},\dg\wt{\calF})\to(\dg\wt{\calC\cap\calW},\wt{\calF})$ and
$(\wt{\calC},\dg\wt{\calW\cap\calF})\to(\dg\wt{\calC},\dg\wt{\calW\cap\calF})$ we already know this from Corollary
\ref{cor_gillespiestovicek} above. Next, consider model structure associated to
$(\wt{\calC}\cap\dw\calC\cap\calW,\perp)\to(\dg\wt{\calC}\cap\dw\calC\cap\calW,\perp)$: By \eqref{eq:thmgillespiew}
the weakly trivial objects in the associated model structure are those $X\in\Ch(\scA)$ which admit a short exact
sequence of the form $0\to F\to C\to X\to 0$ with $F\in\left[\dg\wt{\calC}\cap\dw\calC\cap\calW\right]^{\perp}\subset\wt{\calF}$ and
$C\in\wt{\calC}\cap\dw\calC\cap\calW$. Note that $F,C\in\Acyc(\scA)$, so the existence of such a sequence implies that
$X\in\Acyc(\scA)$. Conversely, suppose $X\in\Acyc(\scA)$ and pick an approximation sequence $0\to F\to C\to X\to 0$ for
the cotorsion pair $(\dg\wt{\calC}\cap\dw\calC\cap\calW,\left[\dg\wt{\calC}\cap\dw\calC\cap\calW\right]^{\perp})$. Then
again $F\in\Acyc(\scA)$, and also $X\in\Acyc(\scA)$ by assumption, so $C\in
\dg\wt{\calC}\cap\Acyc(\scA)\cap\dw\calC\cap\calW=\wt{\calC}\cap\dw\calC\cap\calW$.

The proof that the weak equivalences in the model structure associated to the arrow
$(\perp,\dg\wt{\calF}\cap\dw\calW\cap\calF)\to(\perp,\wt{\calF}\cap\dw\calW\cap\calF)$ in Figure \ref{fig_bunch} is analogous.
\end{proof}

Finally, we study the relation between the model structures induced by the tilted squares in Figure \ref{fig_bunch}, beginning
with some observations that hold in general:

\begin{proposition}\label{prop_locconcomp}
For localization contexts $(\calD\pp,\calE\pp)\locarrow{\alpha}(\calD\p,\calE\p)\locarrow{\beta}(\calD,\calE)$ their
induced model structures are related  via a localization sequence of triangulated categories:
\begin{equation}\label{eq:localizationcomposition}\begin{tikzpicture}[baseline,description/.style={fill=white,inner sep=2pt}]
    \matrix (m) [matrix of math nodes, row sep=0.75em,
                 column sep=5.5em, text height=1.5ex, text depth=0.25ex,
                 inner sep=0pt, nodes={inner xsep=0.3333em, inner ysep=0.3333em}]
    {
       \Loc(\alpha) & \Loc(\beta\circ\alpha) & \Loc(\beta)\\
       (\calD\p,?,\calE\pp) & (\calD,?,\calE\pp) & (\calD,?,\calE\p)\\
    };
    \draw[shorten >=0.12cm,shorten <=0.12cm,->,transform canvas={yshift=-0.4cm}] ($(m-1-1.east) + (0,0.8mm)$) --
    node[above,scale=0.75]{$\bfL\id$} ($(m-1-2.west) + (0,0.8mm)$);
    \draw[shorten >=0.12cm,shorten <=0.12cm,->,transform canvas={yshift=-0.4cm}] ($(m-1-2.west) - (0,0.8mm)$) --
    node[below,scale=0.75]{$\bfR\id$} ($(m-1-1.east) - (0,0.8mm)$);
    \draw[shorten >=0.12cm,shorten <=0.12cm,->,transform canvas={yshift=-0.4cm}] ($(m-1-2.east) + (0,0.8mm)$) --
    node[above,scale=0.75]{$\bfL\id$} ($(m-1-3.west) + (0,0.8mm)$);
    \draw[shorten >=0.12cm,shorten <=0.12cm,->,transform canvas={yshift=-0.4cm},] ($(m-1-3.west) - (0,0.8mm)$) --
    node[below,scale=0.75]{$\bfR\id$} ($(m-1-2.east) - (0,0.8mm)$);
    \draw[double, double distance=0.8mm] (m-2-1) -- (m-1-1);
    \draw[double, double distance=0.8mm] (m-2-2) -- (m-1-2);
    \draw[double, double distance=0.8mm] (m-2-3) -- (m-1-3);
\end{tikzpicture}\end{equation}
\end{proposition}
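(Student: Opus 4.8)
The plan is to show that the three identity functors relating $\Loc(\alpha)$, $\Loc(\beta\circ\alpha)$ and $\Loc(\beta)$ are left Quillen functors whose derived functors form the displayed localization sequence; throughout write $\calW_\alpha,\calW_{\beta\alpha},\calW_\beta$ for the classes of weakly trivial objects of the three structures. First I would note that $\beta\circ\alpha$ is again a localization context — the cotorsion pairs $(\calD\pp,\calE\pp)$ and $(\calD,\calE)$ are complete and hereditary, share the core $\calD\pp\cap\calE\pp=\calD\p\cap\calE\p=\calD\cap\calE$, and satisfy $\calD\pp\subseteq\calD\p\subseteq\calD$ — so that $\Loc(\alpha)=(\calD\p,?,\calE\pp)$, $\Loc(\beta\circ\alpha)=(\calD,?,\calE\pp)$ and $\Loc(\beta)=(\calD,?,\calE\p)$ are all defined, with defining cotorsion pairs (via Theorem \ref{thm_gillespie}) $\{(\calD\p,\calE\p),(\calD\pp,\calE\pp)\}$, $\{(\calD,\calE),(\calD\pp,\calE\pp)\}$ and $\{(\calD,\calE),(\calD\p,\calE\p)\}$ respectively. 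Recalling that in an abelian model structure $(\calC,\calW,\calF)$ the cofibrations are the monomorphisms with cokernel in $\calC$, the trivial cofibrations those with cokernel in $\calC\cap\calW={}^{\perp}\calF$, and dually for (trivial) fibrations, one reads off that ${}^{\perp}\calF$ equals $\calD\pp$ for $\Loc(\alpha)$ and $\Loc(\beta\circ\alpha)$ but $\calD\p$ for $\Loc(\beta)$, and $\calC^{\perp}=\calW\cap\calF$ equals $\calE\p$ for $\Loc(\alpha)$ but $\calE$ for the other two. Since $\calD\pp\subseteq\calD\p\subseteq\calD$ (and dually $\calE\subseteq\calE\p\subseteq\calE\pp$), the identity therefore defines left Quillen functors $\Loc(\alpha)\xrightarrow{\id}\Loc(\beta\circ\alpha)\xrightarrow{\id}\Loc(\beta)$: on the left the cofibrations enlarge ($\calD\p\subseteq\calD$) while the trivial cofibrations are unchanged (cokernel in $\calD\pp$), and on the right the cofibrations are unchanged (cokernel in $\calD$) while the trivial cofibrations enlarge ($\calD\pp\subseteq\calD\p$).

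Next I would check the full faithfulness of the two outer derived functors. The bifibrant objects of the three structures are $\calD\p\cap\calE\pp$, $\calD\cap\calE\pp$ and $\calD\cap\calE\p$, so every bifibrant object of $\Loc(\alpha)$ or of $\Loc(\beta)$ is already bifibrant in $\Loc(\beta\circ\alpha)$. Evaluating the unit of $\bfL\id\dashv\bfR\id$ for $\Loc(\alpha)\to\Loc(\beta\circ\alpha)$, respectively the counit for $\Loc(\beta\circ\alpha)\to\Loc(\beta)$, on such an object is then the identity (no cofibrant or fibrant replacement intervenes), whence $\bfL\id\colon\Ho(\Loc(\alpha))\to\Ho(\Loc(\beta\circ\alpha))$ and $\bfR\id\colon\Ho(\Loc(\beta))\to\Ho(\Loc(\beta\circ\alpha))$ are both fully faithful. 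In particular $\bfL\id$ realizes $\Ho(\Loc(\alpha))$ as a coreflective triangulated subcategory of $\Ho(\Loc(\beta\circ\alpha))$, and $\bfL\id\colon\Ho(\Loc(\beta\circ\alpha))\to\Ho(\Loc(\beta))$ is a Verdier localization whose fully faithful right adjoint is $\bfR\id$.

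The crux is then to identify $\ker\bigl(\bfL\id\colon\Ho(\Loc(\beta\circ\alpha))\to\Ho(\Loc(\beta))\bigr)$ with the essential image of $\bfL\id\colon\Ho(\Loc(\alpha))\to\Ho(\Loc(\beta\circ\alpha))$. A bifibrant $X\in\calD\cap\calE\pp$ of $\Loc(\beta\circ\alpha)$ becomes zero in $\Ho(\Loc(\beta))$ exactly when $X\in\calW_\beta$. For one inclusion, the bifibrant objects $\calD\p\cap\calE\pp$ of $\Loc(\alpha)$ satisfy $\calD\p={}^{\perp}\calE\p\subseteq\calW_\beta$ (indeed $\calD\p$ is the class of trivially cofibrant objects of $\Loc(\beta)$), so the essential image of $\bfL\id$ from $\Ho(\Loc(\alpha))$ lies in the kernel; in particular the composite $\Ho(\Loc(\alpha))\to\Ho(\Loc(\beta\circ\alpha))\to\Ho(\Loc(\beta))$ vanishes. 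For the reverse inclusion, let $X\in\calD\cap\calE\pp$ lie in $\calW_\beta$; then the description \eqref{eq:thmgillespiew} of $\calW_\beta$, applied to the cotorsion pairs $(\calD,\calE)$ and $(\calD\p,\calE\p)$ defining $\Loc(\beta)$, yields a short exact sequence $0\to E\to D\to X\to 0$ with $E\in\calE$ and $D\in\calD\p$. Since $\calE\subseteq\calE\pp$ and $\calE\pp$ is closed under extensions, $D\in\calD\p\cap\calE\pp$ is bifibrant in $\Loc(\alpha)$; and $D\to X$ is an epimorphism whose kernel $E$ lies in $\calE=\calW_{\beta\alpha}\cap\calE\pp\subseteq\calW_{\beta\alpha}$, hence is a weak equivalence in $\Loc(\beta\circ\alpha)$. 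Thus $X$ is isomorphic in $\Ho(\Loc(\beta\circ\alpha))$ to the image of the bifibrant $\Loc(\alpha)$-object $D$, so $X$ lies in the essential image.

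Finally I would assemble: $\bfL\id$ is fully faithful with right adjoint $\bfR\id$, the functor $\bfL\id\colon\Ho(\Loc(\beta\circ\alpha))\to\Ho(\Loc(\beta))$ is a Verdier quotient with fully faithful right adjoint, and its kernel equals the essential image of $\Ho(\Loc(\alpha))$ — which is precisely the statement that \eqref{eq:localizationcomposition} is a localization sequence. The only step I expect to require a genuine (if small) idea, rather than bookkeeping with cofibration and fibration classes, is the surjectivity onto the kernel above, where one feeds the explicit formula for $\calW_\beta$ from Theorem \ref{thm_gillespie} into the completeness of the cotorsion pair $(\calD\p,\calE\p)$ to produce the bifibrant approximation $D\to X$.
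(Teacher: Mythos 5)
Your proposal is correct and follows essentially the same route as the paper: full faithfulness of the outer derived identities via the fact that the bifibrant objects of $\Loc(\alpha)$ resp. $\Loc(\beta)$ are bifibrant in $\Loc(\beta\circ\alpha)$ with the same core, and exactness by feeding Gillespie's description \eqref{eq:thmgillespiew} of the weakly trivial objects of $\Loc(\beta)$ into a short exact sequence $0\to E\to D\to X\to 0$ with $E\in\calE$ trivially fibrant in $\Loc(\beta\circ\alpha)$ and $D\in\calD\p$, so that $X\cong D$ identifies the kernel with the image. The only (harmless) additions beyond the paper's proof are the explicit verifications that $\beta\circ\alpha$ is a localization context and that the identities are Quillen, which the paper leaves implicit.
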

\begin{proof}
For $\omega$ the common core of the given cotorsion pairs, we have $\Ho\Loc(\alpha)\cong\calC\p\cap\calE\pp/\omega$ and
$\Ho\Loc(\beta\circ\alpha)\cong \calC\cap\calE\pp / \omega,$ and
$\bfL\id: \Ho\Loc(\alpha)\to\Ho\Loc(\beta\circ\alpha)$ is the canonical functor
$\calC\p\cap\calE\pp/\omega\to\calC\cap\calE\pp/\omega$, hence fully faithful. Similarly, $\bfR\id:
\Ho\Loc(\beta)\to\Ho\Loc(\beta\circ\alpha)$ is given by the embedding of $\calD\cap\calE\p/\omega$ into
$\calD\cap\calE\pp/\omega$, hence fully faithful.

It remains to prove the exactness of \eqref{eq:localizationcomposition}. Up to isomorphism in
$\Ho\Loc(\beta\circ\alpha)$, $\ker[\Ho\Loc(\beta\circ\alpha)\xrightarrow{\bfL\id}\Ho\Loc(\beta)]$ consists of those
$D\in\calD$ admitting a short exact sequence $0\to E\to D\p\to D\to 0$ with $E\in\calE$, $D\p\in\calD\p$ (recall that
these characterize the weakly trivial objects in $\Loc(\beta)$). As $\calE$ is the class of
trivially fibrant objects of $\Loc(\beta\circ\alpha)$, such a sequence already implies $D\cong D\p$ in
$\Ho\Loc(\beta\circ\alpha)$. Hence, up to isomorphism in $\Ho\Loc(\beta\circ\alpha)$,
$\ker[\Ho\Loc(\beta\circ\alpha)\xrightarrow{\bfL\id}\Ho\Loc(\beta)]$ consists of the objects of $\calD\p$, and the same
is true for $\image[\Ho\Loc(\alpha)\xrightarrow{\bfL\id}\Ho\Loc(\beta\circ\alpha)]$ by definition of $\bfL\id$.
\end{proof}
\begin{example}\label{ex_coctrgeneral} Proposition \ref{prop_locconcomp} applies to the chain of localization contexts
\begin{align}\label{eq:cogeneral}
(\perp,\dw\calW\cap\calF)\to(\wt{\calC},\dg\wt{\calW\cap\calF})\to(\dg\wt{\calC},\wt{\calW\cap\calF}),\\\label{eq:ctrgeneral}
(\wt{\calC\cap\calW},\dg\wt{\calF})\to(\dg\wt{\calC\cap\calW},\wt{\calF})\to(\dw\calC\cap\calW,\perp)
\end{align}
in the upper right resp. lower left corner of Figure \ref{fig_bunch}. The model structures
\begin{align}\label{eq:coctrgeneral}
(\dg\wt{\calC},?,\dw\calW\cap\calF)\quad\text{and}\quad (\dw\calC\cap\calW,?,\dg\wt{\calF})
\end{align}
associated with the composed localization contexts are generalizations of both the injective/projective models for
$\bfD(\scA)$ and the contraderived/coderived model structures:
\begin{enumerate}
\item[--] Choosing $(\calC,\calW,\calF)$ as
$(\scA,\scA,\inj(\scA))$, we have $(\dg\wt{\calC},?,\dw\calW\cap\calF)=\calM\uco(\scA)$ and $(\dw\calC\cap\calW,?,\dg\wt{\calF})
= \calM\uinj(\scA)$.
\item[--] Choosing $(\calC,\calW,\calF)$ as $(\proj(\scA),\scA,\scA)$ in case $\scA$ has enough
projectives, we have $(\dg\wt{\calC},?,\dw\calW\cap\calF)=\calM\uproj(\scA)$ and $(\dw\calC\cap\calW,\dg\wt{\calF}) =
\calM\uctr(\scA)$.
\end{enumerate}
Further, in the case of $(\scA,\scA,\inj(\scA))$ the localization sequences induced by
\eqref{eq:cogeneral} is the known one $\bfD(\scA)\rightleftarrows \bfK(\inj(\scA))\rightleftarrows\bfK\lac(\inj(\scA))$,
while the one induced by \eqref{eq:ctrgeneral} is the trivial localization sequence
$0\rightleftarrows\bfD(\scA)\rightleftarrows\bfD(\scA)$. Similarly, in the case of $(\proj(\scA),\scA,\scA)$, the
localization sequence associated with \eqref{eq:cogeneral} is trivial, while the one associated with
\eqref{eq:ctrgeneral} is the classical one
$\bfK\lac(\proj(\scA))\rightleftarrows\bfK(\proj(\scA))\rightleftarrows\bfD(\scA)$.
\end{example}
\begin{fact}\label{fact_locconsquare}
Suppose given a square of localization contexts
\begin{equation*}\begin{tikzpicture}[description/.style={fill=white,inner sep=2pt}]
    \matrix (m) [matrix of math nodes, row sep=2.5em,
                 column sep=3.5em, text height=1.5ex, text depth=0.25ex,
                 inner sep=0pt, nodes={inner xsep=0.3333em, inner ysep=0.3333em}]
    {
      (\calD\ppp,\calE\ppp) & (\calD\pp,\calE\pp) \\
       (\calD\p,\calE\p) & (\calD,\calE)\\
    };
    \draw[->] (m-1-1) -- node[above,scale=0.75]{$\alpha$} (m-1-2);
    \draw[->] (m-2-1) -- node[below,scale=0.75]{$\gamma$} (m-2-2);
    \draw[->] (m-1-1) -- node[left,scale=0.75]{$\beta$} (m-2-1);
    \draw[->] (m-1-2) -- node[right,scale=0.75]{$\delta$} (m-2-2);
    \draw[->] (m-1-1) -- node[description,scale=0.75]{$\veps$} (m-2-2);
\end{tikzpicture}\end{equation*}
Then their localizations fit into a diagram of identity Quillen adjunctions
\begin{align}\label{eq:pseudobutterfly}
\begin{tikzpicture}[baseline,description/.style={fill=white,inner sep=2pt}]
    \matrix (m) [matrix of math nodes, row sep=2em,
                 column sep=4em, text height=1.5ex, text depth=0.25ex,
                 inner sep=0pt, nodes={inner xsep=0.3333em, inner ysep=0.3333em}]
    {
       (\calD\p,?,\calE\ppp) \pgfmatrixnextcell\pgfmatrixnextcell (\calD,?,\calE\p)\\
       \pgfmatrixnextcell (\calD,?,\calE\ppp) \pgfmatrixnextcell \\
       (\calD,?,\calE\pp) \pgfmatrixnextcell\pgfmatrixnextcell (\calD\pp,?,\calE\ppp)\\
    };
    \draw[->,shorten <=0.1cm,shorten >=0.1cm] ($(m-1-1.south) + (1mm,0)$) -- node[scale=0.75,right]{$\adjL$} ($(m-3-1.north) + (1mm,0)$);
    \draw[->,shorten <=0.1cm,shorten >=0.1cm] ($(m-3-1.north) - (+1mm,0)$) -- node[scale=0.75,left]{$\adjR$} ($(m-1-1.south) - (1mm,0)$);
    \draw[->,shorten <=0.1cm,shorten >=0.1cm] ($(m-1-3.south) + (1mm,0)$) -- node[scale=0.75,right]{$\adjR$} ($(m-3-3.north) + (1mm,0)$);
    \draw[->,shorten <=0.1cm,shorten >=0.1cm] ($(m-3-3.north) + (-1mm,0)$) -- node[scale=0.75,left]{$\adjL$} ($(m-1-3.south) - (1mm,0)$);
    \draw[<-,shorten <=0.2cm,transform canvas={yshift=-0.5mm}] ($(m-1-1.south east) - (4mm,0)$) --
    node[scale=0.75,below]{$\adjR$} ($(m-2-2.north west) + (0mm,0)$);
    \draw[<-,shorten <=0.2cm] ($(m-2-2.north west) + (4mm,0)$) -- node[scale=0.75,above]{$\adjL$} ($(m-1-1.south east) - (0mm,0)$);
    \draw[<-,shorten >=0.2cm] ($(m-3-1.north east) + (0mm,0)$) -- node[scale=0.75,below]{$\adjL$} ($(m-2-2.south west) + (4mm,0)$);
    \draw[<-,shorten >=0.2cm,transform canvas={yshift=+0.5mm}] ($(m-2-2.south west) + (0mm,0)$) -- node[scale=0.75,above]{$\adjR$} ($(m-3-1.north east) - (4mm,0)$);
    \draw[<-,shorten >=0.2cm,transform canvas={yshift=-0.5mm}] ($(m-2-2.north east) + (0mm,0)$) -- node[scale=0.75,below]{$\adjR$} ($(m-1-3.south west) + (4mm,0)$);
    \draw[<-,shorten >=0.2cm] ($(m-1-3.south west) + (0mm,0)$) -- node[scale=0.75,above]{$\adjL$} ($(m-2-2.north east) - (4mm,0)$);
    \draw[<-,shorten <=0.2cm] ($(m-2-2.south east) - (4mm,0)$) -- node[scale=0.75,below]{$\adjL$} ($(m-3-3.north west) + (0mm,0)$);
    \draw[<-,shorten <=0.2cm,transform canvas={yshift=+0.5mm}] ($(m-3-3.north west) + (4mm,0)$) -- node[scale=0.75,above]{$\adjR$} ($(m-2-2.south east) + (0mm,0)$);
\end{tikzpicture}
\end{align}
\end{fact}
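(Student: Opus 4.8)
The plan is to reduce everything to the standard recognition criterion for identity Quillen functors and then to recognise the two ``wings'' of \eqref{eq:pseudobutterfly} as instances of Proposition \ref{prop_locconcomp}. First I would record the shape of the localizations involved: for a localization context $\xi\colon(\calX\p,\calY\p)\locarrow{}(\calX,\calY)$, Theorem \ref{thm_gillespie} together with the cotorsion-pair/model-structure correspondence identifies $\Loc(\xi)=(\calX,?,\calY\p)$ as the abelian model structure whose cofibrant class is $\calX$, whose trivially cofibrant class is $\calX\p={}^{\perp}\calY\p$, whose fibrant class is $\calY\p$, and whose trivially fibrant class is $\calY$. I would also recall the elementary criterion: for abelian model structures $\calM_1,\calM_2$ on one and the same abelian category, the identity functor $\id\colon\calM_1\to\calM_2$ is left Quillen exactly when $\calC_1\subseteq\calC_2$ and $\calC_1\cap\calW_1\subseteq\calC_2\cap\calW_2$ (equivalently, when $\calF_2\subseteq\calF_1$ and $\calW_2\cap\calF_2\subseteq\calW_1\cap\calF_1$), i.e.\ when cofibrations are sent to cofibrations and fibrations are received from fibrations.

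The substance of the argument is then two applications of Proposition \ref{prop_locconcomp}. Applied to the chain $(\calD\ppp,\calE\ppp)\locarrow{\beta}(\calD\p,\calE\p)\locarrow{\gamma}(\calD,\calE)$, it produces the localization sequence
\[
\Loc(\beta)=(\calD\p,?,\calE\ppp)\ \to\ \Loc(\gamma\circ\beta)=(\calD,?,\calE\ppp)\ \to\ \Loc(\gamma)=(\calD,?,\calE\p),
\]
whose two identity Quillen adjunctions are, by inspection, the ``upper'' pair of wing adjunctions of \eqref{eq:pseudobutterfly}. Symmetrically, the chain $(\calD\ppp,\calE\ppp)\locarrow{\alpha}(\calD\pp,\calE\pp)\locarrow{\delta}(\calD,\calE)$ produces
\[
\Loc(\alpha)=(\calD\pp,?,\calE\ppp)\ \to\ \Loc(\delta\circ\alpha)=(\calD,?,\calE\ppp)\ \to\ \Loc(\delta)=(\calD,?,\calE\pp),
\]
the ``lower'' pair of wings. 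Since the composite of two localization contexts depends only on its source and target, $\gamma\circ\beta=\delta\circ\alpha=\veps$; hence the two sequences share the common middle term $\Loc(\veps)=(\calD,?,\calE\ppp)$, which is exactly the object at the centre of \eqref{eq:pseudobutterfly}.

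The two remaining vertical adjunctions come for free by composing wings through the centre: the left vertical is $\Loc(\beta)\to\Loc(\veps)\to\Loc(\delta)$ and the right vertical is $\Loc(\alpha)\to\Loc(\veps)\to\Loc(\gamma)$, each a composite of left Quillen identity functors, hence itself left Quillen; as all functors in sight are identities, these composites are literally the vertical identity adjunctions drawn in \eqref{eq:pseudobutterfly}. (One can just as well verify them directly via the criterion above: $\id\colon\Loc(\beta)\to\Loc(\delta)$ is left Quillen because $\calD\p\subseteq\calD$ and $\calD\ppp\subseteq\calD\pp$, and $\id\colon\Loc(\alpha)\to\Loc(\gamma)$ because $\calD\pp\subseteq\calD$ and $\calD\ppp\subseteq\calD\p$.) Assembling the four wing adjunctions with the two verticals yields the full diagram.

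I do not expect a genuine obstacle here; given Proposition \ref{prop_locconcomp}, the statement is purely formal, which is presumably the reason it is recorded as a ``Fact''. The only point demanding care is bookkeeping: keeping straight the inclusions among cotorsion halves forced by the four edges of the square (for instance $\calE\subseteq\calE\ppp$, obtained by orthogonality from $\calD\ppp\subseteq\calD$), reading off correctly which of these plays the role of the cofibrant, trivially cofibrant, fibrant or trivially fibrant class in each $\Loc(-)$, and checking that the induced Quillen directions agree with those displayed in \eqref{eq:pseudobutterfly}.
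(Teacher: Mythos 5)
Your proposal is correct and is essentially the argument the paper intends: the statement is recorded as a Fact with no written proof, and your verification -- identifying the five entries of \eqref{eq:pseudobutterfly} as $\Loc(\beta)$, $\Loc(\gamma)$, $\Loc(\delta)$, $\Loc(\alpha)$ and the common middle $\Loc(\gamma\circ\beta)=\Loc(\delta\circ\alpha)=\Loc(\veps)$, then checking each identity adjunction via the inclusions of cofibrant and trivially cofibrant classes forced by the square -- is exactly the routine bookkeeping left implicit. One small remark: Proposition \ref{prop_locconcomp} is not actually needed for the Fact itself, since the elementary identity-Quillen criterion already yields all six adjunctions; the exactness of the two wing sequences only becomes relevant afterwards, in the proof of Theorem \ref{thm_chainbutterfly}.
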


\begin{proof}[Proof of Theorem \ref{thm_chainbutterfly}]
According to Fact \ref{fact_locconsquare}, each oriented square in Figure \ref{fig_bunch} gives rise to a diagram of the form
\eqref{eq:pseudobutterfly} in which the upper and lower rows are exact by Proposition \ref{prop_locconcomp}. Applying this to the two tilted squares in Figure
\eqref{fig_bunch} gives rise to the diagram of model structures depicted in Figure \ref{fig_chainbutterfly}.

The left vertical Quillen adjunctions are Quillen equivalences by Theorem \ref{thm_lifttocomplexes}; in fact, they share
their classes of bifibrant objects, as do the two model structures connected by the middle vertical adjunction, which is therefore a
Quillen equivalence, too. Moreover, this argument also proves one half of the wing-commutativity condition
\ref{item:butterflywingsgen} from Definition \ref{def_butterfly} of butterflies. Similarly, the right vertical
adjunctions are Quillen equivalences by Proposition \ref{prop_bunchderived}; in fact, they share their classes of weakly
trivial objects, proving the remaining part of condition \ref{item:butterflywingsgen}.

The remaining properties are established by Proposition \ref{prop_locconcomp}, so the proof is finished.
\end{proof}

\section{Proof of Theorem \ref{thm_leftrightstabilization}}

Denote $\calT$ the common homotopy category of the two middle model structures in Figure \ref{fig_chainbutterfly}; explicitly, this is
the homotopy category of complexes with components in $\omega = \calC\cap\calW\cap\calF$ which belong to both
$\dg\wt{\calC}$ and $\dg\wt{\calF}$. Passing to homotopy categories in Figure \ref{fig_chainbutterfly} now yields a
recollement $$\Ho(\calM)\bigrec\calT\bigrec\bfD(\scA)$$ and we claim that the induced stabilization functor $\comp:
\bfD(\scA)\to\Ho(\calM)$ makes the following diagram commutative:
\begin{equation*}
\begin{tikzpicture}[baseline,description/.style={fill=white,inner sep=2pt}]
    \matrix (m) [matrix of math nodes, row sep=1.5em,
                 column sep=2.5em, text height=1.5ex, text depth=0.25ex,
                 inner sep=0pt, nodes={inner xsep=0.3333em, inner ysep=0.3333em}]
    {
       \Ho(\calM) && \bfD(\scA)\\
       & \scA & \\
    };
    \draw[->,rounded corners=4mm] (m-2-2) -| (m-1-3);
    \draw[->,rounded corners=4mm] (m-2-2) -| (m-1-1);
    \draw[->] (m-1-3) -- node[above,scale=0.75]{$\comp$} (m-1-1);
\end{tikzpicture}
\end{equation*}
We begin with some generalities on stabilization functors associated to recollements:
\begin{definition}[\protect{see \cite[\S 5]{Krause_StableDerived}}]\label{def_stabilizationfunctor}
Given a recollement $\calT\p\rec\calT\rec\calT\pp$ of triangulated categories, the functors $I_\rho Q_\lambda:
\calT\pp\to\calT\p$ resp. $I_\lambda Q_\rho: \calT\pp\to\calT\p$ are called the \emph{left resp. right stabilization
  functors} associated to the recollement.
\end{definition}
\begin{fact}\label{fact_stabiso}
For any $X\pp\in\calT\pp$ there is a non-canonical isomorphism
$$I_\lambda Q_\rho X\pp\ \cong\ \Sigma I_\rho Q_\lambda X\pp.$$
\end{fact}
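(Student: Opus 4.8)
The plan is to exhibit the isomorphism by computing, in two ways, the cone in $\calT$ of a single canonical comparison morphism $Q_\lambda X\pp\to Q_\rho X\pp$. First I would assemble the standard package of facts about the recollement $\calT\p\rec\calT\rec\calT\pp$: the functor $I$ is fully faithful (so $I_\lambda I\cong\id\cong I_\rho I$), the functors $Q_\lambda$ and $Q_\rho$ are fully faithful (so $QQ_\lambda\cong\id\cong QQ_\rho$), one has $QI=0$, hence $I_\lambda Q_\lambda=0$ and $I_\rho Q_\rho=0$ by adjunction, and the kernel of $Q$ coincides with the essential image of $I$. Using full faithfulness of $Q_\lambda$ and $Q_\rho$ together with the two adjunctions, one gets natural isomorphisms $\Hom_{\calT}(Q_\lambda X\pp,Q_\rho X\pp)\cong\Hom_{\calT\pp}(QQ_\lambda X\pp,X\pp)\cong\Hom_{\calT\pp}(X\pp,X\pp)$; let $\gamma\colon Q_\lambda X\pp\to Q_\rho X\pp$ be the morphism corresponding to $\id_{X\pp}$. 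A short chase with the triangle identities for $Q_\lambda\dashv Q\dashv Q_\rho$ shows that $Q(\gamma)$ is the composite of the relevant unit and counit isomorphisms, hence invertible.

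Next, complete $\gamma$ to a triangle $Q_\lambda X\pp\xrightarrow{\ \gamma\ }Q_\rho X\pp\to C\to\Sigma Q_\lambda X\pp$ in $\calT$. Since $Q(\gamma)$ is an isomorphism we get $QC=0$, so $C$ lies in the essential image of $I$; write $C\cong ID$ with $D\in\calT\p$, the object $D$ being determined up to a non-canonical isomorphism because $I$ is fully faithful. Applying the triangulated functor $I_\lambda$ and using $I_\lambda Q_\lambda X\pp=0$ and $I_\lambda ID\cong D$, the resulting triangle collapses to $0\to I_\lambda Q_\rho X\pp\to D\to 0$, whence $I_\lambda Q_\rho X\pp\cong D$. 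Applying instead $I_\rho$ and using $I_\rho Q_\rho X\pp=0$ and $I_\rho ID\cong D$, it collapses to $I_\rho Q_\lambda X\pp\to 0\to D\to\Sigma I_\rho Q_\lambda X\pp$, whence $D\cong\Sigma I_\rho Q_\lambda X\pp$. Combining the two identifications gives $I_\lambda Q_\rho X\pp\cong\Sigma I_\rho Q_\lambda X\pp$, and the non-canonicity in the statement is exactly the non-canonicity of the cone $C$.

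The calculations are all routine; the one step needing genuine care is the verification in the first paragraph that the comparison map $\gamma$ becomes invertible after applying $Q$, which is a bookkeeping exercise with the triangle identities of the adjunctions $Q_\lambda\dashv Q\dashv Q_\rho$ (one must work with the correct normalizations of the unit and counit isomorphisms). An equivalent route, which avoids naming $\gamma$ at all, is to apply the two gluing triangles of the recollement to $X=Q_\lambda X\pp$ and to $X=Q_\rho X\pp$ respectively; using $QQ_\lambda\cong\id\cong QQ_\rho$ and $I_\lambda Q_\lambda=0=I_\rho Q_\rho$ each degenerates to a triangle whose connecting morphism is $\gamma$, and comparing the remaining terms yields the same conclusion. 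This is in essence the argument of \cite[\S 5]{Krause_StableDerived}.
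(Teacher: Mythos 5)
Your argument is correct and is essentially the paper's: the paper simply invokes the gluing triangle $Q_\lambda Q X\to X\to II_\lambda X\to\Sigma Q_\lambda QX$ for $X=Q_\rho X\pp$ (whose first map is your comparison morphism $\gamma$ and whose third term is thus already identified as $II_\lambda Q_\rho X\pp$) and then applies $I_\rho$ once, whereas you build the triangle from $\gamma$ by hand, identify its cone abstractly as $ID$, and pin down $D$ by applying both $I_\lambda$ and $I_\rho$ --- the same computation with slightly more bookkeeping. The alternative route you sketch in your last paragraph, via the gluing triangles themselves, is the paper's proof almost verbatim.
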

\begin{proof}
For $X\in\calT$ the localization sequence $\calT\p\rightleftarrows\calT\rightleftarrows\calT\pp$ induces a
non-canonical distinguished triangle $Q_\lambda Q X\to X\to I I_\lambda X\to \Sigma Q_\lambda Q X$, which for
$X= Q_\rho X\pp$ with $X\pp\in\calT\pp$ transforms into $Q_\lambda X\pp\to Q_\rho X\pp\to
I I_\lambda Q_\rho X\pp\to \Sigma Q_\lambda X\pp$. Applying $I_\rho$ from the left annihilates $Q_\rho X$ and
hence yields an isomorphism $I_\lambda Q_\rho X\pp\cong\Sigma I_\rho Q_\lambda X\pp$.
\end{proof}
Generalizing the inclusions $\Acyc^-\subset\calW\uctr\subset\Acyc\supset\calW\uco\supset\Acyc^+$, we have:
\begin{fact}\label{fact_coctrgeneral}
The model structures \eqref{eq:coctrgeneral} have the following properties:
\begin{enumerate}
\item The class $\calW$ of weakly trivial objects in the model structure $(\dg\wt{\calC},?,\dw\calW\cap\calF)$ satisfies $\Acyc^+(\scA)\subseteq\calW\subseteq\Acyc(\scA)$.
\item The class $\calW$ of weakly trivial objects in the model structure $(\dw\calC\cap\calW,?,\dg\wt{\calF})$
  satisfies $\Acyc^-(\scA)\subseteq\calW\subseteq\Acyc(\scA)$.
\end{enumerate}
\end{fact}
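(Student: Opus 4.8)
The plan is to read off the two model structures in \eqref{eq:coctrgeneral} as the localizations of the composed localization contexts \eqref{eq:cogeneral} and \eqref{eq:ctrgeneral}, identify their (trivially) cofibrant and fibrant classes, and then treat the two inclusions $\calW\subseteq\Acyc(\scA)$ and $\Acyc^{\pm}(\scA)\subseteq\calW$ separately. By Gillespie's Theorem \ref{thm_gillespie} the two cotorsion pairs underlying $(\dg\wt{\calC},?,\dw\calW\cap\calF)$ are $(\dg\wt{\calC},(\dg\wt{\calC})^{\perp})=(\dg\wt{\calC},\wt{\calW\cap\calF})$ (using Theorem \ref{theorem_gillespiestovicek}\,(i) for $(\calC,\calW\cap\calF)$) and $({^{\perp}}[\dw\calW\cap\calF],\dw\calW\cap\calF)$, so its trivially cofibrant objects are ${^{\perp}}[\dw\calW\cap\calF]$ and its trivially fibrant objects are $\wt{\calW\cap\calF}$. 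Dually (via Theorem \ref{theorem_gillespiestovicek}\,(i),(ii) for $(\calC\cap\calW,\calF)$), $(\dw\calC\cap\calW,?,\dg\wt{\calF})$ has trivially cofibrant objects $\wt{\calC\cap\calW}$ and trivially fibrant objects $[\dw\calC\cap\calW]^{\perp}$.

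For the inclusions $\calW\subseteq\Acyc(\scA)$ I would show that \emph{every} weak equivalence of these two model structures is a quasi-isomorphism. A trivial cofibration is an admissible monomorphism whose cokernel is trivially cofibrant, a trivial fibration is an admissible epimorphism whose kernel is trivially fibrant, and every weak equivalence factors as a trivial cofibration followed by a trivial fibration; hence it suffices that the four classes just listed consist of acyclic complexes. Now $\wt{\calW\cap\calF}$ and $\wt{\calC\cap\calW}$ are acyclic by definition; the arrow in Figure \ref{fig_bunch} out of $(\perp,\dw\calW\cap\calF)$ into $(\wt{\calC},\dg\wt{\calW\cap\calF})$ gives ${^{\perp}}[\dw\calW\cap\calF]\subseteq\wt{\calC}\subseteq\Acyc(\scA)$; and the arrow from $(\dg\wt{\calC\cap\calW},\wt{\calF})$ into $(\dw\calC\cap\calW,\perp)$ gives $\dg\wt{\calC\cap\calW}\subseteq\dw\calC\cap\calW$, hence (taking right orthogonals and using Theorem \ref{theorem_gillespiestovicek}\,(i)) $[\dw\calC\cap\calW]^{\perp}\subseteq\wt{\calF}\subseteq\Acyc(\scA)$. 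Thus every weak equivalence is a quasi-isomorphism, and a weakly trivial object, being weakly equivalent to $0$, is quasi-isomorphic to $0$, i.e.\ acyclic.

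For the inclusion $\Acyc^{+}(\scA)\subseteq\calW$ in part (1), fix $X\in\Acyc^{+}(\scA)$, say concentrated in degrees $\geq 0$, with syzygies $\syz^{n}X$. I would build, by induction on $n$ starting from degree $0$, a short exact sequence of complexes $0\to K\to C\to X\to 0$ in which $C$ and $K$ are concentrated in degrees $\geq 0$, $C$ is acyclic with all syzygies in $\calC$, and $K$ is acyclic with all syzygies in $\calW\cap\calF$: at the $n$-th stage one forms the pullback of $\syz^{n}C\twoheadrightarrow\syz^{n}X$ along the canonical epimorphism $X_{n+1}\twoheadrightarrow\syz^{n}X$ and then takes a special precover of this pullback with respect to the complete cotorsion pair $(\calC,\calW\cap\calF)$ in $\scA$, obtaining $C_{n+1}$ together with the required chain map to $X$; that $\syz^{n+1}C\in\calC$ follows because $\calC$ is resolving, and $\syz^{n+1}K\in\calW\cap\calF$ is precisely the kernel produced by the precover. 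Then $C\in\Ch^{+}(\scA)\cap\wt{\calC}\subseteq{^{\perp}}[\dw\calW\cap\calF]$ by Proposition \ref{prop_classes}(iv), so $C$ is trivially cofibrant, while $K\in\wt{\calW\cap\calF}$ is trivially fibrant; both are weakly trivial, and since the weakly trivial objects of an abelian model structure are closed under retracts and two-out-of-three for short exact sequences, $X$ is weakly trivial. The inclusion $\Acyc^{-}(\scA)\subseteq\calW$ in part (2) is dual: for $X\in\Acyc^{-}(\scA)$ one builds, working downward from the top degree, an admissible monomorphism $0\to X\to F\to C\to 0$ with $F\in\Ch^{-}(\scA)\cap\wt{\calF}$ and $C\in\Ch^{-}(\scA)\cap\wt{\calC\cap\calW}$, using special preenvelopes for $(\calC\cap\calW,\calF)$; here $F$ is trivially fibrant by the second half of Proposition \ref{prop_classes}(iv) and $C$ is trivially cofibrant, so again $X$ is weakly trivial.

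I expect the only genuine work to be the degree-by-degree resolution in the last paragraph: one has to verify that the inductively constructed $C$ really does admit a chain map onto $X$, that its \emph{syzygies} (not merely its entries) land in $\calC$, and that boundedness below (resp.\ above) is preserved throughout—all of which go through by the bookkeeping with pullbacks, special precovers and the resolving property of $\calC$ indicated above. Everything else is formal manipulation of the cotorsion-pair data in Figure \ref{fig_bunch} together with Theorems \ref{thm_gillespie} and \ref{theorem_gillespiestovicek} and Proposition \ref{prop_classes}.
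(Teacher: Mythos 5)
Your identification of the two Hovey triples (trivially cofibrant objects ${}^{\perp}[\dw\calW\cap\calF]$ resp.\ $\wt{\calC\cap\calW}$, trivially fibrant objects $\wt{\calW\cap\calF}$ resp.\ $[\dw\calC\cap\calW]^{\perp}$) and your argument for $\calW\subseteq\Acyc(\scA)$ are correct and essentially the paper's. The gap is in the inclusions $\Acyc^{\pm}(\scA)\subseteq\calW$: you have the two boundedness directions crossed, and this is not a notational quibble. In the paper's usage (forced by Proposition \ref{prop_classes}(iv) and by the classical inclusion $\calW\uco\supseteq\Acyc^+$ that part (1) generalizes), $\Acyc^{+}$ consists of acyclic complexes bounded in the \emph{coresolution} direction: at the closed end the entry injects into the next one and the extremal syzygy vanishes. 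Your construction for (1) instead presumes a complex closed at the end \emph{receiving} the canonical epimorphisms $X_{n+1}\twoheadrightarrow\syz^{n}X$, i.e.\ a complex bounded in the resolution direction ($\Acyc^{-}$ in the paper's sense); only there does your pullback-plus-special-precover induction have a starting point. If you run it there, it does work (your verification that $\syz^{n+1}C\in\calC$ via resolvingness and $\syz^{n+1}K\in\calW\cap\calF$ is fine), but the complex $C$ it produces lies in $\Ch^{-}(\scA)\cap\wt{\calC}$, and Proposition \ref{prop_classes}(iv) only places $\Ch^{+}(\scA)\cap\wt{\calC}$ inside ${}^{\perp}[\dw\calW\cap\calF]$; the inclusion $\Ch^{-}(\scA)\cap\wt{\calC}\subseteq{}^{\perp}[\dw\calW\cap\calF]$ is false in general (it would give $\Acyc^{-}\subseteq\calW$ for the coderived-type structure and hence, by writing any acyclic complex as a telescope of its bounded-above canonical truncations, $\Acyc(\scA)\subseteq\calW$, collapsing the coderived category onto $\bfD(\scA)$). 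So your $C$ is merely cofibrant, not weakly trivial, and from $0\to K\to C\to X\to 0$ thickness only yields $X\cong C$ in the homotopy category, not $X\in\calW$. Dually, in (2) the preenvelope construction ``working downward from the top degree'' cannot be run for a complex unbounded in the downward direction by plain pushout-plus-preenvelope steps: to control simultaneously the new syzygy of $F$ and the syzygy-level cokernel one has to lift an extension class along $\ext^{1}(\syz^{n}F,-)\to\ext^{1}(\syz^{n}X,-)$, using $\ext^{2}$-vanishing from hereditarity, which your sketch does not supply.

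The repair is to swap the two shapes, which is exactly what the paper does. For (1), with $X\in\Acyc^{+}$, build a \emph{monomorphism} $0\to X\to F\to C\to 0$ by pushout and special preenvelope for $(\calC,\calW\cap\calF)$ applied to the sequences $0\to\syz^{n}X\to X^{n}\to\syz^{n+1}X\to 0$ (the Resolution Lemma of \cite{Becker_ModelsForSingularityCategories}); this runs from the closed end of an $\Acyc^{+}$ complex, gives $F\in\wt{\calW\cap\calF}\cap\Ch^{+}$ (trivially fibrant) and $C\in\wt{\calC}\cap\Ch^{+}\subseteq{}^{\perp}[\dw\calW\cap\calF]$ (trivially cofibrant), and then thickness, or Gillespie's characterization, gives $X\in\calW$. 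For (2), your pullback-plus-special-precover scheme is precisely the right one, but it must be performed with the cotorsion pair $(\calC\cap\calW,\calF)$ rather than $(\calC,\calW\cap\calF)$: for $X\in\Acyc^{-}$ it yields $0\to K\to C\to X\to 0$ with $C\in\wt{\calC\cap\calW}$ (trivially cofibrant with no boundedness needed, since $\calC\cap\calW$ is resolving) and $K\in\wt{\calF}\cap\Ch^{-}\subseteq[\dw\calC\cap\calW]^{\perp}$ (trivially fibrant by the second half of Proposition \ref{prop_classes}(iv)).
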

\begin{proof}
Recall that the model structure $(\dg\wt{\calC},?,\dw\calW\cap\calF)$ arises as the localization of the composed
localization context in \eqref{eq:cogeneral}. Therefore, by Gillespie's Theorem \ref{thm_gillespie}, its class $\calW$ of weakly
trivial complexes consists of those $X\in\Ch(\scA)$ which admit a short exact sequence $0\to X\to F\to C\to 0$ with
$F\in\wt{\calW\cap\calF}$ and $C\in {^{\perp}}[\dw\calW\cap\calF]$. Now $\wt{\calW\cap\calF}\subseteq\Acyc(\scA)$
 by definition, and ${^{\perp}}[\dw\calW\cap\calF]\subseteq\Acyc(\scA)$ as witnessed by the upper right dashdotted arrow
in Figure \ref{fig_bunch}, so $\calW\subseteq\Acyc(\scA)$ by the $2$-out-of-$3$ property of $\Acyc(\scA)$. Conversely, suppose
$X\in\Acyc^+(\scA)$. Applying \cite[Resolution Lemma 1.4.4]{Becker_ModelsForSingularityCategories} to the cotorsion pair
$(\calC,\calW\cap\calF)$ and all short exact sequences $0\to \syz^n X\to X^n\to \syz^{n+1} X\to 0$ we can construct a short
exact sequence $0\to X\to F\to C\to 0$ in $\Ch(\scA)$ with $F\in\wt{\calW\cap\calF}\cap\Ch^+(\scA)$ and
$C\in\wt{\calC}\cap\Ch^+(\scA)$. Since $\wt{\calC}\cap\Ch^+(\scA)\subseteq {^{\perp}}[\dw\calW\cap\calF]$, it follows
that $X\in\calW$. This finishes the proof of statement (i), and (ii) is analogous.
\end{proof}

\begin{proof}[Proof of Theorem \ref{thm_leftrightstabilization}]
\textit{Step 1:} Firstly, we note that the derived functor
\begin{align}\label{eq:leftadjoint}
Q_\lambda:
\Ho(\dg\wt{\calC\cap\calW},\Acyc(\scA),\dg\wt{\calF})\to\Ho(\dg\wt{\calC}\cap\dw\calC\cap\calW,?,\dg\wt{\calF})
\end{align} may be computed naively on
$\Ch^-(\scA)$: Namely, it can be computed through any resolution by a quasi-isomorphic bounded above complex with entries in
$\calC\cap\calW$, and by Fact \ref{fact_coctrgeneral} such a resolution is still a weak equivalence in
$(\dw\calC\cap\calW,?,\dg\wt{\calF})$, hence a fortiori also in $(\dg\wt{\calC}\cap\dw\calC\cap\calW,?,\dg\wt{\calF})$.

\textit{Step 2:} We claim that the stabilization functor in question annihilates all $X\in\calW$. For that, step 1
and the exactness (Proposition \ref{prop_locconcomp}) of the sequence of functors
\begin{equation*}\begin{tikzpicture}[description/.style={fill=white,inner sep=2pt}]
    \matrix (m) [matrix of math nodes, row sep=1.5em,
                 column sep=2.5em, text height=1.5ex, text depth=0.25ex,
                 inner sep=0pt, nodes={inner xsep=0.3333em, inner ysep=0.3333em}]
    {
\Ho(\dg\wt{\calC}\cap\dw\calC\cap\calW,\Acyc(\scA),[\wt{\calC}\cap\dw\calC\cap\calW]^{\perp})\\
\Ho(\dg\wt{\calC}\cap\dw\calC\cap\calW,?,\dg\wt{\calF})\\
\Ho(\wt{\calC}\cap\dw\calC\cap\calW,?,\wt{\calF})       \\
    };
    \draw[->,shorten <=0.5mm, shorten >=0.5mm] (m-1-1) -- node[scale=0.75,right]{$\bfR\id$}  (m-2-1);
    \draw[->,shorten <=0.5mm, shorten >=0.5mm] (m-2-1) -- node[scale=0.75,right]{$\bfR\id$}  (m-3-1);
\end{tikzpicture}\end{equation*}
show that it suffices to prove that any $X\in\calW\subset\Ch(\scA)$ belongs to
$[\wt{\calC}\cap\dw\calC\cap\calW]^{\perp}$ up to weak equivalence in
$(\dg\wt{\calC}\cap\dw\calC\cap\calW,?,\dg\wt{\calF})$. Now, the presence of enough injectives with respect to
$(\calC\cap\calW,\calF)$ shows that $X$ admits a resolution $\iota: X\rightarrowtail F$ with $F\in\Ch^{\geq 0}(\calF)$ such that
$\syz^k F\in \calC\cap\calW$ for $k>0$. The thickness of $\calW$ then implies that even $F\in\Ch^{\geq
  0}(\calW\cap\calF)\subset\dg\wt{\calW\cap\calF}$, and moreover $Z := \coker(\iota)\in\wt{\calC\cap\calW}$. Since
$\wt{\calC\cap\calW}$ are the trivially cofibrant objects in
$(\dg\wt{\calC}\cap\dw\calC\cap\calW,?,\dg\wt{\calF})$ and
$\dg\wt{\calW\cap\calF}\subseteq[\wt{\calC}\cap\dw\calC\cap\calW]^{\perp}$ as witnessed by the right vertical arrows in
Figure \ref{fig_chainbutterfly}, it follows that $X\cong F\in [\wt{\calC}\cap\dw\calC\cap\calW]^{\perp}$ in
$\Ho(\dg\wt{\calC}\cap\dw\calC\cap\calW,?,\dg\wt{\calF})$ as claimed.

\textit{Step 3:} Since any $X\in\scA$ admits a functorial resolution of the form $0\to X\to F\to C\to 0$ with $F\in\calF$ and
$C\in\calC\cap\calW$, the second step shows that it suffices to prove the commutativity of
\eqref{eq:realizationsolution} when restricted to the fibrant objects $\calF\subseteq\scA$. In this case,
by definition as well as step 1, both the left stabilization $$\Ho(\dg\wt{\calC\cap\calW},\Acyc(\scA),\dg\wt{\calF})\longrightarrow\Ho(\wt{\calC}\cap\dw\calC\cap\calW,?,\dg\wt{\calF})$$
and the functor
$$\bfR\iota^0: \Ho(\calM)\to \Ho(\wt{\calC}\cap\dw\calC\cap\calW,?,\dg\wt{\calF})$$
can be computed naively, and the commutativity of \eqref{eq:realizationsolution} follows.
\end{proof}

Dually, the composition of the right stabilization functor associated to the upper butterfly in Figure
\ref{fig_chainbutterfly} and the equivalence $\bfR \syz^0$ from \ref{eq:frobeniuslift2} in Theorem \ref{thm_lifttocomplexes} gives
another functor $\bfD(\scA)\to\Ho(\calM)$ making \eqref{eq:realizationsolution} commutative. Comparing the two functors
$\bfD(\scA)\to\Ho(\calM)$ obtained this way, Fact \ref{fact_stabiso} and $\bfL
\cosyz^0\cong\Sigma\bfR \syz^0$ show that they are pointwise non-canonically isomorphic; to show that they are even naturally
isomorphic we'd need to find an enhancement of Fact \ref{fact_stabiso}.

\bibliography{bibliography}{}
\bibliographystyle{halpha}

\end{document}